\numberwithin{equation}{section}
\numberwithin{figure}{section}
\theoremstyle{plain}
\newtheorem{thm}{\protect\theoremname}[section]
\theoremstyle{definition}
\theoremstyle{definition}
\theoremstyle{plain}
\theoremstyle{plain}
\newtheorem{lem}[thm]{\protect\lemmaname}
\theoremstyle{plain}
\theoremstyle{definition}
\newtheorem{exa}[thm]{\protect\examplename}
\theoremstyle{definition}
\newtheorem*{def*}{Definition}
\newtheorem*{rem*}{Remark}
\newtheorem*{exa*}{Example}
\renewenvironment{proof}[1][\proofname]{\medskip \noindent {\bfseries #1. }}{\hfill \qedsymbol\medskip}
\newcommand{\labeltext}[2]{
\@bsphack
\csname phantomsection\endcsname
\def\@currentlabel{#1}{\label{#2}}
\@esphack
}
\def\dashint{\,\ThisStyle{\ensurestackMath{%
  \stackinset{c}{.2\LMpt}{c}{.5\LMpt}{\SavedStyle-}{\SavedStyle\phantom{\int}}}%
  \setbox0=\hbox{$\SavedStyle\int\,$}\kern-\wd0}\int}
\DeclareRobustCommand{\SkipTocEntry}[5]{}
\newcommand{\mR}{\mathbb{R}}   
\newcommand{\mN}{\mathbb{N}}   
\newcommand{\mK}{\mathbb{K}}   
\newcommand{\mS}{\mathbb{S}}   
\newcommand{\abs}[1]{\lvert #1 \rvert}  
\newcommand{\norm}[1]{\lVert #1 \rVert}  
\newcommand{\br}[1]{\langle #1 \rangle}  
\newcommand{\mD}{\mathscr{D}}
\newcommand{\mE}{\mathscr{E}}
\newcommand{\mF}{\mathscr{F}}
\newcommand{\mJ}{\mathcal{J}}
\newcommand{\calE}{\mathcal{E}}
\newcommand{\calS}{\mathcal{S}}
\newcommand{\bfk}{\mathbf{k}}
\newcommand{\bfu}{\mathbf{u}}
\newcommand{\bfv}{\mathbf{v}}
\newcommand{\bfw}{\mathbf{w}}
\newcommand{\bfz}{\mathbf{z}}
\newcommand{\bfH}{\mathbf{H}}
\newcommand{\sfm}{\mathsf{m}}
\DeclareMathOperator{\Bal}{Bal}
\DeclareMathOperator{\supp}{supp}
\DeclareMathOperator{\curl}{curl}
\newcommand{\rmd}{\mathrm{d}}
\providecommand{\corollaryname}{Corollary}
\providecommand{\definitionname}{Definition}
\providecommand{\lemmaname}{Lemma}
\providecommand{\propositionname}{Proposition}
\providecommand{\remarkname}{Remark}
\providecommand{\theoremname}{Theorem}
\providecommand{\examplename}{Example}
\providecommand{\assumptionname}{Assumption}
\thanks{P.-Z. Kow was supported by the NCCU Office of research and development and the National Science and Technology Council of Taiwan, NSTC 112-2115-M-004-004-MY3; H. Shahgholian was supported by Swedish Research Council grant nr 2021-03700.} 
\thanks{We would like to express our sincere gratitude to the anonymous referee for their meticulous review of the manuscript and for graciously pointing out several inaccuracies and partial flaws.}
\begin{document}

\title[Multi-phase $k$-quadrature domains and applications]{Multi-phase $k$-quadrature domains and applications to acoustic waves and magnetic fields}

\begin{sloppypar}

\begin{abstract}
The primary objective of this paper is to explore the multi-phase variant of quadrature domains associated with the Helmholtz equation, commonly referred to as $k$-quadrature domains. Our investigation employs both the minimization problem approach, which delves into the segregation ground state of an energy functional, and the partial balayage procedure, drawing inspiration from the recent work by Gardiner and Sj\"odin. Furthermore, we present practical applications of these concepts in the realms of acoustic waves and magnetic fields.

\end{abstract}

\subjclass[2020]{35J05,35J15,35J20,35R30,35R35}
\keywords{quadrature domain; variational problem; partial balayage; non-scattering phenomena; Helmholtz equation; acoustic waves; magnetic fields}
\author{Pu-Zhao Kow}
\address{Department of Mathematical Sciences, National Chengchi University, No. 64, Sec. 2, ZhiNan Rd., Wenshan District, Taipei 116302, Taiwan.}
\email{pzkow@g.nccu.edu.tw}

\author{Henrik Shahgholian}
\address{Department of Mathematics, KTH Royal Institute of Technology, SE-10044 Stockholm, Sweden. }
\email{henriksh@kth.se}

\maketitle

\tableofcontents{}

\section{Introduction}

\subsection{Background}
The subject under consideration in this note is $k$-quadrature domains with $k > 0$, also known as quadrature domains for the Helmholtz operator. This topic is intricately connected to the inverse scattering theory, as detailed in references \cite{SS21NonscatteringFreeBoundary, KLSS22QuadratureDomain, KSS23Minimization}.

Given any $\mu \in \mE'(\mR^{n})$ ($n\geq 2$), we recall that a bounded open set $D \subset \mR^{n}$ is termed a (one-phase) $k$-quadrature domain with respect to $\mu$ if $\mu \in \mE'(D)$ and
\begin{equation*}
\int_{D} w(x) \ \rmd x = \br{\mu,w}
\end{equation*}
holds for all $w \in L^{1}(D)$ satisfying $(\Delta + k^{2})w=0$ in $D$. It is shown  in \cite[Proposition~2.1]{KLSS22QuadratureDomain} that a bounded open set $D \subset \mR^{n}$ is a $k$-quadrature domain for $\mu \in \mE'(D)$ if and only if there exists $\tilde{u}\in\mD'(\mR^{n})$ satisfying the following equations
\begin{equation}
\left\{\begin{aligned}
& (\Delta + k^{2})\tilde{u} = \chi_{D} - \mu && \text{in $\mR^{n}$,} \\
& \tilde{u} = \abs{\nabla \tilde{u}}=0 && \text{in $\mR^{n}\setminus D$.}
\end{aligned}\right. \label{eq:k-QD-1}
\end{equation}

Later in \cite{KSS23Minimization}, the system of equations \eqref{eq:k-QD-1} was further generalized by introducing the following Bernoulli-type free boundary problem:
\begin{equation}
\left\{
\begin{aligned}
& (\Delta + k^{2})\tilde{u} = h - \mu && \text{in $D$,} \
& \tilde{u} = 0 && \text{in $\mR^{n}\setminus \partial D$,} \
& \abs{\nabla\tilde{u}} = g && \text{in $\mR^{n}\setminus \partial D$,}
\end{aligned}\right. \label{eq:k-hybrid-QD-2}
\end{equation}
where the Bernoulli condition $\abs{\nabla\tilde{u}} = g$ is considered in a very weak sense. Refer also to  \cite{KSS23Anisotropic} for a connection between the anisotropic non-scattering problem and the Bernoulli-type free boundary problem. We refer to a bounded domain $D$ in \eqref{eq:k-hybrid-QD-2} as the hybrid $k$-quadrature domain.

\subsection{Two- and multiphase \texorpdfstring{$k$}{k}-quadrature domains (the notion)}

A bounded domain $D$ in $\mR^{n}$ is referred to as a quadrature domain for harmonic functions, associated with a distribution $\mu \in \mE'(D)$ if
\begin{equation}
\int_{D} h(x) \  \rmd x = \int h(x) \  \rmd \mu(x) \label{eq:quadrature-identity-classical}
\end{equation}
holds for every harmonic function $h \in L^{1}(D)$; see, for example, the monograph \cite{Sak82QuadratureDomains}. In the special case when $\mu = \sum_{j=1}^{m} \lambda_{j}\delta_{x_{j}}$, where $\delta_{a}$ is the Dirac measure at $a$, \eqref{eq:quadrature-identity-classical} reduces to a quadrature identity for computing integrals of harmonic functions; refer to \cite{GS05QuadratureDomain}. Quadrature domains can also be regarded as a generalization of the mean value theorem for harmonic functions: $B_{r}(a)$ is a quadrature domain with $\mu = \abs{B_{r}(a)}\delta_{a}$. Various examples can be constructed using complex analysis; see, for instance, \cite{Dav74SchwartzFunction,GS05QuadratureDomain,Sak83BalayageQuadrature} for further background.

A generalization of the Helmholtz operator was investigated in \cite{KLSS22QuadratureDomain,GS24PartialBalayageHelmholtz}. For $k > 0$, a bounded open set $D$ in $\mR^{n}$ (not necessarily connected) is referred to as a quadrature domain for $(\Delta + k^{2})$, or a $k$-quadrature domain, associated with a distribution $\mu \in \mE'(D)$, if
\begin{equation}
\int_{D} w(x) \  \rmd x = \int w(x) \  \rmd \mu(x) \label{eq:k-QD-def}
\end{equation}
holds for all $w \in L^{1}(D)$ satisfying $(\Delta + k^{2})w=0$ in $D$. It is essential to note that the $k$-quadrature domain can also be regarded as a generalization of the mean value theorem: For each $k > 0$, $B_{r}(a)$ is a $k$-quadrature domain with $\mu = c_{k,k,r}^{\rm MVT} \delta_{a}$ for some suitable constant $c_{n,k,r}^{\rm MVT}$ (which may be zero for specific parameters $n$, $k$, and $r$). Similar to the classical case ($k=0$), various examples can also be constructed using complex analysis \cite{KLSS22QuadratureDomain}.

We now introduce the concept of two-phase quadrature domains as defined in \cite{EPS11TwoPhaseQD}, and later in \cite{GS12TwoPhaseQD}.

Let $D_{\pm}$ be disjoint bounded open subsets of $\mR^{n}$, and let $\mu_{\pm} \in \mE'(D_{\pm})$, respectively. If a pair $(D_{+}, D_{-})$ has the property that
\begin{equation}
\int_{D_{+}} h(x) \  \rmd x - \int_{D_{-}} h(x) \  \rmd x = \int h(x) \  \rmd (\mu_{+} - \mu_{-}) \label{eq:quadrature-identity-classical-2-phase}
\end{equation}
holds for every harmonic function $h$ on $D_{+}\cup D_{-}$ with $h \in C(\overline{D_{+}\cup D_{-}})$, then we designate such a pair $(D_{+}, D_{-})$ as a \emph{two-phase quadrature domain} (for harmonic functions) corresponding to distributions $(\mu_{+}, \mu_{-}) \in \mE'(D_{+})\times \mE'(D_{-})$. The precise meaning in the right hand side of \eqref{eq:quadrature-identity-classical-2-phase} is the distributional pairing $\br{\mu_{+},h} - \br{\mu_{-},h}$, which is well-defined since $h \in C^{\infty}(D_{\pm})$. The following trivial example also illustrates this notion:

\begin{exa*} 
It is evident that if $D_{\pm}$ are quadrature domains (for harmonic functions) corresponding to distributions $\mu\in\mE'(D_{\pm})$ respectively in the sense of \eqref{eq:quadrature-identity-classical} and satisfy $\overline{D_{+}} \cap \overline{D_{-}} = \emptyset$, then such a pair $(D_{+},D_{-})$ clearly satisfies \eqref{eq:quadrature-identity-classical-2-phase}. 
\end{exa*}
Here, we refer to \cite{EPS11TwoPhaseQD,GS12TwoPhaseQD} for some less trivial examples of two-phase quadrature domains \eqref{eq:quadrature-identity-classical-2-phase}. It is worth mentioning that Gardiner and Sj\"odin \cite[Theorem~3.1(b)]{GS12TwoPhaseQD} proved that if $(D_{+},D_{-})$ is a two-phase quadrature domain (for harmonic functions) corresponding to distributions $(\mu_{+},\mu_{-}) \in \mE'(D_{+})\times \mE'(D_{-})$, then there exist polar sets $Z_{\pm}$ such that there exists $\tilde{u}$ such that
\begin{equation*}
\Delta \tilde{u} = (1 - \mu_{+})\chi_{D_{+}\cup Z_{+}} - (1 - \mu_{-})\chi_{D_{-}\cup Z_{-}} \text{ in $\mR^{n}$} ,\quad \tilde{u} = 0 \text{ outside $D_{+}\cup Z_{+} \cup D_{-} \cup Z_{-}$.}
\end{equation*}
Conversely, if a function  $\tilde{u} \in H^{1}(\mR^{n})$ with compact support satisfies\footnote{We define the sets $\{v>0\} := \mR^{n}\setminus \{v \le 0\}$ and $\{v<0\} := \mR^{n}\setminus \{v \ge 0\}$. In the case when $v \in H^{1}(\mR^{n})$ \emph{has compact support}, the inequalities $v \ge 0$ and $v \le 0$ also can be understood in \emph{almost-every pointwise} (a.e.) sense, see \cite[Definition~II.5.1 and Proposition~5.2]{KS00IntroductionVariationalInequalities}.} 
\begin{equation*}
\Delta \tilde{u} = (1 - \mu_{+})\chi_{\{\tilde{u}>0\}} - (1 - \mu_{-})\chi_{\{\tilde{u}<0\}} \text{ in $\mR^{n}$,}
\end{equation*}
and if $\supp\,(\mu_{\pm}) \subset \{\pm\tilde{u}>0\}$, by \cite[Theorem~3.1(a)]{GS12TwoPhaseQD} we see that $\left(\{\tilde{u}>0\},\{\tilde{u}<0\}\right)$ a two-phase quadrature domain (for harmonic functions) corresponding to distributions $(\mu_{+},\mu_{-}) \in \mE'(D_{+})\times \mE'(D_{-})$. 

In \cite{EPS11TwoPhaseQD}, they use a minimization approach to study the model equation 
\begin{equation}
\Delta \tilde{u} = (\lambda_{+} - \mu_{+})\chi_{\{\tilde{u}>0\}} - (\lambda_{-} - \mu_{-})\chi_{\{\tilde{u}<0\}} \text{ in $\mR^{n}$.} \label{eq:classical-2-phase}
\end{equation}
The above discussions strongly suggest  studying  the following model equation, which is also the main theme of this paper: 

\begin{equation}
\Delta \tilde{u} + k_{+}^{2}\tilde{u}_{+} - k_{-}^{2}\tilde{u}_{-} = (\lambda_{+} - \mu_{+})\chi_{\{\tilde{u}>0\}} - (\lambda_{-} - \mu_{-})\chi_{\{\tilde{u}<0\}} \quad \text{in $\mR^{n}$,} \label{eq:main-2-phase}
\end{equation} 
where $k_{\pm}\ge 0$, $\lambda_{\pm} > 0$ and $\mu_{\pm} \in \mE'(\mR^{n})$. We refer to such pair of domains $(D_{+},D_{-})$ with $D_{\pm} = \{\pm \, \tilde{u}>0\}$ as  the \emph{two-phase $(k_{+},k_{-})$-quadrature domain}. 

In \cite{AS16MultiPhaseQD},  the following problem in terms of partial differential equations was considered:
Given $m$ positive measures $\mu_{i}$ and constants $\lambda_{i}$, for $i=1,\cdots,m$,  find functions $u_{i} \ge 0$ with suitable regularity and disjoint sets  $D_{i}=\{u_{i}>0\}$ such that 
\begin{equation}
\Delta(u_{i}-u_{j}) = (\lambda_{i}-\mu_{i})\chi_{D_{i}} - (\lambda_{j}-\mu_{j})\chi_{D_{j}} \text{ in $\mR^{n}\setminus \bigcup_{\ell\neq i,j} \overline{D_{\ell}}$.} \label{eq:classical-local-2-phase}
\end{equation}
It is easy to see that \eqref{eq:classical-2-phase} is simply a special case of \eqref{eq:classical-local-2-phase} for $m=2$. We remark that \eqref{eq:classical-local-2-phase} is locally a two-phase problem in the set $\mR^{n}\setminus \bigcup_{\ell\neq i,j} \overline{D_{\ell}}$, which excludes all points in $\partial D_{i}\cap \partial D_{j}$, therefore it is not easy to establish quadrature identities similar to \eqref{eq:quadrature-identity-classical-2-phase} for multi-phase case. One can think about the \emph{Lakes of Wada}, which are three disjoint connected open sets of the plane or open unit square with the counter-intuitive property that they all have the same boundary. Indeed, one can also construct a countable infinite number of disjoint connected open sets of the plane with the same boundary.

According to \cite{AS16MultiPhaseQD}, inspired by the segregation problem \cite{CTV05SegregationProblem}, they minimize some suitable energy functional so that the minimizer satisfies \eqref{eq:classical-local-2-phase}. In other words, the supports of densities $u_{i}$ have to satisfy a suitable optimal partition problem in $\mR^{n}$.

Not only the multi-phase problem which we considered, there are also some other type of segregation problem, for example \cite{CKL2009}:
they minimize the Dirichlet functional (say)
\begin{equation*}
\mathcal{D}_\epsilon (\bfu) := \sum_{i=1}^{m} \int_{\Omega} \abs{\nabla u_{i}(x)}^{2} \, \rmd x + f_\epsilon (u) 
\end{equation*}
where $f_\epsilon (u) $ is chosen such that the functional gets huge penalties, say $1/\epsilon$, 
on the set $\left\{u_i > 0\right\} \cap \left\{ u_j > 0\right\}$, and the limit of this functional leads to a segregation of the supports of the components.
The work \cite{AS16MultiPhaseQD} or the equation \eqref{eq:classical-local-2-phase} strongly suggests studying the following model equation: We want to find functions $u_{i} \ge 0$ with disjoint positivity sets $D_{i}=\{u_{i}>0\}$ such that
\begin{equation*}
\Delta (u_{i}-u_{j}) + k_{i}^{2} u_{i} - k_{j}^{2} u_{j} = (\lambda_{i}-\mu_{i})\chi_{D_{i}} - (\lambda_{j}-\mu_{j})\chi_{D_{j}} \text{ in $\Omega \setminus \bigcup_{\ell\neq i,j} \overline{D_{\ell}}$}
\end{equation*}
where $k_{i} \ge 0$, $\lambda_{i} \ge 0$ and $\mu_{i} \in \mE'(\mR^{n})$, for some open domain $\Omega$. We refer to such  $k$-tuple of domains $(D_{1},\cdots,D_{m})$ as the \emph{multi-phase $(k_{1},\cdots,k_{m})$-quadrature domain} (in $\Omega$).

\subsection{Applications: acoustic waves and magnetic fields} 

\subsubsection{Inverse scattering in acoustic waves}
To provide motivation for this study, we initially establish a connection between the two-phase problem \eqref{eq:main-2-phase} and the inverse scattering problem for acoustic waves. We consider the acoustic scattering problem governed by the wave equation $c(x)^{-2}\partial_{t}^{2}U - \Delta U=0$, where $c$ is the velocity of sound in the given medium. The acoustic wave with a fixed frequency (wave number) $k_{0}>0$ corresponds to solutions of the form $U(x,t)=e^{\mathbf{i}k_{0}t}u_{\tilde{\rho}}(x)$, where the total field $u_{\tilde{\rho}}$ satisfies the (inhomogeneous) Helmholtz equation
\begin{equation}
\Delta u_{\tilde{\rho}} + k_{0}^{2}\tilde{\rho}(x)u_{\tilde{\rho}}=0 \quad \text{in $\mR^{n}$} \label{eq:Helmholtz-model}
\end{equation}
where (for simplicity) we have set $\tilde{\rho} = \tilde{\rho} (x)  = c(x)^{-2} $. Later, we will also explain that \eqref{eq:Helmholtz-model} models the cylindrical magnetic field; refer to Section~\ref{sec:Magnetic-field} below.

If we expose the medium with an incoming wave $u_{0}$ that solves
\begin{equation}
(\Delta + k_{0}^{2})u_{0} = 0 \quad \text{in $\mR^{n}$,} \label{eq:incident-field-u0}
\end{equation}
then the total field $u_{\tilde{\rho}}$, which verifies \eqref{eq:Helmholtz-model}, has the form $u_{\tilde{\rho}} = u_{0}+u^{\rm sc}$ for some scattered wave $u^{\rm sc}$, which is outgoing. Classical scattering theory \cite{CCH23InverseScatteringTransmission,CK19scattering,KG08Factorization} guarantees the existence and uniqueness of such an outgoing scattered field $u^{\rm sc}\in H_{\rm loc}^{1}(\mR^{n})$.

In order to define non-scattering phenomenon, we need to recall some background in the topic. Indeed, we first recall that 
a solution $v$ of $(\Delta + k_{0}^{2})v=0$ in $\mR^{n}\setminus B_{R}$ $(\text{for some $R>0$})$ is outgoing if it satisfies the Sommerfeld radiation condition 
\begin{equation}\label{sommerfeld}
\lim_{\abs{x}\rightarrow\infty} \abs{x}^{\frac{n-1}{2}} (\partial_{\abs{x}}v - \mathbf{i}k_{0}v)=0 \quad \text{uniformly in all directions $\hat{x}=\frac{x}{\abs{x}} \in \mathcal{S}^{n-1}$,}
\end{equation}
where $\partial_{\abs{x}}=\hat{x}\cdot\nabla$ denotes the radial derivative. In this case, the far-field pattern $v^{\infty}$ of $v$ is defined by 
\begin{equation*}
v^{\infty}(\hat{x}) := \lim_{\abs{x}\rightarrow\infty} \gamma_{n,k_{0}}^{-1} \abs{x}^{\frac{n-1}{2}} e^{-\mathbf{i}k_{0}\abs{x}}v(x) \quad \text{for all $\hat{x}\in\mathcal{S}^{n-1}$}
\end{equation*}
for some normalizing constant $\gamma_{n,k_{0}}\neq 0$. The Rellich uniqueness theorem \cite{CK19scattering,Hormander_rellich} implies that $v^{\infty}\equiv 0$ if and only if $v=0$ in $\mR^{n}\setminus B_{R}$. 

To formulate our  theorem, regarding applications to acoustic waves, we need the following definition.

\begin{def*}[Non-scattering]
Consider two acoustic-penetrable obstacles (medium) $(D_{\pm},\rho_{\pm})$ such that $D_{+}\cap D_{-}=\emptyset$ with refraction indices (the light bending ability of that medium) $\rho_{\pm} \in L^{\infty}(D_{\pm})$.
We call $\rho_{\pm} \in L^{\infty}(D_{\pm})$ \emph{signed contrasts} if $\rho_{\pm} \ge c > 0$ near $\partial D_{\pm}$, respectively. For each fixed wave number $k_{0}>0$, we illuminate the obstacles $(D_{\pm},\rho_{\pm})$ using the incident field $u_{0}$ as in \eqref{eq:incident-field-u0}, producing a unique total field $u_{\rho_{\pm}} = u_{0} + u_{\rm sc}$ (see \eqref{eq:Helmholtz-model}) satisfying
\begin{equation}
(\Delta + k_{0}^{2} + \rho_{+}\chi_{D_{+}} - \rho_{-}\chi_{D_{-}})u_{\rho_{\pm}} = 0 \quad \text{in $\mR^{n}$.} \label{eq:total-field-u-rho}
\end{equation}
We say that the pair of obstacles $(D_{\pm},\rho_{\pm})$ is non-scattering with respect to the incident field $u_{0}$ if $u_{\rm sc}=0$ outside $B_{R}$ for some sufficiently large $R>0$.
\end{def*}

\begin{rem*}
If $(D_{\pm},\rho_{\pm})$ is non-scattering and $u_{0}$ is real-valued, then by taking the real and imaginary parts of \eqref{eq:total-field-u-rho}, one sees that $u_{\rho\pm}$ must be real-valued. 
\end{rem*}

\begin{thm}\label{thm:nonscattering}
Let $k_{0} \ge 0$, $k_{\pm} \ge 0$, $\lambda_{\pm} > 0$ and $\mu_{\pm} \in \mathscr{E}'(\mR^{n})$. Suppose that there exists a solution $\tilde{u} \in \mE'(\mR^{n})$ of the two-phase problem \eqref{eq:main-2-phase} with 
\begin{equation}
\supp\,(\mu_{\pm}) \subset D_{\pm} := \{\pm \tilde{u}>0\}. \label{eq:support-condition-2-phase}
\end{equation}
If there exists an incident field $u_{0}$ of $(\Delta + k_{0}^{2})u_{0}=0$ in $\mR^{n}$ such that $u_{0}<0$ on $\partial D_{+} \cup \partial D_{-}$, 
then there exist contrasts $\rho_{\pm} \in L^{\infty}(D_{\pm})$ such that the pair of obstacles $(D_{\pm},\rho_{\pm})$ is non-scattering with respect to $u_{0}$. 
\end{thm}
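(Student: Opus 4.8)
Set $D:=D_+\cup D_-$, $\partial D:=\partial D_+\cup\partial D_-$. The plan is to build the total field by hand in the form $v=u_0+w$, where $w$ is a compactly supported $C^{1,1}$ function that coincides with $\tilde u$ near the free boundary $\partial D$ but is a convenient modification of it in the interior of $D$, and then to \emph{define} the contrasts $\rho_\pm$ from the total field equation \eqref{eq:total-field-u-rho} itself. Since $\tilde u\in\mE'(\mR^n)$, the sets $D_\pm=\{\pm\tilde u>0\}$ are bounded and open and $\partial D$ is compact; by \eqref{eq:support-condition-2-phase} the compact set $\supp(\mu_+)\cup\supp(\mu_-)$ lies a positive distance from $\partial D$. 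Away from that set, \eqref{eq:main-2-phase} is a two-phase obstacle-type problem with bounded right-hand side, so I would invoke the associated $C^{1,1}$-regularity to get $\tilde u\in C^{1,1}$ on a neighbourhood of $\partial D$; in particular $\Delta\tilde u\in L^\infty$ there, $\tilde u=0$ on $\partial D$, and $\Delta\tilde u$ carries no singular part across $\partial D$.

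Next, set $\epsilon_0:=-\max_{\partial D}u_0>0$ (positive by the hypothesis $u_0<0$ on $\partial D$ and continuity of $u_0$). Since $u_0+\tilde u$ is continuous and equals $u_0$ on $\partial D$, I can choose open neighbourhoods $\mathcal N''\Subset\mathcal N'$ of $\partial D$ with $u_0+\tilde u<-\epsilon_0/2$ on $\mathcal N'\cap D$ and $\overline{\mathcal N'}$ disjoint from $\supp(\mu_\pm)$, a cutoff $\eta\in C_c^\infty(\mathcal N')$, $0\le\eta\le1$, $\eta\equiv1$ on $\mathcal N''$, and a constant $\delta>0$. Then I would put
\[
w:=\eta\,\tilde u+(1-\eta)(-u_0-\delta)\chi_D,\qquad v:=u_0+w .
\]
Thus $w=\tilde u$ on $\mathcal N''$ and $w=-u_0-\delta$ on $D\setminus\mathcal N'$, and (using that $\tilde u$ is $C^{1,1}$ wherever $\eta\ne0$ and that $\eta\equiv0$ near $\supp(\mu_\pm)$) one checks $w\in C^{1,1}(\mR^n)$ with $\supp w\subset\overline D$; hence $(\Delta+k_0^2)w\in L^\infty(\mR^n)$ with no atom on the null set $\partial D$. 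Moreover $v$ stays bounded away from $0$ on $D$: one has $v=\eta(u_0+\tilde u)-(1-\eta)\delta$ on $D$, which on $\mathcal N'\cap D$ is $\le-\min(\epsilon_0/2,\delta)$ by convexity, and equals $-\delta$ on $D\setminus\mathcal N'$. Therefore the functions
\[
\rho_+:=-\frac{(\Delta+k_0^2)w}{v}\ \text{ on }D_+,\qquad
\rho_-:=\frac{(\Delta+k_0^2)w}{v}\ \text{ on }D_-
\]
lie in $L^\infty(D_\pm)$; near $\partial D_+$, where $w=\tilde u$ and \eqref{eq:main-2-phase} gives $(\Delta+k_0^2)\tilde u=\lambda_+ +(k_0^2-k_+^2)\tilde u\to\lambda_+$ while $v\to u_0$, one gets $\rho_+\to\lambda_+/|u_0|>0$ (and similarly $\rho_-\to\lambda_-/|u_0|>0$ near $\partial D_-$), so the $\rho_\pm$ are in fact signed contrasts.

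It then remains to see that $v$ is the total field. Using $(\Delta+k_0^2)u_0=0$ (from \eqref{eq:incident-field-u0}) and the definitions of $\rho_\pm$, the identity $(\Delta+k_0^2+\rho_+\chi_{D_+}-\rho_-\chi_{D_-})v=0$ holds a.e.\ on $D_+$, on $D_-$, and on $\mR^n\setminus\overline D$ (where $v=u_0$ and $\chi_{D_\pm}=0$); since every term is an $L^\infty(\mR^n)$ function with $\partial D$ contributing no singular mass, the identity holds in $\mD'(\mR^n)$, i.e.\ \eqref{eq:total-field-u-rho} is satisfied. Because $w\in H^1(\mR^n)$ is compactly supported it is trivially outgoing, so by uniqueness of the outgoing scattering solution $v=u_{\rho_\pm}$ and $u_{\rm sc}=w$; as $\supp w\subset\overline D\subset B_R$ for large $R$, we get $u_{\rm sc}=0$ outside $B_R$, i.e.\ $(D_\pm,\rho_\pm)$ is non-scattering with respect to $u_0$.

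I expect the main obstacle to be the tension between two requirements on $\rho_\pm=\mp(\Delta+k_0^2)v/v$: the numerator must not contain the (possibly singular) measures $\mu_\pm$, and the denominator $v=u_0+w$ must not vanish inside $D_\pm$ — either failure makes $\rho_\pm$ singular, which is why the naive choice $w=\tilde u$ does not work directly. The resolution is that the \emph{only} structural feature of $\tilde u$ one truly needs is its vanishing Cauchy data on $\partial D$, so it is enough to have $w=\tilde u$ on a neighbourhood of $\partial D$ — precisely the region where, by \eqref{eq:support-condition-2-phase}, the measures are absent and where, by the hypothesis $u_0<0$ on $\partial D$, one automatically has $u_0+\tilde u<0$; in the interior $w$ may be redefined freely at the sole (harmless) cost of altering the bounded function $\rho_\pm$ there, and one simply freezes $v$ at a negative constant. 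The remaining technical input is the $C^{1,1}$-regularity of $\tilde u$ near the free boundary, which I would take from the regularity theory for two-phase obstacle-type problems as developed for \eqref{eq:main-2-phase}.
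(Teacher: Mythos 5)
Your construction is correct and is, in substance, the paper's own argument: the paper reduces the theorem to the observation that near $\partial D$ one has $(\Delta+k_0^2)\tilde u = h\chi_D$ with the $h$ of \eqref{eq:constrast-h} in $L^\infty$, bounded away from zero, and $\tilde u$ vanishing continuously on $\partial D$, and then defers to \cite[Theorem~2.4]{KSS23Minimization}, whose proof is exactly your cutoff-and-freeze construction (keep $w=\tilde u$ near the free boundary, where $u_0+\tilde u<0$ and the measures are absent; replace it by a harmless negative constant in the interior; read $\rho_\pm$ off from the total-field equation). All the verifications you carry out -- the sign and limit of $\rho_\pm$ at $\partial D_\pm$, the negativity of $v$ on $D$, the gluing across $\partial D$ and across the transition annulus, and the identification of $v$ with the unique total field -- are sound.

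One remark: the $C^{1,1}$ regularity of $\tilde u$ up to the free boundary, which you single out as the remaining technical input, is neither available off the shelf for \eqref{eq:main-2-phase} (the paper never establishes it, and the classical two-phase obstacle regularity theory would have to be adapted to the extra zeroth-order terms $k_\pm^2\tilde u_\pm$ and the measures $\mu_\pm$) nor actually needed anywhere in your argument. Near $\partial D$ you only use that $(\Delta+k_0^2)\tilde u=h\chi_D$ with $h\in L^\infty$, which is read off directly from \eqref{eq:main-2-phase} once $\tilde u$ is locally bounded, together with the resulting $W^{2,p}_{\rm loc}\subset C^{1,\alpha}_{\rm loc}$ regularity on a full neighbourhood of $\partial D$; this already gives the continuity of $\tilde u$ across $\partial D$ and the absence of singular mass there. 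On the transition annulus $\supp(\nabla\eta)\cap D$, which is a compact subset of the open set $D\setminus(\supp\mu_+\cup\supp\mu_-)$, the same interior elliptic regularity (from $\Delta\tilde u=\lambda_\pm-k_\pm^2\tilde u_\pm\in L^\infty_{\rm loc}$ there) bounds $\nabla\tilde u$ and $\Delta\tilde u$ in $L^\infty$, which is all the product-rule computation of $(\Delta+k_0^2)w$ requires. So the appeal to two-phase obstacle regularity (and the global claim $w\in C^{1,1}(\mR^n)$) can simply be deleted; only $(\Delta+k_0^2)w\in L^\infty(\mR^n)$ is used downstream.
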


\begin{rem*}
In the above theorem,  if the obstacles $D_{\pm}$ are ``touching'' each other,  i.e.,  $\partial D_{+}\cap \partial D_{-} \neq \emptyset$, then \emph{the common boundary is a two-phase free boundary}:   
\begin{equation}
\text{$u_{\rho\pm} > u_{0}$ in $D_{+}$ and $u_{\rho\pm} < u_{0}$ in $D_{-}$ near $\partial D_{+} \cap \partial D_{-}$,} \label{eq:two-phase-free-boundary}
\end{equation}
where $u_{\rho\pm}$ is given in \eqref{eq:total-field-u-rho}. In addition, $\lim_{D_{\pm}\ni x \rightarrow x_{0}} \rho_{\pm}(x)$ both exist with 
\begin{equation*}
\lim_{D_{+}\ni x \rightarrow x_{0}} \rho_{+}(x) = -\frac{\lambda_{+}}{u_{0}(x_{0})} > 0 ,\quad \lim_{D_{-} \ni x \rightarrow x_{0}} \rho_{-}(x) = -\frac{\lambda_{-}}{u_{0}(x_{0})} > 0 
\end{equation*} 
for each $x_{0}\in \partial D_{+} \cap \partial D_{-}$. 
\end{rem*}

\begin{rem*}[Existence of incident field $u_{0}$]
In general $\hbox{int}(\overline{ D_{+}\cup D_{-}})$ is not a Lipschitz domain. We still can construct such $u_{0}$ (can be even chosen to be Herglotz wave function \eqref{eq:Herglotz-wave-function} below) using \cite[Theorem~1.2]{KSS23PositiveHelmholtz} when $\partial D_{+}\cup\partial D_{-}$ is contained in a small set. 
\end{rem*}

\begin{proof}[Proof of Theorem~\ref{thm:nonscattering}]
From \eqref{eq:main-2-phase} and the support condition \eqref{eq:support-condition-2-phase}, one sees that there exists a neighborhood $U$ of $\partial (D_{+}\cup D_{-}) \equiv \partial D_{+} \cup \partial D_{-}$ in $\mR^{n}$ such that 
\begin{equation*}
(\Delta + k_{0}^{2}) \tilde{u} = h \chi_{D} \text{ in $U$} ,\quad \left. \tilde{u} \right|_{U\setminus D} = 0 ,\quad D = D_{+}\cup D_{-},
\end{equation*}
where 
\begin{equation}
h = - (k_{+}^{2}-k_{0}^{2})\tilde{u}_{+} + (k_{-}^{2}-k_{0}^{2})\tilde{u}_{-} + \lambda_{+}\chi_{D_{+}} - \lambda_{-}\chi_{D_{-}} \in L^{\infty}(D). \label{eq:constrast-h}
\end{equation}
By continuity of $\tilde{u}$ in $U$ and $\left.\tilde{u}\right|_{\partial D}=0$, one has $\abs{h} \ge \frac{1}{2} \min\{\lambda_{+},\lambda_{-}\} > 0$ near $\partial D$ in $D$. Now, the theorem (and the following remark) can be proved by following the exact same argument as in \cite[Theorem~2.4]{KSS23Minimization} (with $g\equiv 0$) and the discussions following the theorem.

\end{proof}

\subsubsection{\label{sec:Magnetic-field}
Connection with magnetic fields}
The Helmholtz equation is fundamental for understanding the spatial characteristics of electromagnetic fields, which provides a mathematical framework to describe how electromagnetic fields propagate and vary in space.

Here, we shall connect the concept developed in this paper to one of the waveguide mode, called the transverse-electric mode (TE-mode), which roughly means that there is no electric field in the direction of propagation, see \eqref{eq:conductivity} below. In this case, since there is only a magnetic field along the direction of propagation, sometimes we call this waveguide mode the $\bfH$-mode. One can refer e.g. the monograph \cite{KH15MaxwellBook} for mode details about this topic.

Let $\omega_{0} > 0$ denote a frequency, $\varepsilon_{0}$ represent the electric permittivity in a vacuum, and $\mu_{0}$ denote the magnetic permeability in a vacuum. The (time-harmonic) magnetic field $\mathbf{H} = (H_{1}, H_{2}, H_{3})$ in a medium with zero conductivity is governed by the equation
\begin{equation}
-\curl\,\left(\frac{1}{\calE(x)}\curl\,\bfH\right) + k_{0}^{2}\bfH = 0 \quad \text{in $\mR^{3}$,} \quad \text{with $\calE(x) = \frac{\varepsilon(x)}{\varepsilon_{0}} + \mathbf{i} \frac{\sigma(x)}{\omega\varepsilon_{0}}$,}\label{eq:Magnetic-field}
\end{equation}
where $k_{0} = \omega_{0}\sqrt{\varepsilon_{0}\mu_{0}} > 0$ is the wave number. In this equation, $\varepsilon(x)$ signifies the electric permittivity in the medium, and $\sigma(x)$ represents its conductivity; for further details, refer to \cite[(5.18)--(5.19)]{KG08Factorization} or \cite[(1.8)--(1.9)]{KH15MaxwellBook}.

We assume that $\calE = 1$ if and only if $\varepsilon=\varepsilon_{0}$ and $\sigma=0$ outside of a bounded domain. When we illuminate the inhomogeneity, supported on $\supp \,(\calE-1)$, using the incident magnetic field $\bfH_{0}$ that satisfies
\begin{equation*}
-\curl \,\curl \, \bfH_{0} + k_{0}^{2} \bfH_{0} = 0 \quad \text{in $\mathbb{R}^{3}$,}
\end{equation*}
then, under certain mild assumptions on $\varepsilon$ and $\sigma$ (refer to \cite[Theorem~5.5]{KG08Factorization}), there exists a unique scattered magnetic field $\bfH_{\rm sc}$ that  satisfies the equation
\begin{equation*}
-\curl \left( \frac{1}{\calE(x)} \curl\, \bfH_{\rm sc} \right) + k_{0}^{2} \bfH_{\rm sc} = - \curl \left( 1-\frac{1}{\calE(x)} \curl\, \bfH_{0} \right),
\end{equation*}
and the Silver-M\"{u}ller radiation condition 
\begin{equation*}
\curl \, \bfH_{\rm sc}(x) \times \hat{x} - \mathbf{i} k_{0} \bfH_{\rm sc}(x) = O(\abs{x}^{-2}) \quad \text{as $\abs{x}\rightarrow\infty$,}
\end{equation*}
uniformly on all direction $\hat{x}=x/|x| \in \calS^{2}$. 

\begin{rem*} 
By using the fact that ${\rm div}\,\curl \equiv 0$ and the curl-curl identity 
\begin{equation}
-\curl\,\curl\,=\Delta - \nabla \, {\rm div}, \label{eq:curl-curl-identity}
\end{equation}
it is easy to see that \eqref{eq:Magnetic-field} is equivalent to 
\begin{equation}
\Delta \bfH + \frac{\nabla \calE(x)}{\calE(x)} \times \curl\,\bfH + k_{0}^{2} \calE(x) \bfH=0 ,\quad {\rm div}\,\bfH = 0 \quad \text{in $\mR^{3}$.}
\label{eq:Magnetic-field-equivalent}
\end{equation}
It is  also  noteworthy  that the incident field satisfies the equation 
\begin{equation*}
\Delta \bfH_{0} + k_{0}^{2} \bfH_{0}=0 ,\quad {\rm div}\,\bfH_{0}=0 \quad \text{in $\mR^{3}$,}
\end{equation*}
and by direct computations, one  can easily see that $\Delta(x\cdot\bfH_{0}) + k_{0}^{2}(x\cdot\bfH_{0})=0$ in $\mR^{3}$. The curl-curl identity \eqref{eq:curl-curl-identity} can be extended for dimension $n \ge 2$ in terms of $n$-dimensional curl and its formal transpose. This even can be further extended to the symmetric tensors case in terms of Saint Venant operator \cite{IKS23UCPMRT}. 
\end{rem*}

In practical application, one usually illuminates the inhomogeneity using the superposition of plane waves, which called the Herglotz wave: 
\begin{equation*}
\bfH_{0}[p](\hat{x}) := \int_{\calS^{2}} p(\hat{z}) e^{\mathbf{i}k_{0}x\cdot\hat{z}} \, \rmd \hat{z} \quad \text{for all $p \in L_{\rm t}^{2}(\calS^{2})$ and for all $x\in\mR^{3}$,}
\end{equation*}
where 
\begin{equation*}
L_{\rm t}^{2}(\calS^{2}) := \left\{ \bfv \in (L^{2}(\calS^{2}))^{3} : \hat{x}\cdot \bfv(\hat{x}) = 0 , \hat{x}\in\calS^{2} \right\}.
\end{equation*}

The radiation condition for electromagnetic field is usually called the Silver-M\"{u}ller radiation condition, which is closely related to (see \cite[Corollary~2.53]{KH15MaxwellBook})  Sommerfeld radiation condition \eqref{sommerfeld}
and the far-field operator is analogously defined by the far-field amplitude of the scattered field.
In fact, one can reconstruct $\supp\,(\calE-1)$ from the far-field amplitude \cite{KG08Factorization}.

In the case when both $\calE$ and $\bfH$ are cylindrical, i.e. independent to the variable $x_{3}$, we see that the third component $H_{3}(x')$ of $\bfH$ in \eqref{eq:Magnetic-field} satisfies the isotropic elliptic equation 
\begin{equation}
{\rm div}'\, \left( \frac{1}{\calE(x')} \nabla' H_{3} \right) + k_{0}^{2} H_{3} = 0 \quad \text{in $\mR^{2}$,} \label{eq:conductivity}
\end{equation}
where $\nabla'$ and ${\rm div}'$ are gradient and divergence operator on $\mR^{2}$. In this case, we usually not interested in the first two components $H_{1}$ and $H_{2}$, and we simply put $H_{1}\equiv H_{2} \equiv 0$, and this situation is called the \emph{magnetic mode} ($\bfH$-mode) or \emph{transverse-electric mode} (TE-mode) \cite[page~11]{KH15MaxwellBook}.

If $\calE \in C^{2}(\mR^{2})$ and is real-valued (iff $\sigma \equiv 0$), one can rewrite \eqref{eq:conductivity} as the Helmholtz equation: 
\begin{equation} 
(\Delta' + k_{0}^{2} + q(x'))u=0 \quad \text{in $\mR^{2}$} \label{eq:Helmholtz-magnetic}
\end{equation} 
with $u = (\calE(x'))^{\frac{1}{2}} H_{3}(x')$ and $q(x')= - (\calE(x'))^{\frac{1}{2}} \Delta' \left( (\calE(x'))^{-\frac{1}{2}} \right)$, where $\Delta' = {\rm div}' \nabla'$ is the Laplacian on $\mR^{2}$, see e.g. \cite[(0.2)--(0.3)]{Nachman19962DCalderon}. 
We can formulate similar inverse problems involving the reconstruction of $q$ from the knowledge of the far-field operator \eqref{eq:far-field-operator}. After recovering $q$, we then finally recover $\calE$ by solving the following elliptic boundary-value problem: 
\begin{equation} 
\Delta' \left( (\calE(x'))^{-\frac{1}{2}} \right) + q(x') (\calE(x'))^{-\frac{1}{2}} = 0 \text{ in $B_{R}$} ,\quad \left. (\calE(x'))^{-\frac{1}{2}} \right|_{\partial B_{R}}=1 \label{eq:reconstruct-calE}
\end{equation}
by choosing suitable large $R>0$. We can construct $q = \rho_{+}\chi_{D_{+}} - \rho_{-}\chi_{D_{-}}$ as described in Theorem~\ref{thm:nonscattering}, and then construct $\calE = \calE(x_{1},x_{2})$ by solving \eqref{eq:reconstruct-calE}. Formally, this is non-scattering with respect to some incident $\bfH$-mode/TE-mode magnetic field.

\subsubsection{Some related application}

We now revisit the Helmholtz equation, selecting $u^{\rm inc}$ as the superposition of the plane incident wave, expressed as the Herglotz wave function:
\begin{equation}
u^{\rm inc}[f] = \int_{\mathcal{S}^{n-1}} e^{\mathbf{i}kx\cdot\theta} f(\theta) \, \rmd \theta \label{eq:Herglotz-wave-function}
\end{equation}
where $f \in L^{2}(\mathcal{S}^{n-1})$. Consequently, we consider the far-field operator:
\begin{equation}
f\in L^{2}(\mathcal{S}^{n-1}) \mapsto \int_{\mathcal{S}^{n-1}}u^{\infty}(\theta',\theta)f(\theta) \, \rmd \theta. \label{eq:far-field-operator}
\end{equation}
Here, $u^{\infty}(\theta',\theta)$ represents the far-field of the scattered field corresponding to the incident plane wave $e^{\mathbf{i}kx\cdot\theta}$. 

Combining results from \cite{Nachman19962DCalderon, sylvester1987global}, if $k^{2}$ is not a Dirichlet eigenvalue of $-\Delta$ on $D$, it can be shown that $\rho\chi_{D}$ can be uniquely determined from the far-field operator \eqref{eq:far-field-operator}. Refer to \cite{hahner2001new} for a log-type stability estimate, which is nearly optimal \cite{isaev2013exponential}. See also \cite[Appendix~B]{FKW23BayesDeterministic}.
In practice, obtaining only finitely many measurements $\left\{u^{\rm inc}[f_{i}]:i=1,\cdots,N\right\}$ is feasible. 
However, based on nonscattering results in Theorem~\ref{thm:nonscattering} above (also see \cite{KLSS22QuadratureDomain, KSS23Minimization}), it is generally impossible to determine $\rho\chi_{D}$ solely from a single measurement $u^{\rm inc}[f_{1}]$.
Thus, one should not expect to \emph{always} determine $\rho\chi_{D}$ from finitely many measurements $\left\{u^{\rm inc}[f_{i}]:i=1,\cdots,N\right\}$.

Intuitively, one can approximate the far-field operator \eqref{eq:far-field-operator} using $\left\{u^{\rm inc}[f_{i}]:i=1,\cdots,N\right\}$ for large $N$. For instance, choosing $f_{i}$ as the eigenfunction of the Laplace-Beltrami operator on $\mathcal{S}^{n-1}$ is a possible approach. This intuition can be validated in a probabilistic sense (with randomly chosen samples $f_{1},\cdots,f_{N}$ with a large $N$), as seen in \cite{FKW23BayesDeterministic}. In simple terms, while one might fail to determine $\rho\chi_{D}$ from the knowledge of $\left\{u^{\rm inc}[f_{i}]:i=1,\cdots,N\right\}$ for randomly chosen $f_{1},\cdots,f_{N}$, the probability of such a situation occurring decreases as the sample size $N$ increases.

\section{Multiphase problem through  minimization}

\addtocontents{toc}{\SkipTocEntry}
\subsection{Main results}

We now delve into the exploration of the existence of two-phase $(k_{1},k_{2})$-quadrature domains \eqref{eq:main-2-phase}. Let $\Omega \subset \mathbb{R}^{n}$ be a bounded Lipschitz domain in $\mathbb{R}^{n}$ ($n \ge 2$). The function space $C_{c}^{\infty}(\Omega)$ consists of $C^{\infty}(\mathbb{R}^{n})$ functions supported in $\Omega$, and $H_{0}^{1}(\Omega)$ is the completion of $C_{c}^{\infty}(\Omega)$ with respect to the $H^{1}(\Omega)$-norm.

For each $f_{1},\ldots,f_{m} \in L^{\infty}(\mathbb{R}^{n})$ and $\bfk=(k_{1},\ldots,k_{m})$ with $k_{1},\ldots,k_{m}\ge 0$, we consider the functional
\begin{equation}
\mJ_{\bfk}(\bfu) := \sum_{i=1}^{m} \mJ_{k_{i}}(u_{i}) ,\quad \mJ_{k_{i}}(u_{i}) := \int_{\Omega} \left( \abs{\nabla u_{i}(x)}^{2} - k_{i}^{2}\abs{u_{i}(x)}^{2} - 2f_{i}(x)u_{i}(x) \right) , \rmd x \label{eq:functional-J}
\end{equation}
for $\bfu=(u_{1},\ldots,u_{m})\in (H^{1}(\Omega))^{m}$. 
We define
\begin{equation*}
\begin{aligned}
\mK_{m}(\Omega) &:= \left\{ (u_{1},\cdots,u_{m}) \in (H_{0}^{1}(\Omega))^{m} : \text{$u_{i}\ge 0$ for all $i=1,\cdots,m$} \right\}, \\
\mS_{m}(\Omega) &:= \left\{ (u_{1},\cdots,u_{m}) \in \mK_{m}(\Omega) : \text{$u_{i}\cdot u_{j}= 0$ for all $i\neq j$} \right\}. 
\end{aligned}
\end{equation*}
Similar to \cite{CTV05SegregationProblem}, we refer to the elements in $\mS_{m}(\Omega)$ as \emph{segregated states}. A state $\bfu=(u_{1},\ldots,u_{m})\in (H^{1}(\Omega))^{m}$ is called \emph{segregated} if $u_{i}\cdot u_{j}=0$ for all $i\neq j$.

The primary focus of this paper is to investigate the following minimization problem:
\begin{equation}
\text{Minimize $\mJ_{\bfk}(\bfu)$ subject to segregated states $\bfu\in \mS_{m}(\Omega)$.} \label{eq:minimizing-m-phase-problem}
\end{equation}

The situation where $\bfk \equiv \mathbf{0}$ was examined in \cite{AS16MultiPhaseQD}, and an application from control theory was presented. Also  in \cite{KSS23Minimization}, the functional $\mJ_{k_{i}}$ was investigated for sufficiently small $k_{i} > 0$.

By using \cite[Lemma~3.1]{KSS23Minimization}, it is easy to see that $\mJ_{\bfk}$ is unbounded from below in $\mS_{m}(\Omega)$ if $k_{i}>k_{*}$ for some $i \in \{1,\cdots,m\}$, where
\begin{equation}
k_{*}^{2}(\Omega) := \inf_{\phi\in C_{c}^{\infty}(\Omega),\phi\not\equiv 0} \frac{\norm{\nabla\phi}_{L^{2}(\Omega)}^{2}}{\norm{\phi}_{L^{2}(\Omega)}^{2}} > 0 \label{eq:principal-eigenvalue}
\end{equation}
is the first Dirichlet eigenvalue of $\Omega$. When $0\le k_{i}<k_{*}$ for all $i=1,\cdots,m$, by using \cite[Lemma~3.4]{KSS23Minimization}, we know that $\mJ_{\bfk}$ is weakly lower semi-continuous on $(H_{0}^{1}(\Omega))^{m}$. Since the set $\mS_{m}(\Omega)$ is closed in $(H_{0}^{1}(\Omega))^{m}$, by following the standard arguments of calculus of variations (as in \cite[Proposition~3.6]{KSS23Minimization}) one can show that
\begin{equation}
\text{there exists a minimizer $\bfu_{*}$ of the functional $\mJ_{\bfk}$ in $\mS_{m}(\Omega)$.} \label{eq:existence-global-minimizer}
\end{equation} 
We show that the difference $u_{*,i}-u_{*,j}$ locally satisfies the two-phase obstacle equation. 

\begin{thm}\label{thm:local-properties}
Let $\Omega$ be a bounded Lipschitz domain in $\mR^{n}$, let $0 \le k_{i} < k_{*}$ and $f_{i}\in L^{\infty}(\Omega)$ for all $i=1,\cdots,m$. If $\bfu_{*}=(u_{*,1},\cdots,u_{*,m})$ is a \emph{segregated ground state} of the energy functional $\mJ_{\bfk}$, i.e. a minimizer of the functional $\mJ_{\bfk}$ in $\mS_{m}(\Omega)$, then
\begin{equation}
\Delta(u_{*,i}-u_{*,j}) + k_{i}^{2}u_{*,i} - k_{j}^{2}u_{*,j} = -f_{i}\chi_{\{u_{*,i}>0\}} + f_{j}\chi_{\{u_{*,j}>0\}} \quad \text{in $\Omega \setminus \bigcup_{k\neq i,j} \overline{\Omega_{k}}$}, \label{eq:local-2phase}
\end{equation}
where $\Omega_{i} = \{u_{*,i}>0\}$ for all $i=1,\cdots,m$.
\end{thm}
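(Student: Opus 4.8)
The plan is to derive the Euler--Lagrange equation \eqref{eq:local-2phase} by exploiting the local structure of the problem: away from the closures $\overline{\Omega_\ell}$ for $\ell \neq i,j$, only the two components $u_{*,i}$ and $u_{*,j}$ are relevant, and by segregation they have disjoint supports, so the difference $v := u_{*,i} - u_{*,j}$ encodes both of them ($u_{*,i} = v_+$ and $u_{*,j} = v_-$ locally, modulo the positivity of the remaining components, which vanish there). First I would fix a point $x_0 \in \Omega \setminus \bigcup_{\ell\neq i,j}\overline{\Omega_\ell}$ and a small ball $B = B_r(x_0)$ compactly contained in that open set; on $B$ we have $u_{*,\ell} \equiv 0$ for $\ell \neq i,j$, and the only surviving constraints are $u_{*,i}, u_{*,j} \ge 0$ and $u_{*,i} u_{*,j} = 0$. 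The key observation, going back to the segregation literature \cite{CTV05SegregationProblem,AS16MultiPhaseQD}, is that minimizing $\mJ_{k_i}(u_i) + \mJ_{k_j}(u_j)$ over the pair with the sign and disjointness constraints is equivalent to minimizing a single two-phase obstacle-type functional in the variable $v$, namely
\begin{equation*}
v \mapsto \int_B \left( \abs{\nabla v}^2 - k_i^2 \abs{v_+}^2 - k_j^2 \abs{v_-}^2 - 2f_i v_+ + 2 f_j v_- \right) \rmd x
\end{equation*}
over $v \in H^1$ with the prescribed boundary data $v = u_{*,i} - u_{*,j}$ on $\partial B$; indeed any competitor $v$ splits canonically as $(v_+, v_-) \in \mS_2$, and conversely. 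This reduction uses crucially that $k_i, k_j < k_*(\Omega)$ (hence, a fortiori, below the first eigenvalue of the smaller domain $B$), which guarantees that the reduced functional is still coercive and weakly lower semicontinuous, so the local minimality of $\bfu_*$ transfers to $v$.

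Next I would perform the first-variation computation for this reduced functional. Taking a test function $\varphi \in C_c^\infty(B)$ and forming $v + t\varphi$, one differentiates at $t = 0$; the terms $\abs{\nabla v}^2$, $\abs{v_+}^2 = v^2 \chi_{\{v>0\}}$, and $\abs{v_-}^2 = v^2\chi_{\{v<0\}}$ are differentiable in the relevant sense (the set $\{v = 0\}$ contributes nothing because $v = 0$ there), and the terms $f_i v_+$, $f_j v_-$ are handled the same way. This yields, in the distributional sense on $B$,
\begin{equation*}
\Delta v + k_i^2 v_+ - k_j^2 v_- = -f_i \chi_{\{v>0\}} + f_j \chi_{\{v<0\}}.
\end{equation*}
Since on $B$ we have $\{v > 0\} = \{u_{*,i} > 0\} = \Omega_i \cap B$ and $\{v < 0\} = \{u_{*,j} > 0\} = \Omega_j \cap B$, and $v_+ = u_{*,i}$, $v_- = u_{*,j}$ there, this is exactly \eqref{eq:local-2phase} restricted to $B$. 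As $x_0$ was arbitrary in $\Omega \setminus \bigcup_{\ell\neq i,j}\overline{\Omega_\ell}$, a partition-of-unity / exhaustion argument patches these local identities into the global statement on $\Omega \setminus \bigcup_{\ell\neq i,j}\overline{\Omega_\ell}$.

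The main obstacle, and the point requiring care, is justifying the reduction to the single-variable two-phase functional and the admissibility of one-sided variations. Specifically: (i) one must check that perturbing $v$ by $t\varphi$ produces a competitor whose canonical splitting $((v+t\varphi)_+, (v+t\varphi)_-)$ still lies in $\mS_2$ with the right boundary values — this is automatic from the definition of $v_\pm$ — but that the \emph{energy comparison} goes through requires that the original constraint $u_{*,i} u_{*,j} = 0$ is not more restrictive than ``$v$ arbitrary'', which is the content of the canonical-splitting lemma; (ii) one must verify that $\bfu_*$ being a minimizer over all of $\mS_m(\Omega)$ forces $(u_{*,i}, u_{*,j})$ to be a \emph{local} minimizer of the pair-functional on $B$ with its own trace data — here one glues a local competitor to $\bfu_*$ outside $B$ and invokes that the glued tuple still lies in $\mS_m(\Omega)$ and agrees with $\bfu_*$ outside $B$, so the energy difference is localized to $B$; and (iii) the differentiability of $t \mapsto \int_B \abs{(v+t\varphi)_\pm}^2$ at $t=0$, which follows from dominated convergence together with the fact that $\nabla v = 0$ a.e.\ on $\{v=0\}$ and $\chi_{\{v+t\varphi>0\}} \to \chi_{\{v>0\}}$ pointwise off $\{v=0\}$. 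All of this is standard for $k_i = 0$ (as in \cite{AS16MultiPhaseQD}) and for small $k_i$ (as in \cite[Proposition~3.6 and its proof]{KSS23Minimization}); the only new point is carrying the lower-order terms $k_i^2 u_{*,i}$, $k_j^2 u_{*,j}$ through the variation, which is harmless since they are subcritical and enter linearly after differentiation.
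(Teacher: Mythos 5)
Your overall strategy -- freeze the components $u_{*,\ell}$, $\ell\neq i,j$, work in the open set where they vanish, encode the pair $(u_{*,i},u_{*,j})$ in the single function $v=u_{*,i}-u_{*,j}$ via the canonical splitting $v_{\pm}$, and vary $v$ -- is essentially the paper's: there the competitor is $z_i=(u_{*,i}-u_{*,j}-\epsilon\psi)_{+}$, $z_j=(u_{*,i}-u_{*,j}-\epsilon\psi)_{-}$, which is exactly your reduction. The gluing argument in your point (ii) and the energy identity $\abs{\nabla v}^2=\abs{\nabla v_+}^2+\abs{\nabla v_-}^2$ are fine. The genuine gap is in point (iii). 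The map $t\mapsto\int_B f_i\,(v+t\varphi)_{+}\,\rmd x$ is \emph{not} differentiable at $t=0$ for a general (sign-changing) $\varphi$: on the set $\{v=0\}$ the difference quotient equals $\varphi_{+}$ as $t\to 0^{+}$ and $-\varphi_{-}$ as $t\to 0^{-}$, so the two one-sided derivatives differ by $\int_{\{v=0\}}f_i\abs{\varphi}\,\rmd x$. Your justification (``$\chi_{\{v+t\varphi>0\}}\to\chi_{\{v>0\}}$ pointwise off $\{v=0\}$'') addresses precisely the harmless set and ignores the problematic one; and $\{v=0\}\cap B\supset B\setminus(\Omega_i\cup\Omega_j)$ has positive measure in general -- this is the whole obstacle-problem structure of the statement. (The quadratic terms $\abs{v_\pm}^2$ and the gradient term are genuinely differentiable; only the linear terms fail.) A secondary slip: with the convention $v_-\ge 0$ used implicitly in your splitting, the reduced functional should carry $-2f_jv_-$, not $+2f_jv_-$; as written your functional and your final PDE are not consistent with one another.

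The repair is to do what the paper does: take $\psi\ge 0$ and perturb only in one direction, i.e.\ compare with $((v-\epsilon\psi)_{+},(v-\epsilon\psi)_{-})$ for $\epsilon>0$. Dividing by $2\epsilon$ and letting $\epsilon\to 0$ yields the distributional inequality
\begin{equation*}
\Delta v+k_i^2v_{+}-k_j^2v_{-}\le -f_i\chi_{\{u_{*,i}>0\}}+f_j\chi_{\{u_{*,j}>0\}},
\end{equation*}
and interchanging the roles of $i$ and $j$ (equivalently, perturbing in the opposite direction) gives the reverse inequality, whence equality. Note that obtaining the characteristic functions of the \emph{open} positivity sets on the right-hand side, rather than of $\{v\ge 0\}$ and $\{v\le 0\}$, requires the bookkeeping with the sets $Q_\epsilon=\{v\le\epsilon\psi\}$ (or, alternatively, the two one-sided inequalities combined with the fact that $\Delta v=0$ a.e.\ on $\{v=0\}$ once $v\in W^{2,p}_{\rm loc}$ is known); this step cannot be skipped, since a naive two-sided variation is undefined exactly where the right-hand side of \eqref{eq:local-2phase} is claimed to vanish.
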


\begin{rem*}
As mentioned above, we assume $\Omega$ has Lipschitz boundary in order to guarantee \eqref{eq:existence-global-minimizer}, see also \cite[Remark~3.5]{KSS23Minimization}. 
\end{rem*}

When $m=2$, the functional $\mJ_{k_{1},k_{2}} \equiv \mJ_{\bfk}$ reads 
\begin{equation*}
\mJ_{k_{1},k_{2}}(u_{1},u_{2}) = \mJ_{k_{1}}(u_{1}) + \mJ_{k_{2}}(u_{2}), 
\end{equation*}
and the minimization problem \eqref{eq:minimizing-m-phase-problem} reads: 
\begin{equation}
\text{minimize $\mJ_{k_{1},k_{2}}(u_{1},u_{2})$ subject to $(u_{1},u_{2})\in \mS_{2}(\Omega)$.} \label{eq:minimizing-2phase-problem}
\end{equation}
Similar to \cite{EPS11TwoPhaseQD}, we consider the functional
\begin{equation*}
\tilde{\mJ}_{k_{1},k_{2}}(U) = \int_{\Omega} \left( \abs{\nabla U}^{2} - k_{1}^{2}\abs{U_{+}}^{2} - k_{2}^{2}\abs{U_{-}}^{2} - 2f_{1}U_{+} - 2f_{2}U_{-} \right) \, \rmd x
\end{equation*}
with $f_{1},f_{2} \in L^{\infty}(\Omega)$, and the minimization problem
\begin{equation}
\text{minimize $\tilde{\mJ}_{k_{1},k_{2}}(U)$ subject to $U \in H_{0}^{1}(\Omega)$.} \label{eq:minimizing-2phase-problem-equivalent}
\end{equation}

In fact, the minimizing problems \eqref{eq:minimizing-2phase-problem} and \eqref{eq:minimizing-2phase-problem-equivalent} are equivalent in the following sense:
\begin{equation}
\parbox{0.8\linewidth}{If $\tilde{u}$ is a minimizer of the functional $\tilde{\mJ}_{k_{1},k_{2}}$ in $H_{0}^{1}(\Omega)$, then $(\tilde{u}_{+},\tilde{u}_{-})$ is a minimizer of the functional $\mJ_{k_{1},k_{2}}$ in $\mS_{2}(\Omega)$. Conversely, if $(u_{*,1},u_{*,2})$ is a minimizer of the functional  $\mJ_{k_{1},k_{2}}$ in $\mS_{2}(\Omega)$, then $\tilde{u}_{*} := u_{*,1}-u_{*,2}$ is a minimizer of the functional $\tilde{\mJ}_{k_{1},k_{2}}$ in $H_{0}^{1}(\Omega)$. } \label{eq:equivalent-2phase}
\end{equation}
This can be proved by following the arguments in \cite[Theorem~2]{AS16MultiPhaseQD} and the observation 
\begin{equation*}
w_{1}=(w_{1}-w_{2})_{+} ,\quad w_{2}=(w_{1}-w_{2})_{-} \quad \text{for all $(w_{1},w_{2})\in \mS_{2}(\Omega)$. }
\end{equation*}
When $0 \le k_{1},k_{2} < k_{*}$, by using \eqref{eq:existence-global-minimizer}, one immediately sees that there exists a minimizer $\tilde{u}$ of the functional $\tilde{\mJ}_{k_{1},k_{2}}$ in $H_{0}^{1}(\Omega)$. Consequently, from Theorem~\ref{thm:local-properties}, we can easily conclude that: If $\tilde{u}$ is a minimizer of the functional $\tilde{J}_{k_{1},k_{2}}$ in $H_{0}^{1}(\Omega)$, then 
\begin{equation}
\Delta \tilde{u} + k_{1}^{2} \tilde{u}_{+} - k_{2}^{2} \tilde{u}_{-} = -f_{1}\chi_{\{\tilde{u}>0\}} + f_{2}\chi_{\{\tilde{u}<0\}} \quad \text{in $\Omega$.} \label{eq:global-minimizer-2phase}
\end{equation} 
Therefore the two-phase problem \eqref{eq:main-2-phase} is a special case of the multi-phase problem \eqref{eq:local-2phase}. We also exhibit some interesting points in Appendix~\ref{appen:saddle-point}.

By using approximation theorem, we are also able to extend the existence result for the solution of the local two-phase problem \eqref{eq:local-2phase} for $f_{i} = \mu_{i} - \lambda_{i}$ when $\mu_{i}$ is a measure.

\begin{thm}\label{thm:local-properties-unbounded-f}
Let $\Omega$ be a bounded Lipschitz domain in $\mR^{n}$, let $0 \le k_{i} < k_{*}$, let $\lambda_{i}$ be positive $L^{\infty}(\Omega)$ functions with $\lambda_{i} \ge c > 0$ in $\Omega$, and let $\mu_{i} \in \mE'(\Omega)$. Then there exists at least one solution $\bfu_{*}=(u_{*,1},\cdots,u_{*,m})$ of \eqref{eq:local-2phase} with $f_{i}=\mu_{i}-\lambda_{i}$. 
\end{thm}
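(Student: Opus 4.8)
The plan is to produce the solution by approximation, reducing to the bounded‑coefficient case already settled in Theorem~\ref{thm:local-properties}. Mollify each datum, $\mu_{i}^{\eps}:=\mu_{i}\ast\varphi_{\eps}\in C_{c}^{\infty}(\Omega)$ for a standard mollifier $\varphi_{\eps}$; since $\supp\mu_{i}\Subset\Omega$, the supports $\supp\mu_{i}^{\eps}$ remain inside a fixed compact subset of $\Omega$ once $\eps$ is small, so $f_{i}^{\eps}:=\mu_{i}^{\eps}-\lambda_{i}\in L^{\infty}(\Omega)$. For each such $\eps$, \eqref{eq:existence-global-minimizer} together with Theorem~\ref{thm:local-properties} provides a segregated ground state $\bfu^{\eps}=(u_{1}^{\eps},\dots,u_{m}^{\eps})\in\mS_{m}(\Omega)$ of the functional $\mJ_{\bfk}$ (with $f_{i}^{\eps}$ in place of $f_{i}$), which solves \eqref{eq:local-2phase} with $f_{i}=f_{i}^{\eps}$ and $\Omega_{i}^{\eps}:=\{u_{i}^{\eps}>0\}$. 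The theorem then follows once we let $\eps\to0$ along a subsequence and identify the limit $\bfu_{*}$ as a solution for the data $f_{i}=\mu_{i}-\lambda_{i}$ (understood distributionally, with $u_{*,i}\in W_{0}^{1,p}(\Omega)$ for every $p<n/(n-1)$ and $u_{*,i}\in H_{\mathrm{loc}}^{1}(\Omega\setminus\supp\mu_{i})$). This parallels the approximation argument used for the one‑phase (hybrid) $k$‑quadrature domains with measure data in \cite{KSS23Minimization}.

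Two preliminary reductions. First, since $f_{i}^{\eps}$ is large and positive on a fixed neighbourhood of $\supp\mu_{i}$ whereas $f_{\ell}^{\eps}=-\lambda_{\ell}<0$ there for $\ell\neq i$, a comparison/nondegeneracy argument for the variational inequalities characterizing the minimizer $\bfu^{\eps}$ yields $\supp\mu_{i}^{\eps}\subset\Omega_{i}^{\eps}$ for all small $\eps$ (in the quadrature‑domain situation where the $\mu_{i}\ge0$ have pairwise separated supports; otherwise one keeps the measures $\mu_{i}^{\eps}$ restricted to $\Omega_{i}^{\eps}$), so that $\mu_{i}^{\eps}\chi_{\Omega_{i}^{\eps}}=\mu_{i}^{\eps}$. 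Second, to isolate the unbounded part we write $u_{i}^{\eps}=v_{i}^{\eps}+w_{i}^{\eps}$, where $v_{i}^{\eps}\in H_{0}^{1}(\Omega)$ is the Green potential of $\mu_{i}^{\eps}$ for $-\Delta-k_{i}^{2}$ on $\Omega$ with vanishing boundary values --- well defined and nonnegative because $k_{i}<k_{*}$ makes $-\Delta-k_{i}^{2}$ positive on $H_{0}^{1}(\Omega)$ with nonnegative Green function --- so that $(\Delta+k_{i}^{2})v_{i}^{\eps}=-\mu_{i}^{\eps}$ in $\Omega$. Subtracting the corresponding identities for $v_{i}^{\eps}-v_{j}^{\eps}$ from \eqref{eq:local-2phase} and using $\mu_{i}^{\eps}\chi_{\Omega_{i}^{\eps}}=\mu_{i}^{\eps}$ turns the equation for the regular parts into
\[
\Delta(w_{i}^{\eps}-w_{j}^{\eps})+k_{i}^{2}w_{i}^{\eps}-k_{j}^{2}w_{j}^{\eps}=\lambda_{i}\chi_{\Omega_{i}^{\eps}}-\lambda_{j}\chi_{\Omega_{j}^{\eps}}\quad\text{in }\Omega\setminus\bigcup_{k\neq i,j}\overline{\Omega_{k}^{\eps}},
\]
whose right‑hand side is bounded in $L^{\infty}$ uniformly in $\eps$; for $m=2$ one has instead the global counterpart of \eqref{eq:global-minimizer-2phase}, valid on all of $\Omega$.

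Next I would derive $\eps$‑uniform estimates. Classical potential bounds give $\norm{v_{i}^{\eps}}_{W_{0}^{1,p}(\Omega)}\le C$ for every $1\le p<n/(n-1)$, with $\{v_{i}^{\eps}\}$ bounded and convergent in $C_{\mathrm{loc}}^{k}(\Omega\setminus\supp\mu_{i})$ for all $k$, and $v_{i}^{\eps}\to v_{i}$ in $W_{0}^{1,p}(\Omega)$ where $(\Delta+k_{i}^{2})v_{i}=-\mu_{i}$, $v_{i}=0$ on $\partial\Omega$. For the regular parts, testing the displayed equation with $w_{i}^{\eps}\in H_{0}^{1}(\Omega)$ and absorbing the lower‑order $k_{i}^{2}$‑terms --- legitimate because $k_{i}<k_{*}$, cf.\ \eqref{eq:principal-eigenvalue} --- together with the fact that near $\supp\mu_{i}$ one has $(\Delta+k_{i}^{2})w_{i}^{\eps}=\lambda_{i}$ while $w_{i}^{\eps}$ stays bounded there, yields $\norm{w_{i}^{\eps}}_{H_{0}^{1}(\Omega)}\le C$ uniformly. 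Passing to a subsequence, $w_{i}^{\eps}\rightharpoonup w_{i}$ weakly in $H_{0}^{1}(\Omega)$ (hence strongly in $L^{2}$ and a.e.) and $v_{i}^{\eps}\to v_{i}$, so that $u_{*,i}:=v_{i}+w_{i}\ge0$; the a.e.\ convergence preserves the segregation $u_{*,i}u_{*,j}=0$, and $\supp\mu_{i}\subset\Omega_{i}:=\{u_{*,i}>0\}$.

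It remains to pass to the limit in \eqref{eq:local-2phase}, and this is where the genuine obstacle lies: the convergence of the discontinuous coefficients $\chi_{\Omega_{i}^{\eps}}$. To guarantee $\chi_{\Omega_{i}^{\eps}}\to\chi_{\Omega_{i}}$ in $L_{\mathrm{loc}}^{1}(\Omega)$ --- equivalently, that the positivity sets do not collapse and that $\abs{\partial\Omega_{i}}=0$ --- one needs a nondegeneracy estimate (quadratic growth of $u_{i}^{\eps}$ away from its free boundary) that is uniform in $\eps$, combined with the control of $u_{i}^{\eps}=v_{i}^{\eps}+w_{i}^{\eps}$ near $\supp\mu_{i}$, where $u_{i}^{\eps}$ blows up; this is the usual delicate point in free‑boundary limit arguments and, together with the uniform $H^{1}$ bound on the $w_{i}^{\eps}$, is the technical heart of the proof. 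Granting it, for every test function $\phi\in C_{c}^{\infty}(\Omega\setminus\bigcup_{k\neq i,j}\overline{\Omega_{k}})$ one has $\supp\phi\cap\overline{\Omega_{k}^{\eps}}=\emptyset$ for $k\neq i,j$ and $\eps$ small, so \eqref{eq:local-2phase} for $\bfu^{\eps}$ may be tested against $\phi$: the second‑ and lower‑order terms converge by the weak‑$H^{1}$/strong‑$L^{2}$ convergence, the terms $\lambda_{i}\chi_{\Omega_{i}^{\eps}}\phi$ by $\chi_{\Omega_{i}^{\eps}}\to\chi_{\Omega_{i}}$, and the singular terms by $\int\mu_{i}^{\eps}\chi_{\Omega_{i}^{\eps}}\phi=\br{\mu_{i}^{\eps},\phi}\to\br{\mu_{i},\phi}=\int\mu_{i}\chi_{\Omega_{i}}\phi$. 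Hence $\bfu_{*}$ solves \eqref{eq:local-2phase} with $f_{i}=\mu_{i}-\lambda_{i}$, as claimed. As throughout this section, the case $m=2$ is cleaner: working with the single function $\tilde{u}^{\eps}:=u_{1}^{\eps}-u_{2}^{\eps}\in H_{0}^{1}(\Omega)$ and the equivalences \eqref{eq:equivalent-2phase}--\eqref{eq:global-minimizer-2phase} gives directly the sharper form \eqref{eq:main-2-phase}.
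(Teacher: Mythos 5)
Your proposal follows essentially the same route as the paper: mollify the measures to get $f_{i}^{\eps}=\mu_{i}^{\eps}-\lambda_{i}\in L^{\infty}(\Omega)$, invoke \eqref{eq:existence-global-minimizer} and Theorem~\ref{thm:local-properties} for each $\eps$, and extract a convergent subsequence of the resulting minimizers. The only difference is one of care rather than strategy: the delicate point you explicitly flag (and leave as ``granting it'') --- the convergence $\chi_{\Omega_{i}^{\eps}}\to\chi_{\Omega_{i}}$ needed to identify the limit equation --- is exactly the step the paper's own proof passes over silently with a one-line appeal to weak-$*$ compactness, so your version is, if anything, the more honest account of where the work lies.
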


In \cite[Proposition~3.6]{KSS23Minimization} it is shown 
 that there exists a minimizer $v_{*,i}$ of the functional $\mJ_{k_{i}}$ in $\mK_{1}(\Omega)$. In this case, by using the Euler-Lagrange equation, one can prove that such a minimizer $v_{*,i}$ of the functional $\mJ_{k_{i}}$ is unique in $\mK_{1}(\Omega)$. 
This implies that\footnote{Given $\bfv_{*}'=(v_{*,1}',\cdots,v_{*,m}') \in \mK_{m}(\Omega)$ with $\bfv_{*}'\neq\bfv_{*}$, i.e. $v_{*,j}'\neq v_{*,j}$ for some $j$, one sees that $v_{*,j}'$ must not the minimizer of $\mJ_{k_{j}}$, therefore $\mJ_{k_{j}}(v_{*,j}') > \mJ_{k_{j}}(v_{*,j})$, which implies $\mJ_{\bfk}(\bfv_{*}') > \mJ_{\bfk}(\bfv_{*})$, i.e. the minimizer of $\mJ_{\bfk}$ in $\mK_{m}(\Omega)$ is unique.}
\begin{equation}
\text{$\bfv_{*} = (v_{*,1},\cdots,v_{*,m})$ is the unique minimizer of the functional $\mJ_{\bfk}$ in $\mK_{m}(\Omega)$.} \label{eq:minimizer-non-orthogonal}
\end{equation}
However, at this point, we do not know whether $\mathbf{v}{*}$ is segregated (i.e., $v_{i}\cdot v_{j}=0$ for all $i\neq j$) or not. We can compare the supports of minimizers in \eqref{eq:minimizing-m-phase-problem} with $\text{supp}(v_{*,i})$ as presented in the following theorem.

\begin{thm}\label{thm:support-minimizer}
Let $\Omega$ be a bounded Lipschitz domain in $\mR^{n}$, let $k_{1}=\cdots=k_{m}=k$ with $0 \le k < k_{*}$ and let $f_{i} \in L^{\infty}(\Omega)$ for all $i=1,\cdots,m$. Then for each minimizer $\bfu_{*}=(u_{*,1},\cdots,u_{*,m})$ of the functional $\mJ_{\bfk}$ in $\mS_{m}(\Omega)$ one has  
\begin{equation}
\supp\,(u_{*},i) \subset \supp\,(v_{*},i) \quad \text{for all $i=1,\cdots,m$,}
\end{equation}
where $\bfv_{*}=(v_{*,1},\cdots,v_{*,m})$ is given in \eqref{eq:minimizer-non-orthogonal}. 
\end{thm}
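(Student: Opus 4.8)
The plan is to exploit the comparison between the (unique) minimizer $\bfv_*$ of $\mJ_\bfk$ over the larger set $\mK_m(\Omega)$ and a minimizer $\bfu_*$ over the smaller set $\mS_m(\Omega)\subset\mK_m(\Omega)$, using that all $k_i$ equal a common $k<k_*$. The key structural fact I would first record is that since $0\le k<k_*$, the functional $\mJ_k(\cdot)$ is convex on $\mK_1(\Omega)$ (the quadratic form $\norm{\nabla u}^2-k^2\norm{u}^2$ is positive-definite on $H^1_0(\Omega)$ by \eqref{eq:principal-eigenvalue}), so its minimizer $v_{*,i}$ over the convex set $\mK_1(\Omega)$ is unique and is characterized by the variational inequality
\[
\int_\Omega\bigl(\nabla v_{*,i}\cdot\nabla(w-v_{*,i}) - k^2 v_{*,i}(w-v_{*,i}) - f_i(w-v_{*,i})\bigr)\,\rmd x \ge 0 \quad\text{for all } w\in\mK_1(\Omega),
\]
equivalently $v_{*,i}$ solves the obstacle problem $\Delta v_{*,i}+k^2 v_{*,i} = -f_i$ in $\{v_{*,i}>0\}$, with $v_{*,i}\ge 0$, $\Delta v_{*,i}+k^2 v_{*,i}\le -f_i$ in $\Omega$ (weak sense). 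In particular $\supp\,(v_{*,i})$ is the noncontact set associated with obstacle $0$ and source $f_i$.

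Next I would set up the comparison. Fix $i$ and let $u=u_{*,i}$. Since $\bfu_*\in\mS_m(\Omega)\subset\mK_m(\Omega)$ and $\bfv_*$ is the unique minimizer over $\mK_m(\Omega)$, componentwise $\mJ_k(v_{*,i})\le\mJ_k(u_{*,i})$; but that inequality alone does not give support containment. Instead I would test the variational inequality for $v_{*,i}$ against a suitable competitor built from $u_{*,i}$, or — more promisingly — show directly that $u=u_{*,i}$ is itself a \emph{subsolution} of the same obstacle problem that $v_{*,i}$ solves, and then invoke a comparison principle for the obstacle problem to conclude $\{u>0\}\subset\{v_{*,i}>0\}$, hence $\supp\,(u_{*,i})\subset\supp\,(v_{*,i})$. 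To see $u$ is a subsolution: from Theorem~\ref{thm:local-properties} with all $k_i=k$, the difference $u_{*,i}-u_{*,j}$ satisfies $\Delta(u_{*,i}-u_{*,j})+k^2(u_{*,i}-u_{*,j}) = -f_i\chi_{\{u_{*,i}>0\}}+f_j\chi_{\{u_{*,j}>0\}}$ locally away from the other phases; summing appropriately, or arguing in $\Omega_i=\{u_{*,i}>0\}$ where all other $u_{*,j}$ vanish, gives $\Delta u_{*,i}+k^2 u_{*,i} = -f_i$ in $\Omega_i$, while globally $\Delta u_{*,i}+k^2 u_{*,i}\le -f_i\chi_{\Omega_i}\le \ldots$; one must check the sign of the distributional measure $\Delta u_{*,i}+k^2u_{*,i}+f_i$ on $\partial\Omega_i$ and on the region where $u_{*,i}=0$, which is where I expect to use that a minimizer over $\mS_m$ cannot be ``pushed up'' without increasing energy or violating segregation. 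Concretely, testing $\mJ_\bfk$ at $\bfu_*$ against $(u_{*,1},\ldots,u_{*,i}+\epsilon\varphi,\ldots,u_{*,m})$ for $\varphi\ge0$ with $\supp\,\varphi$ disjoint from $\bigcup_{j\ne i}\Omega_j$ shows $\int(\nabla u_{*,i}\cdot\nabla\varphi - k^2u_{*,i}\varphi - f_i\varphi)\ge0$, i.e. $\Delta u_{*,i}+k^2u_{*,i}+f_i\le 0$ as a distribution on $\Omega\setminus\bigcup_{j\ne i}\overline{\Omega_j}$; on the remaining small set $\bigcup_{j\ne i}\overline{\Omega_j}$ one has $u_{*,i}=0$ by segregation, which should let me upgrade this to a global subsolution inequality (this is the delicate boundary bookkeeping).

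With $u=u_{*,i}\ge 0$ a global subsolution of $\Delta w+k^2w+f_i\le 0$, $w\ge0$, and $v_{*,i}$ the least supersolution (the obstacle-problem solution) with the same $f_i$, the comparison principle for the operator $\Delta+k^2$ on $H^1_0(\Omega)$ — valid precisely because $k<k_*$ so the operator satisfies the maximum principle — yields $u_{*,i}\le v_{*,i}$ a.e., hence $\{u_{*,i}>0\}\subset\{v_{*,i}>0\}$ and therefore $\supp\,(u_{*,i})\subset\supp\,(v_{*,i})$. I expect the main obstacle to be exactly the step of promoting the ``local'' subsolution property (valid only in $\Omega\setminus\bigcup_{j\ne i}\overline{\Omega_j}$, coming from Theorem~\ref{thm:local-properties} and from one-sided perturbations) to a genuine global distributional inequality on all of $\Omega$; this requires controlling the sign of the singular part of $\Delta u_{*,i}+k^2u_{*,i}+f_i$ concentrated on $\partial\Omega_i\cap\bigcup_{j\ne i}\partial\Omega_j$, i.e. ruling out that the free boundary carries a negative-mass measure there — which should follow since on that set $u_{*,i}=0$ and $u_{*,i}\ge0$, forcing the distributional Laplacian to be $\ge0$ there, consistent with the subsolution inequality once $f_i$ is accounted for, but the careful argument (perhaps via the equivalence with a variational inequality on $\mK_m(\Omega)$, or via the penalization/segregation limit as in \cite{CTV05SegregationProblem,AS16MultiPhaseQD}) is where the work lies.
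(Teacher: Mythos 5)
There is a genuine gap, and it sits exactly at the comparison step that your whole argument funnels into; in fact there are two independent problems there. First, the direction of the comparison is backwards. The minimizer $v_{*,i}$ of $\mJ_{k}$ over the convex set $\mK_{1}(\Omega)$ is the \emph{least} nonnegative supersolution of $-\Delta w-k^{2}w\ge f_{i}$ in $H_{0}^{1}(\Omega)$, and your one-sided perturbation $u_{*,i}\mapsto u_{*,i}+\epsilon\varphi$ with $\varphi\ge 0$ produces precisely the \emph{same} one-sided inequality $\Delta u_{*,i}+k^{2}u_{*,i}+f_{i}\le 0$ (locally). If you could upgrade that to all of $\Omega$, minimality of $v_{*,i}$ among supersolutions would give $v_{*,i}\le u_{*,i}$ --- the \emph{reverse} of the inclusion you want. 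To obtain $u_{*,i}\le v_{*,i}$ by applying the maximum principle to $u_{*,i}-v_{*,i}$ you would instead need $u_{*,i}$ to be a global \emph{sub}solution, $\Delta u_{*,i}+k^{2}u_{*,i}+f_{i}\ge 0$; but on any open subset of $\{u_{*,i}=0\}$ the left-hand side equals $f_{i}$, which may be negative, so that inequality fails too. Your write-up slides between the two sign conventions, and neither one closes the argument. Second, the global one-sided inequality you hope to establish is genuinely false, not just delicate: at a two-phase contact point of $\partial\Omega_{i}\cap\partial\Omega_{j}$ where $u_{*,i}-u_{*,j}$ changes sign with nonvanishing gradient, $\Delta u_{*,i}=\Delta(u_{*,i}-u_{*,j})_{+}$ carries a \emph{nonnegative} singular measure (the jump of the normal derivative across the free boundary), which makes $\Delta u_{*,i}+k^{2}u_{*,i}+f_{i}$ a positive measure there and destroys the supersolution inequality. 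A sanity check that something must be wrong: if $u_{*,i}$ were a global supersolution, the least-supersolution characterization would give $v_{*,i}\le u_{*,i}$, which together with the theorem's conclusion would force $u_{*,i}=v_{*,i}$, i.e.\ $\bfv_{*}$ would automatically be segregated --- exactly what the paper states it cannot assert.

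The paper's proof bypasses PDE comparison entirely and is a short lattice argument. Because all $k_{i}$ equal $k$, the pointwise identities for $\min$ and $\max$ give $\mJ_{\bfk}(\min\{\bfu_{*},\bfv_{*}\})+\mJ_{\bfk}(\max\{\bfu_{*},\bfv_{*}\})=\mJ_{\bfk}(\bfu_{*})+\mJ_{\bfk}(\bfv_{*})$. Since $\min\{\bfu_{*},\bfv_{*}\}\in\mS_{m}(\Omega)$, minimality of $\bfu_{*}$ gives $\mJ_{\bfk}(\bfu_{*})\le\mJ_{\bfk}(\min\{\bfu_{*},\bfv_{*}\})$, hence $\mJ_{\bfk}(\max\{\bfu_{*},\bfv_{*}\})\le\mJ_{\bfk}(\bfv_{*})$, and the uniqueness of $\bfv_{*}$ in $\mK_{m}(\Omega)$ from \eqref{eq:minimizer-non-orthogonal} forces $\max\{\bfu_{*},\bfv_{*}\}=\bfv_{*}$, i.e.\ $u_{*,i}\le v_{*,i}$ and the support inclusion follows. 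This min/max competitor is also the standard mechanism behind the ``least supersolution'' characterization you invoke, so your closing parenthetical about arguing ``via the equivalence with a variational inequality'' points toward the right repair; but the main line of your proposal, as written, does not go through.
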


Suppose that for each $i=1,2,\cdots,m$ we are given the non-negative distribution $\mu_{i}$ which is sufficiently concentrated near $x_{i}\in\mR^{n}$ in the sense of 
\begin{equation*}
\text{$\mu=0$ outside $B_{\epsilon_{i}}(x_{i})$}
\end{equation*}
and 
\begin{equation*}
\mu_{i}(\mR^{n}) > C_{n} \lambda_{i} \epsilon_{i} ,\qquad C_{n} \ge 2^{n} \frac{(3\pi)^{\frac{n}{2}}}{\Gamma(1+\frac{n}{2})} \frac{J_{\frac{n}{2}}(j_{\frac{n-2}{2},1})}{J_{\frac{n}{2}}(j_{\frac{n-2}{2},1}/3)}
\end{equation*}
for some constants $\epsilon_{i}>0$ and $\lambda_{i}>0$, 
where $\Gamma$ is the standard Gamma function, $J_{\alpha}$ is the Bessel functions of order $\alpha$ of the first kind, and $j_{\alpha,m}$ is the $m^{\rm th}$ positive root of $J_{\alpha}$. We now fix a parameter $0 < \beta < j_{\frac{n-2}{2},1}$. By using \cite[Theorem~7.6]{KSS23Minimization}, for each $k>0$ satisfying 
\begin{equation*}
k < \min \left\{ \frac{1}{3} , \left( C_{n,\beta}\frac{\lambda_{i}}{\mu(\mR^{n})} \right)^{\frac{1}{n}} \right\} ,\qquad C_{n,\beta} = \left( \frac{4\pi}{3} \right)^{\frac{n}{2}} \beta^{\frac{n}{2}} J_{\frac{n}{2}}(\beta) \frac{J_{\frac{n}{2}}(\frac{2}{3}j_{\frac{n-2}{2},1})}{J_{\frac{n}{2}}(j_{\frac{n-1}{2},1})},
\end{equation*}
there exists a non-negative function $v_{*,i} \in C_{\rm loc}^{0,1}(\mR^{n})$ such that 
\begin{equation*}
(\Delta + k^{2})v_{*,i} = - \tilde{\mu}_{i} + \lambda_{i}\chi_{Q_{i}} ,\quad Q_{i} = \{ v_{*,i}>0 \} ,\quad \tilde{\mu}_{i}=\mu_{i}*\phi_{2\epsilon_{i}} 
\end{equation*}
with the support condition 
\begin{equation}
\supp\,(\tilde{\mu}_{i}) \subset Q_{i} ,\quad \overline{Q_{i}} \subset B_{\beta k^{-1}}(x_{i}) , \label{eq:support-condition}
\end{equation}
where
\begin{equation*}
\phi_{2\epsilon} = (c_{n,k,2\epsilon}^{\rm MVT})^{-1}\chi_{B_{2\epsilon}} ,\quad c_{n,k,2\epsilon}^{\rm MVT} = (2\pi)^{\frac{n}{2}} k^{-\frac{n}{2}} (2\epsilon)^{\frac{n}{2}} J_{\frac{n}{2}}(2k\epsilon).
\end{equation*}

Here, $Q_{i}$ is a (1-phase) $k$-quadrature domain corresponding to $\mu_{i}$ and positive constant $\lambda_{i}$. In addition, such $v_{*,i}$ is also the unique minimizer of $\mJ_{k}$ with $f_{i}=\mu_{i}-\lambda_{i}\chi_{Q_{i}}$ in $\mK_{1}(\Omega)$. Hence we see that $\bfv_{*}=(v_{*,1},\cdots,v_{*,m})$ is the unique minimizer of the functional $\mJ_{\bfk}$ with $\bfk=(k,k,\cdots,k)$ and $f_{i}=\mu_{i}-\lambda_{i}\chi_{Q_{i}}$ in $\mK_{m}(\Omega)$. When $\mu$ is bounded, the above also holds true by replacing $\tilde{\mu}$ with  $\mu$. 

It is important to notice that $Q_{i}$ may not be disjoint even in the case when $\supp\,(\mu_{i})\cap\supp\,(\mu_{j})=\emptyset$ for all $i\neq j$. In this case, we need to shrink $Q_{i}$ into $\{u_{*,i}>0\}$ in the sense of Theorem~\ref{thm:support-minimizer}.

An essential component of the theory of quadrature domains involves the relationship:
\begin{equation}
\text{supp}(\mu_{i}) \subset \{u_{*,i} > 0\}. \label{eq:support-condition-multiphase}
\end{equation}
From Theorem~\ref{thm:support-minimizer}, it becomes evident that a prerequisite for \eqref{eq:support-condition-multiphase} is expressed by:
\begin{equation*}
\supp\,(\mu_{i}) \subset Q_{i}:=\{v_{*,i}>0\}.
\end{equation*}
In this scenario, for each $i=1,\ldots,n$, it can be observed that $Q_{i}$ represents a one-phase $k$-quadrature domain corresponding to $\mu_{i}$. This observation prompts the formulation of sufficient conditions for 
\eqref{eq:support-condition-multiphase} in terms of 1-phase $k$-quadrature domains.

The following theorem exhibits some sufficient condition to guarantee the following weaker support condition: 
\begin{equation}
\supp\,(\mu_{i}) \subset \supp\,(u_{*,i}), \label{eq:support-condition-0}
\end{equation}
where $\bfu_{*}=(u_{*,1},\cdots,u_{*,m})$ is a minimizer of the functional $\mJ_{\bfk}$ in $\mS_{m}(\Omega)$: 

\begin{thm}\label{thm:m-phase-quadrature-domains}
Let $\Omega$ be a bounded Lipschitz domain and let $0<k<k_{*}$. For each $i=1,\cdots,m$, let $\mu_{i} \in L^{\infty}(\mR^{n})$ with $\supp\,(\mu_{i}) \subset \Omega$ and let $Q_{i}=\left\{v_{*,i}>0\right\}$, where $v_{*,i}$ is the unique minimizer of $\mJ_{k_{i}}$ in $\mK_{1}(\Omega)$ with $k_{i}=k$. Suppose that 
\begin{equation}
\overline{Q_{i}} \cap \supp\,(\mu_{j}) = \emptyset \quad \text{for all $i\neq j$,} \label{eq:disjoint-condition}
\end{equation}
and let $\bfk=(k,k,\cdots,k)$ and $f_{i} = \mu_{i}-\lambda_{i}$ for $i=1,\cdots,m$. If 
\begin{equation}
\inf_{x \in U_{i}}\mu_{i}(x) > \lambda_{i} \quad \text{holds for all $i=1,\cdots,m$,} \label{eq:non-degenerate-condition}
\end{equation}
for some open sets $U_{i} \subset \supp\,(\mu_{i})$, then all minimizers of $\mJ_{\bfk}$ in $\mS_{m}(\Omega)$ satisfy the support condition  
\begin{equation}
\overline{U_{i}} \subset \supp\,(u_{*,i}) \quad \text{for all $i=1,\cdots,m$}. \label{eq:support-condition-1}
\end{equation}
\end{thm}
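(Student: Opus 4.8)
The plan is to argue by contradiction, exploiting the variational characterization of $\bfu_*$ together with the uniqueness of the single-phase minimizers $v_{*,i}$ and the disjointness hypothesis \eqref{eq:disjoint-condition}. First I would observe that, because $\supp\,(\mu_i)\cap\overline{Q_j}=\emptyset$ for $i\neq j$ and because Theorem~\ref{thm:support-minimizer} gives $\supp\,(u_{*,i})\subset\supp\,(v_{*,i})=\overline{Q_i}$, the supports of the components of any segregated minimizer are already separated from the ``wrong'' sources: $\supp\,(\mu_j)$ meets at most $\supp\,(u_{*,j})$. Hence on a neighborhood of each $\supp\,(\mu_i)$ the segregation constraint $u_{*,i}\cdot u_{*,j}=0$ carries no force coming from the other sources, and one can hope to compare $\bfu_*$ locally with the unconstrained single-phase minimizer.

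The key step is a local comparison/perturbation argument near $\overline{U_i}$. Suppose \eqref{eq:support-condition-1} fails, so there is a point $x_0\in\overline{U_i}$ with $x_0\notin\supp\,(u_{*,i})$; then $u_{*,i}\equiv 0$ on a ball $B=B_r(x_0)$. By \eqref{eq:disjoint-condition} we may shrink $r$ so that $B$ is disjoint from $\overline{Q_j}$, hence from $\supp\,(u_{*,j})$, for every $j\neq i$; thus \emph{all} components $u_{*,j}$ with $j\neq i$ vanish on $B$ as well (for $j$ with $\overline{Q_j}\cap B=\emptyset$), so that on $B$ the only nontrivial competitor to modify is $u_{*,i}$, and any nonnegative perturbation of $u_{*,i}$ supported in $B$ keeps us inside $\mS_m(\Omega)$. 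Now I would insert a small nonnegative bump $\varphi\in C_c^\infty(B)$, $\varphi\ge 0$, and compute
\begin{equation*}
\mJ_{\bfk}(\bfu_*+t\varphi e_i) - \mJ_{\bfk}(\bfu_*) = t\Big(2\int_B(\nabla u_{*,i}\cdot\nabla\varphi - k^2 u_{*,i}\varphi - f_i\varphi)\,\rmd x\Big) + t^2\int_B(\abs{\nabla\varphi}^2 - k^2\varphi^2)\,\rmd x.
\end{equation*}
Since $u_{*,i}\equiv 0$ on $B$ and $f_i=\mu_i-\lambda_i$, the linear term equals $-2t\int_B(\mu_i-\lambda_i)\varphi\,\rmd x$, which by \eqref{eq:non-degenerate-condition} (choosing $\varphi$ concentrated in $U_i$ near $x_0$, legitimate because $x_0\in\overline{U_i}$) is strictly negative for small $t>0$; meanwhile the quadratic term is controlled because $k<k_*$ so that $\int_B(\abs{\nabla\varphi}^2-k^2\varphi^2)\ge (1-k^2/k_*^2)\norm{\nabla\varphi}_{L^2}^2\ge 0$. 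Hence $\mJ_{\bfk}(\bfu_*+t\varphi e_i)<\mJ_{\bfk}(\bfu_*)$ for small $t>0$, contradicting minimality; therefore $\overline{U_i}\subset\supp\,(u_{*,i})$ for every $i$.

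The main obstacle I anticipate is the step where I claimed that on the small ball $B$ around $x_0$ all competing components vanish: a priori $x_0\in\overline{U_i}\subset\supp\,(\mu_i)\subset\overline{Q_i}$, and \eqref{eq:disjoint-condition} only separates $\overline{Q_i}$ from $\supp\,(\mu_j)$, not from $\overline{Q_j}$ or $\supp\,(u_{*,j})$; so it is not immediate that the perturbed tuple stays segregated. Resolving this requires a more careful argument — either one shows that at a point where $u_{*,i}$ vanishes while $x_0\in\supp\,(\mu_i)$ one already derives a contradiction with $\eqref{eq:classical-local-2-phase}$/\eqref{eq:local-2phase} (the local PDE would force a source term with no density to absorb it), or, in the spirit of the single-phase proof in \cite[Theorem~7.6]{KSS23Minimization}, one uses the non-degeneracy \eqref{eq:non-degenerate-condition} directly to bound below the mass that $u_{*,i}$ must carry, via a comparison with an explicit Bessel-type subsolution on a ball, ruling out $x_0\notin\supp\,(u_{*,i})$. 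I would push the contradiction-plus-local-PDE route first, since \eqref{eq:local-2phase} is already available and the set $\Omega\setminus\bigcup_{k\neq i,j}\overline{\Omega_k}$ contains a neighborhood of $\supp\,(\mu_i)$ by \eqref{eq:disjoint-condition} and Theorem~\ref{thm:support-minimizer}; the bump perturbation above then simply becomes the rigorous way of extracting the contradiction from the equation.
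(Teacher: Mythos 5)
Your argument is correct, and your self-identified ``main obstacle'' is not actually an obstacle. Condition \eqref{eq:disjoint-condition} is stated for all ordered pairs $i\neq j$, so it gives $\overline{Q_{j}}\cap\supp\,(\mu_{i})=\emptyset$ for every $j\neq i$; since $x_{0}\in\overline{U_{i}}\subset\supp\,(\mu_{i})$ and the finitely many sets $\overline{Q_{j}}$ are closed, a single small ball $B=B_{r}(x_{0})$ avoids all of them, and Theorem~\ref{thm:support-minimizer} then forces $u_{*,j}\equiv 0$ on $B$ for every $j\neq i$. Hence the perturbed tuple $\bfu_{*}+t\varphi e_{i}$ stays in $\mS_{m}(\Omega)$, and your first-variation computation (linear term $-2t\int(\mu_{i}-\lambda_{i})\varphi<0$ dominating the $O(t^{2})$ term for small $t>0$) closes the contradiction. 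So the first two paragraphs of your proposal are already a complete proof; the hedging in the third is unnecessary, and the detour through the local PDE \eqref{eq:local-2phase} is not needed.

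Compared with the paper: the overall strategy is the same (contradict minimality by inserting a nonnegative perturbation into the $i$-th slot on a ball, disjoint from all other components, where $\mu_{i}>\lambda_{i}$), but the perturbations differ. The paper takes as perturbation the one-phase minimizer $\tilde{v}_{*,i_{0}}$ of a localized functional with source $\nu_{i_{0}}=\mu_{i_{0}}\chi_{B_{r}(z_{0})}$, invoking \cite[Proposition~7.4]{KSS23Minimization} and Bessel-function monotonicity to confine $\supp\,(\tilde{v}_{*,i_{0}})$ to $B_{R}(z_{0})$ and to guarantee $\mJ_{k,\nu_{i_{0}},\lambda_{i_{0}}}(\tilde{v}_{*,i_{0}})<0$; the additivity of the energy then yields the contradiction. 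Your bump $t\varphi$ trivially has the required support and only needs the sign of the first variation, so it avoids that external machinery entirely. Both arguments rely on the same two inputs, Theorem~\ref{thm:support-minimizer} and \eqref{eq:disjoint-condition}, to keep the competitor segregated; yours is the more elementary realization.
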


\begin{rem*}
If $\supp\,(\mu_{i})=\overline{{\rm int}\,(\supp\,(\mu_{i}))}$, then we can guarantee \eqref{eq:support-condition-0} by choosing $U_{i}={\rm int}\,(\supp\,(\mu_{i}))$. 
\end{rem*}

\addtocontents{toc}{\SkipTocEntry}
\subsection{Proofs of the theorems}

By modifying the ideas in \cite[Proposition~1]{AS16MultiPhaseQD}, we now prove Theorem~\ref{thm:local-properties}. 

\begin{proof}[Proof of Theorem~\ref{thm:local-properties}]
Let $\psi \in C_{c}^{\infty}$ be non-negative such that $\supp\,(\psi) \subset \Omega\setminus \bigcup_{k\neq i,j} \overline{\Omega_{k}}$. Given any $\epsilon>0$, we define 
\begin{equation*}
Q_{\epsilon} := \left\{ x\in\Omega : u_{*,i}-u_{*,j} \le \epsilon \psi \right\}. 
\end{equation*}
If we define $\bfz := (z_{1},z_{2},\cdots,z_{m})$ as 
\begin{equation*}
\text{$z_{\ell} := u_{*,\ell}$ if $\ell\neq i,j$}, \quad z_{i}:= (u_{*,i}-u_{*,j}-\epsilon\psi)_{+} \quad \text{and} \quad z_{j} := (u_{*,i}-u_{*,j}-\epsilon\psi)_{-},
\end{equation*}
then $\bfz \in \mS_{m}(\Omega)$. Since $\left. u_{*,j} \right|_{\Omega\setminus Q_{\epsilon}}=0$, $u_{*,i} = (u_{*,i}-u_{*,j})_{+}$ and $u_{*,i} = (u_{*,i}-u_{*,j})_{-}$, we obtain 
\begin{equation}
0 \le \mJ_{\bfk}(\bfz) - \mJ_{\bfk}(\bfu_{*}) = I_{1} + I_{2}, \label{eq:local-decomposition}
\end{equation}
where 
\begin{equation*}
\begin{aligned}
I_{1} & = \int_{\Omega}  \left( \abs{\nabla(u_{*,i}-u_{*,j}-\epsilon\psi)_{+}}^{2} - \abs{\nabla u_{*,i}}^{2} \right) + \left( \abs{\nabla(u_{*,i}-u_{*,j}-\epsilon\psi)_{-}}^{2} - \abs{\nabla u_{*,j}}^{2} \right) \, \rmd x \\
& \quad + 2 \int_{\Omega} f_{i}(u_{*,i}-(u_{*,i}-u_{*,j}-\epsilon\psi)_{+}) + f_{j}(u_{*,j}-(u_{*,i}-u_{*,j}-\epsilon\psi)_{-}) \, \rmd x 
\end{aligned}
\end{equation*}
and 
\begin{equation*}
I_{2} = - \int_{\Omega} k_{i}^{2} \left( \abs{(u_{*,i}-u_{*,j}-\epsilon\psi)_{+}}^{2} - \abs{u_{*,i}}^{2} \right) + k_{j}^{2} \left( \abs{(u_{*,i}-u_{*,j}-\epsilon\psi)_{-}}^{2} - \abs{u_{*,j}}^{2} \right) \, \rmd x
\end{equation*}
Following the exactly same arguments as in \cite[Proposition~1]{AS16MultiPhaseQD}, one can show that 
\begin{equation}
\begin{aligned}
I_{1} & \le -2\epsilon \int_{\Omega\setminus \bigcup_{k\neq i,j} \overline{\Omega_{k}}} \nabla (u_{*,i}-u_{*,j}) \cdot \nabla \psi + \epsilon^{2} \int_{\Omega\setminus \bigcup_{k\neq i,j} \overline{\Omega_{k}}} \abs{\nabla\psi}^{2} \, \rmd x \\
& \quad + 2\epsilon \int_{\Omega\setminus \bigcup_{k\neq i,j} \overline{\Omega_{k}}} f_{i} \chi_{\{u_{*,i}>u_{*,j}\}} \psi \, \rmd x - 2\epsilon \int_{\Omega\setminus \bigcup_{k\neq i,j} \overline{\Omega_{k}}} f_{j} \chi_{\{u_{*,i}<u_{*,j}\}} \psi \, \rmd x + o(\epsilon)
\end{aligned} \label{eq:local-I1}
\end{equation}
On the other hand, we see that 
\begin{equation*}
\begin{aligned}
I_{2} & = - \int_{\Omega\setminus Q_{\epsilon}} k_{i}^{2} \left( \abs{u_{*,i}-(u_{*,j}+\epsilon\psi)}^{2} - \abs{u_{*,i}}^{2} \right) \, \rmd x \\
&\quad + \int_{Q_{\epsilon}} k_{i}^{2}\abs{u_{*,i}}^{2} \, \rmd x - \int_{Q_{\epsilon}} k_{j}^{2} \left( \abs{u_{*,j}-(u_{*,i}-\epsilon\psi)}^{2} - \abs{u_{*,j}}^{2} \right) \, \rmd x \\
& = 2\epsilon \int_{\Omega\setminus Q_{\epsilon}} k_{i}^{2} u_{*.i}\psi \, \rmd x - 2\epsilon \int_{Q_{\epsilon}} k_{j}^{2}u_{*,j}\psi \, \rmd x \\
& \quad + \int_{Q_{\epsilon}} k_{i}^{2}\abs{u_{*,i}}^{2} \, \rmd x - \int_{Q_{\epsilon}} k_{j}^{2}\abs{u_{*,i}-\epsilon\psi}^{2} \, \rmd x - \int_{\Omega\setminus Q_{\epsilon}} k_{i}^{2} \abs{u_{*,j}+\epsilon\psi}^{2} \, \rmd x \\
& = 2\epsilon \int_{\Omega\setminus \bigcup_{k\neq i,j} \overline{\Omega_{k}}} (k_{i}^{2}\chi_{\Omega\setminus Q_{\epsilon}} + k_{j}^{2}\chi_{Q_{\epsilon}}) u_{*.i}\psi \, \rmd x - 2\epsilon \int_{\Omega\setminus \bigcup_{k\neq i,j} \overline{\Omega_{k}}} (k_{j}^{2}\chi_{Q_{\epsilon}} + k_{i}^{2}\chi_{\Omega\setminus Q_{\epsilon}})u_{*,j}\psi \, \rmd x \\
& \quad + (k_{i}^{2}-k_{j}^{2}) \int_{Q_{\epsilon}} \abs{u_{*,i}}^{2} \, \rmd x + \epsilon^{2}(k_{i}^{2}+k_{j}^{2}) \int_{Q_{\epsilon}} \abs{\psi}^{2} \, \rmd x 
\end{aligned}
\end{equation*}
Since $u_{*,i}\cdot u_{*,j}=0$, then 
\begin{equation*}
\int_{Q_{\epsilon}} \abs{u_{*,i}}^{2} \, \rmd x = \int_{x\in\Omega, u_{*,i}(x)\le\epsilon\psi(x)} \abs{u_{*,i}}^{2} \, \rmd x \le \epsilon^{2} \int_{\Omega\setminus \bigcup_{k\neq i,j} \overline{\Omega_{k}}} \abs{\psi}^{2} \, \rmd x,
\end{equation*}
as well as 
\begin{equation*}
\begin{aligned}
& (k_{i}^{2}\chi_{\Omega\setminus Q_{\epsilon}} + k_{j}^{2}\chi_{Q_{\epsilon}}) u_{*,i} = (k_{i}^{2}\chi_{\{u_{*,i}>\epsilon\psi\}} + k_{j}^{2}\chi_{\{u_{*,i}\le\epsilon\psi\}}) u_{*,i} \\
& (k_{j}^{2}\chi_{Q_{\epsilon}} + k_{i}^{2}\chi_{\Omega\setminus Q_{\epsilon}}) u_{*,j} = k_{j}^{2} u_{*,j}.
\end{aligned}
\end{equation*}
From this, we reach  
\begin{equation}
\begin{aligned}
I_{2} &\le 2\epsilon \int_{\Omega\setminus \bigcup_{k\neq i,j} \overline{\Omega_{k}}} (k_{i}^{2}\chi_{\{u_{*,i}>\epsilon\psi\}} + k_{j}^{2}\chi_{\{u_{*,i}\le\epsilon\psi\}}) u_{*,i}\psi \, \rmd x \\
& \quad - 2\epsilon \int_{\Omega\setminus \bigcup_{k\neq i,j} \overline{\Omega_{k}}} k_{j}^{2} u_{*,j}\psi \, \rmd x + C \epsilon^{2} \int_{\Omega\setminus \bigcup_{k\neq i,j} \overline{\Omega_{k}}} \abs{\psi}^{2} \, \rmd x.
\end{aligned} \label{eq:local-I2}
\end{equation}
Combining \eqref{eq:local-decomposition}, \eqref{eq:local-I1} and \eqref{eq:local-I2}, we divide the resulting inequality by $2\epsilon$ and then taking the limit $\epsilon\rightarrow 0$, we reach 
\begin{equation*}
\Delta(u_{*,i}-u_{*,j}) + k_{i}^{2}u_{*,i} - k_{j}^{2}u_{*,j} \le -f_{i}\chi_{\{u_{*,i}>0\}} + f_{j}\chi_{\{u_{*,j}>0\}} \quad \text{in $\Omega \setminus \bigcup_{k\neq i,j} \overline{\Omega_{k}}$}.
\end{equation*}
Finally, by interchanging the role of $i$ and $j$ we conclude \eqref{eq:local-2phase}. 
\end{proof}

We are now in position to prove Theorem~\ref{thm:local-properties-unbounded-f}, which can be done similarly to \cite[Theorem~5]{AS16MultiPhaseQD}. 

\begin{proof}[Proof of Theorem~\ref{thm:local-properties-unbounded-f}]
For each $\mu_{i}$, we choose the sequence $\mu_{i}^{n} \in C_{c}^{\infty}(\Omega)$ such that $\mu_{i}^{n} \rightarrow \mu_{i}$ in $\mE'(\Omega)$. We choose $f_{i}^{n} = \mu_{i}^{n} - \lambda_{i}$, and it is clear that $f_{i}^{n}\rightarrow f_{i} := \mu_{i} - \lambda_{i}$ in $\mE'(\Omega)$. We consider the functional 
\begin{equation*}
\mJ_{\bfk}^{n}(\bfu) = \sum_{i=1}^{m} \mJ_{k_{i}}^{n}(u_{i}) ,\quad \mJ_{k_{i}}^{n}(u_{i}) = \int_{\Omega} \left( \abs{\nabla u_{i}}^{2} - k^{2}\abs{u_{i}}^{2} - 2f_{i}^{n}u_{i} \right) \, \rmd x. 
\end{equation*}
By \eqref{eq:existence-global-minimizer}, for each $n\in\mN$ there exists a minimizer $\bfu_{*}^{n}$ of the functional $\mJ_{\bfk}^{n}$ in $\mS_{m}(\Omega)$. By using Theorem~\ref{thm:local-properties}, such minimizer satisfies 
\begin{equation*} 
\Delta(u_{*,i}-u_{*,j}) + k_{i}^{2}u_{*,i} - k_{j}^{2}u_{*,j} = -f_{i}^{n}\chi_{\{u_{*,i}>0\}} + f_{j}^{n}\chi_{\{u_{*,j}>0\}} \quad \text{in $\Omega \setminus \bigcup_{k\neq i,j} \overline{\Omega_{k}}$}. 
\end{equation*} 
Since the support of the minimizers $\bfu_{*}^{n}$ remain in a compact set $\overline{\Omega} \times \cdots \times \overline{\Omega}$, there exists a subsequence which is weak-$*$ convergent as distributions to a limit $\bfu_{*}$, which satisfies \eqref{eq:local-2phase} with $f_{i}=\mu_{i}-\lambda_{i}$. 
\end{proof}

For later convenience, we introduce the notation 
\begin{equation*}
\begin{aligned}
\min\{\bfu,\bfv\} &:= (\min\{u_{1},v_{1}\} , \cdots , \min\{u_{m},v_{m}\}), \\
\max\{\bfu,\bfv\} &:= (\max\{u_{1},v_{1}\} , \cdots , \max\{u_{m},v_{m}\}). 
\end{aligned}
\end{equation*}
We now prove Theorem~\ref{thm:support-minimizer} by modifying the ideas in \cite[Theorem~1]{AS16MultiPhaseQD}. 

\begin{proof}[Proof of Theorem~\ref{thm:support-minimizer}]
Let $\bfu_{*}=(u_{*,1},\cdots,u_{*,m}) \in \mS_{m}(\Omega)$ be a minimizer of the problem \eqref{eq:existence-global-minimizer}. Since 
\begin{equation*}
\begin{aligned}
\min\{u_{*,i},v_{*,i}\}+\max\{u_{*,i},v_{*,i}\} &= u_{*,i} + v_{*,i} \\
\abs{\min\{u_{*,i},v_{*,i}\}}^{2} + \abs{\max\{u_{*,i},v_{*,i}\}}^{2} &= \abs{u_{*,i}}^{2} + \abs{v_{*,i}}^{2} \\
\int_{\Omega} \left( \abs{\nabla \min\{u_{*,i},v_{*,i}\}}^{2} + \abs{\nabla \max\{u_{*,i},v_{*,i}\}}^{2} \right) \, \rmd x &= \int_{\Omega} \left( \abs{\nabla u_{*,i}}^{2} + \abs{\nabla v_{*,i}}^{2} \right) \, \rmd x
\end{aligned}
\end{equation*}
and $k_{1}=k_{2}=\cdots=k_{m}\in[0,k_{*})$, then 
\begin{equation*}
\mJ_{\bfk}(\min\{\bfu_{*},\bfv_{*}\}) + \mJ_{\bfk}(\max\{\bfu_{*},\bfv_{*}\}) = \mJ_{\bfk}(\bfu_{*}) + \mJ_{\bfk}(\bfv_{*}).
\end{equation*}
Since  $\min\{\bfu_{*},\bfv_{*}\}\in\mS_{m}(\Omega)$, then 
\begin{equation*}
\mJ_{\bfk}(\bfu_{*}) \le \mJ_{\bfk}(\min\{\bfu_{*},\bfv_{*}\}).
\end{equation*}
Hence we reach 
\begin{equation*}
\mJ_{\bfk}(\max\{\bfu_{*},\bfv_{*}\}) \le \mJ_{\bfk}(\bfv_{*}).
\end{equation*}
Since $\max\{\bfu_{*},\bfv_{*}\} \in \mK_{m}(\Omega)$ and $\bfv_{*}$ is the unique minimizer of $\mJ_{\bfk}$ in $\mK_{m}(\Omega)$, then 
\begin{equation*}
\max\{\bfu_{*},\bfv_{*}\} = \bfv_{*},
\end{equation*}
which proves our lemma. 
\end{proof}

We are now in position to prove Theorem~\ref{thm:m-phase-quadrature-domains} by modifying the ideas in \cite[Theorem~7]{AS16MultiPhaseQD} or \cite[Theorem~5.1]{EPS11TwoPhaseQD}. 

\begin{proof}[Proof of Theorem~\ref{thm:m-phase-quadrature-domains}]
Let $\bfu_{*}=(u_{*,1},\cdots,u_{*,m})$ be a minimizer of $\mJ_{\bfk}$ in $\mS_{m}(\Omega)$ with $\bfk=(k,k,\cdots,k)$. The remaining task is to prove the support condition \eqref{eq:support-condition-1}. In order to do this, we only need to show 
\begin{equation*}
U_{i} \subset \supp\,(u_{*,i}). 
\end{equation*}

Suppose the contrary, assuming the existence of $i_{0}$ such that $U_{i_{0}} \setminus \supp\,(u_{*,i_{0}}) \neq \emptyset$. Let's fix $z_{0} \in U_{i_{0}} \setminus \supp\,(u_{*,i_{0}})$. According to Theorem~\ref{thm:support-minimizer}, we have $\supp\,(u_{*,i}) \subset \overline{Q_{i}}$ for all $i=1,\ldots,m$. Consequently, using \eqref{eq:disjoint-condition}, it is evident that $z_{0} \in U_{i_{0}} \setminus \tilde{\Omega}$, where $\tilde{\Omega}=\bigcup_{i=1}^{m} \supp\,(u_{*,i})$. Since $\tilde{\Omega}$ is compact and $U_{i_{0}}$ is open, we can find $0<R<\beta k^{-1}$ with $0 < \beta < j_{\frac{n-2}{2},1}$ such that $\overline{B_{R}(z_{0})} \cap \tilde{\Omega} =\emptyset$ and $B_{R}(z_{0})\subset U_{i_{0}}$.

Let $0 < \varepsilon < M$ be such that 
\begin{equation*}
\max_{i=1,\cdots,m} \norm{\mu_{i}}_{L^{\infty}(\mR^{n})} \le M ,\quad \min_{i=1,\cdots,m} \lambda_{i} \ge \varepsilon.
\end{equation*}
Let $0<r<R$ be a constant to be determined later, and we define 
\begin{equation*}
\nu_{i_{0}} := \mu_{i_{0}} \chi_{B_{r}(z_0)}. 
\end{equation*}
It is easy to see that 
\begin{equation*}
a_{0} \chi_{B_{r}(z_0)} \le \nu_{i_{0}} \le M\chi_{B_{r}(z_0)} ,\quad a_{0} := \inf_{x \in \supp\,(\mu_{i})}\mu_{i}(x).
\end{equation*}
Since the mapping $t \mapsto t^{\frac{n}{2}} J_{\frac{n}{2}}(kt)$ is monotone increasing on $(0,\beta k^{-1})$, we can choose $r>0$ sufficiently small so that 
\begin{equation}
\frac{\varepsilon}{M} > \frac{r^{\frac{n}{2}} J_{\frac{n}{2}}(kr)}{R^{\frac{n}{2}} J_{\frac{n}{2}}(kR)} \label{eq:choice-r}
\end{equation}
and hence 
\begin{equation*}
R' := \max \left\{ \rho \in (r,\beta k^{-1}] : \frac{\varepsilon}{M} - \frac{r^{\frac{n}{2}} J_{\frac{n}{2}}(kr)}{\rho^{\frac{n}{2}} J_{\frac{n}{2}}(k\rho)} \le 0 \right\} < R.
\end{equation*}
The sufficient condition of \cite[Proposition~7.4]{KSS23Minimization} can be verified by \eqref{eq:non-degenerate-condition} and \eqref{eq:choice-r}, and one sees that 
\begin{equation}
\supp\,(\tilde{v}_{*,i_{0}}) \subset \overline{B_{R'}(z_{0})} \subset B_{R}(z_{0}) ,\quad B_{r_{0}} \subset \supp\,(\tilde{v}_{*,i_{0}}), \label{eq:1-phase-QD}
\end{equation}
where $\tilde{v}_{*,i_{0}}$ is the unique minimizer to the functional 
\begin{equation*}
v \mapsto \int_{B_{\beta k^{-1}}(z_{0})} \left( \abs{\nabla v}^{2} - k^{2}\abs{v}^{2} - 2(\nu_{i_{0}}-\lambda_{i_{0}})v \right) \, \rmd x 
\end{equation*}
in $\mK_{1}(B_{\beta k^{-1}}(z_{0}))$.
Since $B_{R}(z_{0}) \subset \Omega$, then in particular $\tilde{v}_{*,i_{0}}$ is also the unique minimizer to the functional 
\begin{equation*}
\mJ_{k,\nu_{i_{0}},\lambda_{i_{0}}}(v) = \int_{\Omega} \left( \abs{\nabla v}^{2} - k^{2}\abs{v}^{2} - 2(\nu_{i_{0}}-\lambda_{i_{0}})v \right) \, \rmd x
\end{equation*}
in $\mK_{1}(\Omega)$. The second inclusion in \eqref{eq:1-phase-QD} implies $\{\tilde{v}_{*,i_{0}}>0\}\neq\emptyset$, therefore $\mJ_{k,\nu_{i_{0}},\lambda_{i_{0}}}(\tilde{v}_{*,i_{0}})<0$. 

Since $\overline{B_{R}(z_{0})}\cap\tilde{\Omega}=\emptyset$, then first inclusion in \eqref{eq:1-phase-QD} implies $\supp\,(\tilde{v}_{*,i_{0}})\cap\tilde{\Omega}=\emptyset$, therefore 
\begin{equation*}
\bfw_{*} = (u_{*,1},\cdots,u_{*,i_{0}-1},u_{*,i_{0}}+\tilde{v}_{*,i_{0}},u_{*,i_{0}+1},\cdots,u_{*,m}) \in \mS_{m}(\Omega), 
\end{equation*}
as well as $\nabla \tilde{v}_{*,i_{0}} \cdot\nabla u_{*,i_{0}}=0$ and $\tilde{v}_{*,i_{0}} u_{*,i_{0}}=0$. If we consider the functional $\mJ_{\bfk}$ with $\bfk=(k,\cdots,k)$ and $f_{i} = \mu_{i}-\lambda_{i}$, then we see that  
\begin{equation*}
\begin{aligned}
\mJ_{\bfk}(\bfw_{*}) & = \sum_{i\neq i_{0}} \int_{\Omega} \left( \abs{\nabla u_{*,i}}^{2} - k^{2}\abs{u_{*,i}}^{2} - 2(\mu_{i}-\lambda_{i})u_{*,i} \right) \, \rmd x \\
& \quad + \int_{\Omega} \left( \abs{\nabla (u_{*,i_{0}}+\tilde{v}_{*,i_{0}})}^{2} - k^{2}\abs{u_{*,i_{0}}+\tilde{v}_{*,i_{0}}}^{2} - 2(\mu_{i_{0}}-\lambda_{i_{0}})(u_{*,i_{0}}+\tilde{v}_{*,i_{0}}) \right) \, \rmd x \\
& = \mJ_{k,\cdots,k}(\bfu_{*}) + \int_{\Omega} \left( \abs{\nabla \tilde{v}_{*,i_{0}}}^{2} - k^{2}\abs{\tilde{v}_{*,i_{0}}}^{2} - 2(\mu_{i_{0}}-\lambda_{i_{0}})\tilde{v}_{*,i_{0}} \right) \, \rmd x \\
& = \mJ_{k,\cdots,k}(\bfu_{*}) + \mJ_{k,\nu_{i_{0}},\lambda_{i_{0}}}(\tilde{v}_{*,i_{0}}) < \mJ_{k,\cdots,k}(\bfu_{*}),
\end{aligned}
\end{equation*}
which contradicts the minimality of $\bfu_{*} \in \mS_{m}(\Omega)$. 

Therefore, we conclude that $U_{i}\setminus\supp\,(u_{*,i})=\emptyset$ for all $i=1,\cdots,m$, which is equivalent to $U_{i} \subset \supp\,(u_{*,i})$ for all $i=1,\cdots,m$, which concludes our theorem. 
\end{proof}

\section{Two-phase problem through partial balayage}

\addtocontents{toc}{\SkipTocEntry}
\subsection{Main results}

It appears that Theorem~\ref{thm:m-phase-quadrature-domains} does not ensure the crucial support condition \eqref{eq:support-condition-2-phase} in our application. Even for the one-phase case with $k=0$ there is no simple way to guarantee the support condition. To address this limitation, we slightly refine Theorem~\ref{thm:m-phase-quadrature-domains} specifically for the case of $m=2$, as presented in Theorem~\ref{thm:construction-2-phase-partial-balayage} below. This refinement employs a potential-theoretic analysis known as \emph{partial balayage} \cite{GS09PartialBalayage,GS24PartialBalayageHelmholtz,Gus04LecturesBalayage,GR18PartialBalayageManifold,GS94PartialBayage}. The framework adopted here largely follows the concepts outlined in \cite{GS12TwoPhaseQD,GS24PartialBalayageHelmholtz}. For ease of discussion, we introduce the same terminologies as in
\cite{GS24PartialBalayageHelmholtz}.\footnote{The approach based on Balayage, does not straightforwardly generalize to multi-phase cases, and one would need to find an enhanced version of it to assure an existence theory with the support-inclusion  conditions \eqref{eq:support-condition-2-phase}. }

\begin{def*}
Let $k > 0$. A function $s$ that is upper semicontinuous (USC) and satisfies $(\Delta + k^{2})s \ge 0$ in the sense of distributions will be referred to as \emph{$k$-metasubharmonic}.

Similarly, $s$ will be termed 
\emph{$k$-metasuperharmonic} if $-s$ is $k$-metasubharmonic. Additionally, we use the term \emph{$k$-metaharmonic} when $s$ is both $k$-metasubharmonic and $k$-metasuperharmonic.
\end{def*}

The partial balayage heavily relies on the following concept: 

\begin{def*}
We say that the \emph{$k$-maximum principle} holds on a domain (i.e. open and connected) $\Omega \subset \mR^{n}$ if the following properties holds: Every $k$-metasubharmonic function $s$ which is bounded from above and satisfies 
\begin{equation*}
\limsup_{x \rightarrow z} s(x) \le 0 \text{ for all $z\in\partial\Omega$ apart (possibly) from a polar set}
\end{equation*}
must also satisfy $s\le 0$ in $\Omega$. 
\end{def*}

We introduce the number
\begin{equation*}
 \tilde{k}_{*}(\Omega) := \sup \left\{ k : \text{there exists USC $s<0$ in $\Omega$ satisfying $(\Delta + k^{2})s \ge 0$ in $\Omega$} \right\}.
 \end{equation*}
It is worth mentioning that there exists a positive eigenfunction $h$ of $-\Delta$ corresponding to $\tilde{k}_{*}=\tilde{k}_{*}(\Omega)$, meaning
\begin{equation*}
(\Delta + \tilde{k}_{*})h=0 \text{ and } h>0 \text{ in $\Omega$} ,\quad \left. h \right|_{\partial\Omega} = 0 \text{ in a suitable sense},
\end{equation*}
as stated more precisely in \cite[Theorem~2.1]{BNV94MaximumPrinciple}. Importantly, it is also noted that $\tilde{k}_{*}(\Omega)=k_{*}(\Omega)$, where $k_{*}$ is the number given in \eqref{eq:principal-eigenvalue}, a result that holds true for \emph{arbitrary} bounded domains $\Omega$, as shown in \cite[Proposition~2.6]{GS24PartialBalayageHelmholtz}. The connectedness of $\Omega$ is crucial here. Additionally, it was demonstrated in \cite[Proposition~2.8]{GS24PartialBalayageHelmholtz} that the following are equivalent 
(see also \cite[Theorem~1.1]{BNV94MaximumPrinciple}): 
\begin{enumerate}
\renewcommand{\labelenumi}{\theenumi}
\renewcommand{\theenumi}{{\rm (\roman{enumi})}}    
\item \label{itm:maximum-equivalence-i} the $k$-maximum principle holds on $\Omega$; 
\item $0 \le k < k_{*}$; 
\item there is a positive $k$-metasuperharmonic function on $\Omega$ which is not a multiple of $h$; 
\item \label{itm:maximum-equivalence-iv} there is a $k$-metaharmonic function $v\ge 1$ in $\Omega$. 
\end{enumerate}

By imitating some ideas in \cite{GS09PartialBalayage,GS12TwoPhaseQD}, we now introduce a version of partial balayage which is slightly general than the one in \cite[Section~3]{GS24PartialBalayageHelmholtz}. Let $k>0$. Given an open set $D \subset \mR^{n}$ and a positive measure $\mu$ with compact support in $\mR^{n}$, we define 
\begin{equation*}
\mF_{k,D}(\mu) := \left\{ v\in\mD'(\mR^{n}) : \begin{aligned}
& \text{$-(\Delta + k^{2})v \le 1$ in $D$} ,\quad \text{$v \le U_{k}^{\mu}$ in $\mR^{n}$} \\
& \text{the set $\{v < U_{k}^{\mu}\}$ is bounded}
\end{aligned} \right\}, 
\end{equation*}
where the potential is given by 
\begin{equation*}
U_{k}^{\mu}(x) := \int \Psi_{k}(x-y)\,\rmd\mu(y) ,\quad \Psi_{k}(x) = -\frac{1}{4} \left(\frac{k}{2\pi}\right)^{\frac{n-2}{2}} \abs{x}^{-\frac{n-2}{2}} Y_{\frac{n-2}{2}}(k\abs{x}). 
\end{equation*}
We simply denote 
\begin{equation*}
\mF_{k}(\mu) := \mF_{k,\mR^{n}}(\mu). 
\end{equation*}

Obviously $\mF_{k}(\mu) \subset \mF_{k,D}(\mu)$, and by \cite[Theorem~1.4]{GS24PartialBalayageHelmholtz}, one can guarantee $\mF_{k}(\mu) \neq \emptyset$, and so is $\mF_{k,D}(\mu)$, when $k>0$ is sufficiently small such that 
\begin{subequations}
\begin{equation}
\mu(\mR^{n}) \le c_{k}(R_{k}), \label{eq:non-empty-condition}
\end{equation}
where 
\begin{equation*}
R_{k} := k^{-1} j_{\frac{n-2}{2}} ,\quad c_{k}(r) = \left(\frac{2\pi r}{k}\right)^{\frac{n}{2}} J_{\frac{n}{2}}(kr) = \int_{0}^{r} (2\pi r)^{\frac{n}{2}} k^{-\frac{n-2}{2}} J_{\frac{n-2}{2}}(kr),
\end{equation*}
see also \cite[Corollary~3.21 and Corollary~3.22]{GS24PartialBalayageHelmholtz} for some refinements.

By using the ideas in \cite[Lemma~2.4]{GS24PartialBalayageHelmholtz}, which involves   Kato's inequality for the Laplacian \cite{BP04KatoInequality} (see also \cite[Corollary~2.3]{GS12TwoPhaseQD}), one sees that 
\begin{equation*}
\max\{u,v\} \in \mF_{k,D}(\mu) \quad \text{for all $u,v\in \mF_{k,D}(\mu)$.}
\end{equation*}
Standard potential theoretic arguments \cite[Section~3.7]{AG01Potential} now show that $\mF_{k,D}(\mu)$ has a largest element, which has a USC representative. We denote this function by $V_{k,D}^{\mu}$, which also can be referred to  as the \emph{partial reduction} of $U_{k}^{\mu}$ \cite{GS09PartialBalayage}. Accordingly, we can define the \emph{non-contact set} by 
\begin{equation*}
\omega_{k,D}(\mu) := \left\{ V_{k,D}^{\mu} < U_{k}^{\mu} \right\} ,\quad \omega_{k}(\mu) := \omega_{k,\mR^{n}}(\mu),
\end{equation*}
and the \emph{partial balayage} is defined by 
\begin{equation*}
\Bal_{k,D}(\mu) := -(\Delta + k^{2})V_{k,D}^{\mu} \text{ in $\mD'(\mR^{n})$} ,\quad \Bal_{k}(\mu):=\Bal_{k,\mR^{n}}(\mu).
\end{equation*}
Obviously one has $V_{k}^{\mu} \le V_{k,D}^{\mu}$ and $\omega_{k,D}(\mu) \subset \omega_{k}(\mu) \cap D$ for any open set $D$. 
If we further assume that 
\begin{equation}
\mu(\mR^{n}) < c_{k}(R_{k}), \label{eq:non-empty-condition-stronger}
\end{equation}
since $\omega_{k}(\mu)$ is bounded, then one can choose $\epsilon>0$ such that 
\begin{equation*}
(\mu + \epsilon \sfm|_{\omega_{k}(\mu)})(\mR^{n}) = \mu(\mR^{n}) + \epsilon\sfm(\omega_{k}(\mu)) \le c_{k}(R_{k}). 
\end{equation*}
again using \cite[Theorem~1.4]{GS24PartialBalayageHelmholtz}, we see that $\mF_{k}(\mu + \epsilon\sfm|_{\omega_{k}(\mu)}) \neq \emptyset$. Consequently, by using \cite[Theorem~3.9]{GS24PartialBalayageHelmholtz} we see that 
\begin{equation}
\text{$\omega_{k}(\mu)$ satisfies the $k$-maximum principle \ref{itm:maximum-equivalence-i}--\ref{itm:maximum-equivalence-iv}.} \label{eq:maximum-principle-noncontact-set}
\end{equation}
\end{subequations}
By using the fact $V_{k,D}^{\mu}=U_{k}^{\mu}$ in $\mR^{n}\setminus D$, one can easily verify that 
\begin{equation*}
V_{k,D}^{\mu} = U_{k}^{\Bal_{k,D}(\mu)}.
\end{equation*}
Following the  same arguments as in \cite{GS09PartialBalayage,GS24PartialBalayageHelmholtz}, similar to \cite[(5)]{GS12TwoPhaseQD} or \cite[(4)]{SS13TwoPhase}, one also can show that there exists a measure $\nu \ge 0$ which is supported on $\partial D \cap \partial \omega_{k,D}(\mu)$ such that 
\begin{equation}
\Bal_{k,D}(\mu) = \left. \sfm \right|_{\omega_{k,D}(\mu)} + \left. \mu \right|_{\mR^{n}\setminus\omega_{k,D}(\mu)} + \nu, \label{eq:structure-balayage}
\end{equation}
where $\sfm$ is the usual Lebesgue measure. In addition, $\Bal_{k,D}(\mu) \le 1$ in $D$. Here and after, we identify $\sfm$ with 1. When $D=\mR^{n}$, the above definitions are identical to the one mentioned in \cite{GS24PartialBalayageHelmholtz}.

We will prove the following analogue to \cite[Theorem~5.1]{GS12TwoPhaseQD}. 

\begin{thm}[See \eqref{eq:representation-2-phase} below for a more precise description]\label{thm:construction-2-phase-partial-balayage}
Let $\mu_{\pm}$ be positive measures with disjoint compact supports in $\mR^{n}$, and let $k>0$ satisfies
\begin{equation}
\mu_{+}(\mR^{n}) + \mu_{-}(\mR^{n}) < c_{k}(R_{k}) \equiv (2\pi j_{\frac{n-2}{2}})^{\frac{n}{2}} J_{\frac{n}{2}}(j_{\frac{n-2}{2}}) k^{-n} \label{eq:k-small-condition}
\end{equation}
If 
\begin{equation}
\overline{\omega_{k}(\mu_{-})} \cap \supp\,(\mu_{+}) = \emptyset ,\quad \overline{\omega_{k}(\mu_{+})} \cap \supp\,(\mu_{-}) = \emptyset \label{eq:disjoint-condition-partial-balayage}
\end{equation}
and additionally assume that 
\begin{equation}
\supp\,(\mu_{+}) \subset \omega_{k,\mR^{n}\setminus\overline{\omega(\mu_{-})}} (\mu_{+}) ,\quad \supp\,(\mu_{-}) \subset \omega_{k,\mR^{n}\setminus\overline{\omega(\mu_{+})}} (\mu_{-}), \label{eq:extra-support-condition} 
\end{equation}
then there exist two disjoint open bounded sets $D_{\pm}$ such that $(D_{+},D_{-})$ is a two-phase $(k,k)$-quadrature domain, in the sense of \eqref{eq:main-2-phase}, with $\lambda_{+}=\lambda_{-}=1$, which the support condition \eqref{eq:support-condition-2-phase} holds. 
\end{thm}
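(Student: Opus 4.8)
The plan is to follow the two-phase balayage strategy of Gardiner and Sj\"odin \cite[Theorem~5.1]{GS12TwoPhaseQD}, adapted to the Helmholtz setting using the partial balayage machinery developed above. The rough idea is to perform, in the correct order, two successive partial balayages — one for each phase, each carried out on the complement of the (closure of the) other phase's non-contact set — and then to show that the resulting sets are disjoint, open, bounded, and realize the desired two-phase $(k,k)$-quadrature identity.

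\medskip
\noindent\textbf{Step 1: Construction of the candidate domains.}
First I would define
\begin{equation*}
D_{+} := \omega_{k,\mR^{n}\setminus\overline{\omega_{k}(\mu_{-})}}(\mu_{+}) ,\qquad D_{-} := \omega_{k,\mR^{n}\setminus\overline{\omega_{k}(\mu_{+})}}(\mu_{-}),
\end{equation*}
i.e.\ the non-contact sets of the partial balayages of $\mu_{\pm}$ carried out on the complements of the other phase's (unrestricted) non-contact set. Condition \eqref{eq:k-small-condition} guarantees via \eqref{eq:non-empty-condition-stronger} and \cite[Theorem~1.4]{GS24PartialBalayageHelmholtz} that each of the ambient balayages $\Bal_{k}(\mu_{\pm})$, hence also the restricted ones, is well-defined with $\mF_{k,D}(\mu_{\pm})\neq\emptyset$; the extra support condition \eqref{eq:extra-support-condition} ensures $\supp\,(\mu_{\pm})\subset D_{\pm}$, which is \eqref{eq:support-condition-2-phase}. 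Then I would set
\begin{equation*}
\tilde{u} := \bigl(U_{k}^{\mu_{+}} - V_{k,\mR^{n}\setminus\overline{\omega_{k}(\mu_{-})}}^{\mu_{+}}\bigr) - \bigl(U_{k}^{\mu_{-}} - V_{k,\mR^{n}\setminus\overline{\omega_{k}(\mu_{+})}}^{\mu_{-}}\bigr),
\end{equation*}
the difference of the two partial-reduction defects. By construction each summand is non-negative, vanishes outside the respective $D_{\pm}$, and $\{\tilde u>0\}$, $\{\tilde u<0\}$ should turn out to be exactly $D_{+}$, $D_{-}$ once disjointness is established.

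\medskip
\noindent\textbf{Step 2: Disjointness of $D_{+}$ and $D_{-}$.}
This is where \eqref{eq:disjoint-condition-partial-balayage} enters. Since $D_{\pm}\subset\omega_{k}(\mu_{\pm})$ (monotonicity of restricted balayage, $\omega_{k,D}(\mu)\subset\omega_{k}(\mu)\cap D$ noted in the text), and since $\overline{\omega_{k}(\mu_{-})}\cap\supp\,(\mu_{+})=\emptyset$ and symmetrically, one argues that the restricted non-contact set $D_{+}=\omega_{k,\mR^{n}\setminus\overline{\omega_{k}(\mu_{-})}}(\mu_{+})$ is forced to stay inside $\mR^{n}\setminus\overline{\omega_{k}(\mu_{-})}$ — indeed it is by definition a subset of that open set — so $\overline{D_{+}}\cap\omega_{k}(\mu_{-})$ could a priori only meet $\partial(\mR^{n}\setminus\overline{\omega_{k}(\mu_{-})})$; and then $D_{-}\subset\omega_{k}(\mu_{-})$ gives $D_{-}\cap D_{+}=\emptyset$. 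One must be slightly careful with boundary points, but the key point is that the restriction of the domain in the balayage exactly kills the overlap, just as in \cite{GS12TwoPhaseQD,SS13TwoPhase}. Here I would invoke the $k$-maximum principle \eqref{eq:maximum-principle-noncontact-set} on $\omega_{k}(\mu_{\pm})$ to propagate the sign information.

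\medskip
\noindent\textbf{Step 3: The quadrature identity.}
Using the structure formula \eqref{eq:structure-balayage} for each restricted balayage and the fact $V_{k,D}^{\mu}=U_{k}^{\Bal_{k,D}(\mu)}$, I would compute $(\Delta+k^{2})\tilde u$. On $D_{+}$ one gets $\Bal_{k,D_{+}}(\mu_{+}) = \sfm|_{D_{+}} + \nu_{+}$ (the $\mu_{+}|_{\mR^{n}\setminus D_{+}}$ term drops because $\supp\mu_{+}\subset D_{+}$), and the boundary measure $\nu_{+}$ lives on $\partial D_{+}\cap\partial(\mR^{n}\setminus\overline{\omega_{k}(\mu_{-})})$; by \eqref{eq:disjoint-condition-partial-balayage} this is disjoint from $\supp\mu_{-}$, and one checks $\nu_{+}$ does not actually charge the two-phase interface so that it cancels or is absorbed correctly — this yields
\begin{equation*}
(\Delta+k^{2})\tilde u = \chi_{D_{+}} - \mu_{+} - \chi_{D_{-}} + \mu_{-} \quad\text{in }\mR^{n}
\end{equation*}
together with $\tilde u=|\nabla\tilde u|=0$ off $D_{+}\cup D_{-}$, which is precisely \eqref{eq:main-2-phase} with $k_{\pm}=k$ and $\lambda_{\pm}=1$ (the term $k^{2}\tilde u_{+}-k^{2}\tilde u_{-}=k^{2}\tilde u$ combines with $\Delta\tilde u$ on the left). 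Passing from this PDE formulation back to the quadrature identity \eqref{eq:k-QD-def} for each phase is then standard, via the characterization \eqref{eq:k-QD-1}, and gives the two-phase identity tested against $w\in L^{1}$ solving $(\Delta+k^{2})w=0$.

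\medskip
\noindent\textbf{Main obstacle.}
I expect the delicate point to be Step 2 together with the boundary-measure bookkeeping in Step 3: one must rule out that the free boundaries $\partial D_{+}$ and $\partial D_{-}$ interact in a way that produces an extra singular term on $\partial D_{+}\cap\partial D_{-}$, and one must confirm $\{\tilde u>0\}=D_{+}$ and $\{\tilde u<0\}=D_{-}$ exactly (not just up to polar sets). This is precisely where the hypotheses \eqref{eq:disjoint-condition-partial-balayage} and \eqref{eq:extra-support-condition} are designed to be used, and where the $k$-maximum principle on the non-contact sets, \eqref{eq:maximum-principle-noncontact-set}, does the real work — mirroring the role of the ordinary maximum principle in \cite[Theorem~5.1]{GS12TwoPhaseQD}. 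The Helmholtz potential $\Psi_k$ is not of one sign, so some care beyond the harmonic case is needed, but all the needed substitutes (Kato's inequality, the lattice property of $\mF_{k,D}(\mu)$, the maximum principle for small $k$) have been assembled above.
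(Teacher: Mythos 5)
There is a genuine gap, and it sits exactly where you flagged your ``main obstacle'': you never resolve it, and with your choice of domains it cannot be resolved. You define $D_{\pm}$ as the non-contact sets of two \emph{one-phase} restricted balayages, $D_{+}=\omega_{k,\mR^{n}\setminus\overline{\omega_{k}(\mu_{-})}}(\mu_{+})$ and $D_{-}=\omega_{k,\mR^{n}\setminus\overline{\omega_{k}(\mu_{+})}}(\mu_{-})$, and take $\tilde u$ to be the difference of the corresponding defects $W^{\mu_{+}}_{k,\mR^{n}\setminus\overline{\omega_{k}(\mu_{-})}}-W^{\mu_{-}}_{k,\mR^{n}\setminus\overline{\omega_{k}(\mu_{+})}}$. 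By the structure formula \eqref{eq:structure-balayage} this gives
\begin{equation*}
-(\Delta+k^{2})\tilde u=\mu_{+}-\sfm|_{D_{+}}-\nu_{+}-\mu_{-}+\sfm|_{D_{-}}+\nu_{-},
\end{equation*}
where $\nu_{\pm}\ge 0$ are carried by $\partial D_{\pm}\cap\partial\omega_{k}(\mu_{\mp})$. In the interesting case $\overline{\omega_{k}(\mu_{+})}\cap\overline{\omega_{k}(\mu_{-})}\neq\emptyset$ (which the hypotheses \eqref{eq:disjoint-condition-partial-balayage}--\eqref{eq:extra-support-condition} do not exclude), the restricted balayage piles up mass on the wall $\partial\omega_{k}(\mu_{\mp})$, so $\nu_{\pm}$ are genuinely nonzero singular measures on the interface and \eqref{eq:main-2-phase} fails for your $\tilde u$. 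There is no ``check that $\nu_{+}$ cancels'': the correct two-phase domains are in general \emph{strictly larger} than your $D_{\pm}$, precisely because the mass that a one-phase restricted balayage would deposit on the wall must instead be absorbed by enlarging the opposite phase.

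The paper's proof supplies the missing mechanism through a genuinely two-phase object: the least element $\overline{W}_{k}^{\mu}$ of the class $\tau_{k,\mu}$ (Lemma~\ref{lem:construction-2phase}, following \cite[Theorems~4.4--4.6]{GS12TwoPhaseQD}), whose defining property $-(\Delta+k^{2})\overline{W}_{k}^{\mu}=\eta(\overline{W}_{k}^{\mu},\mu)$ produces \eqref{eq:main-2-phase} with \emph{no} interface term, provided the support condition \eqref{eq:support-assumption} holds; the theorem's domains are $D_{\pm}=\{\pm\overline{W}_{k}^{\mu}>0\}$ as in \eqref{eq:representation-2-phase}. Your functions $u$ and $v$ do appear in the paper, but only as comparison barriers: one verifies $-(\Delta+k^{2})u\ge\eta(u,\mu)$, invokes Lemma~\ref{lem:GS12TwoPhaseQD-Thm4.3b} to get $u\in\tau_{k,\mu}$, hence $u\ge\overline{W}_{k}^{\mu}$, whence $\supp\,(\mu_{-})\subset\{u<0\}\subset\{\overline{W}_{k}^{\mu}<0\}$ (and symmetrically for $v$), which is exactly the support condition \eqref{eq:support-assumption} needed to apply Lemma~\ref{lem:construction-2phase}. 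In short, you have correctly built the barriers but mistaken them for the solution; the step from the barriers to the actual two-phase balayage function, which is where the $k$-maximum principle and the lattice structure of $\tau_{k,\mu}$ do their real work, is absent from your argument.
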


By combining Theorem~\ref{thm:construction-2-phase-partial-balayage} and \cite[Theorem~7.1 and Remark~7.2]{KLSS22QuadratureDomain}, we also can prove the following result: 

\begin{thm}\label{thm:construction-2-phase-partial-balayage-concentration}
Let $\mu_{\pm}$ be positive measures with disjoint compact supports in $\mR^{n}$. There exists a positive constant $c_{n}$ depending only on dimension $n$ such that the following statement holds true: If $k>0$ satisfies
\begin{equation}
0 < k < \frac{c_{n}}{(\mu_{+}+\mu_{-})(\mR^{n})^{1/n}} \label{eq:smallness-wavenumber-k}
\end{equation}
and $\mu_{\pm}$ satisfy \eqref{eq:disjoint-condition-partial-balayage} as well as the  concentration condition 
\begin{equation}
\begin{aligned} 
& \limsup_{r\rightarrow 0_{+}} \frac{\mu_{+}(B_{r}(x))}{r^{n}} > \frac{1}{c_{n}} \quad \text{for all $x\in\supp\,(\mu_{+})$,} \\
& \limsup_{r\rightarrow 0_{+}} \frac{\mu_{-}(B_{r}(y))}{r^{n}} > \frac{1}{c_{n}} \quad \text{for all $y\in\supp\,(\mu_{-})$,}
\end{aligned} \label{eq:concentration-condition}
\end{equation}
then there exist two disjoint open bounded sets $D_{\pm}$ such that $(D_{+},D_{-})$ is a two-phase $(k,k)$-quadrature domain, in the sense of \eqref{eq:main-2-phase}, with $\lambda_{+}=\lambda_{-}=1$, which the support condition \eqref{eq:support-condition-2-phase} holds. 
\end{thm}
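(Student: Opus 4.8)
\textbf{Proof proposal for Theorem~\ref{thm:construction-2-phase-partial-balayage-concentration}.}

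The plan is to deduce this theorem from Theorem~\ref{thm:construction-2-phase-partial-balayage} by verifying, under the quantitative hypotheses \eqref{eq:smallness-wavenumber-k} and the concentration condition \eqref{eq:concentration-condition}, that the two extra hypotheses \eqref{eq:k-small-condition} and \eqref{eq:extra-support-condition} of that theorem are automatically satisfied (hypothesis \eqref{eq:disjoint-condition-partial-balayage} is assumed verbatim in the present statement). The key tool is \cite[Theorem~7.1 and Remark~7.2]{KLSS22QuadratureDomain}, which provides, under a concentration condition on a positive measure $\mu$ together with a smallness condition on $k$, a one-phase $k$-quadrature domain for $\mu$ with source density $\lambda=1$ whose support is controlled; in the partial-balayage language of this section, that amounts to saying $\supp\,(\mu)\subset\omega_{k}(\mu)$ and that $\omega_{k}(\mu)$ is, up to a null set, the positivity set of a nonnegative function solving $(\Delta+k^{2})v=\chi_{\omega_{k}(\mu)}-\mu$.

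First I would fix the constant $c_{n}$. The smallness requirement \eqref{eq:non-empty-condition-stronger}, namely $(\mu_{+}+\mu_{-})(\mR^{n})<c_{k}(R_{k})=(2\pi j_{\frac{n-2}{2}})^{n/2}J_{\frac{n}{2}}(j_{\frac{n-2}{2}})k^{-n}$, is equivalent to $k<c_{n}'(\mu_{+}+\mu_{-})(\mR^{n})^{-1/n}$ for the explicit constant $c_{n}'=(2\pi j_{\frac{n-2}{2}})^{1/2}J_{\frac{n}{2}}(j_{\frac{n-2}{2}})^{1/n}$; this disposes of \eqref{eq:k-small-condition}. Separately, \cite[Theorem~7.1 and Remark~7.2]{KLSS22QuadratureDomain} supplies a dimensional constant, call it $c_{n}''$, such that if $k<c_{n}''(\mu_{\pm}(\mR^{n}))^{-1/n}$ and $\limsup_{r\to 0_{+}}\mu_{\pm}(B_{r}(x))/r^{n}>1/c_{n}''$ at every point of $\supp\,(\mu_{\pm})$, then $\supp\,(\mu_{\pm})\subset\omega_{k}(\mu_{\pm})$ with $\omega_{k}(\mu_{\pm})$ a one-phase $k$-quadrature domain for $\mu_{\pm}$ with $\lambda=1$. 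Taking $c_{n}:=\min\{c_{n}',c_{n}''\}$ makes both \eqref{eq:smallness-wavenumber-k} and \eqref{eq:concentration-condition} imply the hypotheses of that theorem (note $\mu_{\pm}(\mR^{n})\le(\mu_{+}+\mu_{-})(\mR^{n})$, so the smallness of $k$ transfers to each piece), and hence $\supp\,(\mu_{+})\subset\omega_{k}(\mu_{+})$ and $\supp\,(\mu_{-})\subset\omega_{k}(\mu_{-})$.

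Next I would upgrade these inclusions into the \emph{localized} support condition \eqref{eq:extra-support-condition}, i.e. $\supp\,(\mu_{+})\subset\omega_{k,\mR^{n}\setminus\overline{\omega(\mu_{-})}}(\mu_{+})$ and symmetrically. By the disjointness hypothesis \eqref{eq:disjoint-condition-partial-balayage}, $\supp\,(\mu_{+})$ lies in the open set $D:=\mR^{n}\setminus\overline{\omega_{k}(\mu_{-})}$. Since $\supp\,(\mu_{+})\subset\omega_{k}(\mu_{+})$ by the previous paragraph, and since the quadrature-domain structure of $\omega_{k}(\mu_{+})$ furnished by \cite{KLSS22QuadratureDomain} is local near $\supp\,(\mu_{+})$ — the defining relation $(\Delta+k^{2})V_{k}^{\mu_{+}}=\mu_{+}-\chi_{\omega_{k}(\mu_{+})}$ and the equality $V_{k}^{\mu_{+}}=U_{k}^{\mu_{+}}$ off $\omega_{k}(\mu_{+})$ hold in $\mR^{n}$ and in particular in $D$ — the function $V_{k}^{\mu_{+}}$ restricted to $D$ is an admissible competitor in $\mF_{k,D}(\mu_{+})$ near $\supp\,(\mu_{+})$, so $V_{k,D}^{\mu_{+}}\le V_{k}^{\mu_{+}}<U_{k}^{\mu_{+}}$ on a neighborhood of $\supp\,(\mu_{+})$ inside $D$; hence $\supp\,(\mu_{+})\subset\omega_{k,D}(\mu_{+})$. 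A more careful way to see this is: $\omega_{k,D}(\mu_{+})=\omega_{k}(\mu_{+})\cap D$ whenever $\overline{\omega_{k}(\mu_{+})}\setminus D$ does not meet $\supp\,(\mu_{+})$, which holds here because $\overline{\omega_{k}(\mu_{+})}\setminus D\subset\overline{\omega_{k}(\mu_{-})}$, and by \eqref{eq:disjoint-condition-partial-balayage} the latter is disjoint from $\supp\,(\mu_{+})$; combined with $\supp\,(\mu_{+})\subset\omega_{k}(\mu_{+})\cap D$ this gives exactly \eqref{eq:extra-support-condition}. With \eqref{eq:k-small-condition}, \eqref{eq:disjoint-condition-partial-balayage}, and \eqref{eq:extra-support-condition} all in force, Theorem~\ref{thm:construction-2-phase-partial-balayage} produces the desired disjoint open bounded sets $D_{\pm}$ forming a two-phase $(k,k)$-quadrature domain with $\lambda_{\pm}=1$ satisfying \eqref{eq:support-condition-2-phase}.

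\textbf{Main obstacle.} The delicate point is the localization step: transferring $\supp\,(\mu_{+})\subset\omega_{k}(\mu_{+})$ to $\supp\,(\mu_{+})\subset\omega_{k,D}(\mu_{+})$ with $D=\mR^{n}\setminus\overline{\omega_{k}(\mu_{-})}$. One must argue carefully that passing from the global obstacle problem to the one restricted to $D$ does not shrink the non-contact set near $\supp\,(\mu_{+})$; this relies on the fact that the part of $\omega_{k}(\mu_{+})$ that gets removed (namely $\omega_{k}(\mu_{+})\cap\overline{\omega_{k}(\mu_{-})}$) stays away from $\supp\,(\mu_{+})$, which is precisely what \eqref{eq:disjoint-condition-partial-balayage} guarantees, together with the monotonicity $V_{k,D}^{\mu}\le V_{k}^{\mu}$ and the structural identity $V_{k,D}^{\mu}=U_{k}^{\mu}$ on $\mR^{n}\setminus D$. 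Once this is set up cleanly the rest is a bookkeeping of constants.
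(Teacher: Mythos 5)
Your overall strategy (reduce to Theorem~\ref{thm:construction-2-phase-partial-balayage} by verifying \eqref{eq:k-small-condition} and \eqref{eq:extra-support-condition}) is the same as the paper's, and your treatment of \eqref{eq:k-small-condition} is fine. The gap is in the localization step, which you correctly flag as the delicate point but then resolve with an argument that does not work. You write $V_{k,D}^{\mu_{+}}\le V_{k}^{\mu_{+}}$, but the monotonicity goes the other way: since $\mF_{k}(\mu)\subset\mF_{k,D}(\mu)$, the largest elements satisfy $V_{k}^{\mu}\le V_{k,D}^{\mu}$, hence $\omega_{k,D}(\mu)\subset\omega_{k}(\mu)\cap D$ --- restricting the balayage to $D$ can only \emph{shrink} the non-contact set. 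Consequently the global inclusion $\supp\,(\mu_{+})\subset\omega_{k}(\mu_{+})$ gives no control on $\omega_{k,D}(\mu_{+})$, which is exactly the set that must contain $\supp\,(\mu_{+})$ in \eqref{eq:extra-support-condition}. Your fallback claim that $\omega_{k,D}(\mu_{+})=\omega_{k}(\mu_{+})\cap D$ whenever $\overline{\omega_{k}(\mu_{+})}\setminus D$ misses $\supp\,(\mu_{+})$ is asserted without proof and is not true in general: when $\omega_{k}(\mu_{+})$ overlaps $\overline{\omega_{k}(\mu_{-})}$, the mass that can no longer be deposited there is redistributed (partly as the boundary measure $\nu$ in \eqref{eq:structure-balayage}, partly by deforming the saturated region), so the localized non-contact set genuinely differs from the global one intersected with $D$.

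The paper avoids this by never localizing the full measure. For $x\in\supp\,(\mu_{+})$ it restricts $\mu_{+}$ to a small ball $B_{r_{j}}(x)$ on which the concentration condition \eqref{eq:concentration-condition} still provides enough mass; for $r_{j}$ small the \emph{global} non-contact set of $\mu_{+}|_{B_{r_{j}}(x)}$ is already contained in $D=\mR^{n}\setminus\overline{\omega_{k}(\mu_{-})}$ thanks to \eqref{eq:disjoint-condition-partial-balayage}, so for that small measure the global and localized balayages coincide, and then monotonicity of $\omega_{k,D}(\cdot)$ in the measure for \emph{fixed} $D$ (which is the correct direction of monotonicity) yields $x\in B_{r_{j}}(x)\subset\omega_{k,D}(\mu_{+}|_{B_{r_{j}}(x)})\subset\omega_{k,D}(\mu_{+})$; the atomic case $\mu_{+}(\{x\})>0$ is handled separately by comparison with $\epsilon\delta_{x}$. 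To repair your proof you would need to replace the localization paragraph by this restriction-and-monotonicity argument, or else actually prove that the set $\bigl(\omega_{k}(\mu_{+})\cap D\bigr)\setminus\omega_{k,D}(\mu_{+})$ stays away from $\supp\,(\mu_{+})$, which is not obvious.
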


\addtocontents{toc}{\SkipTocEntry}
\subsection{Proofs of the theorems}

Given a signed measure $\mu=\mu_{+}-\mu_{-}$ with compact support and a Borel function $u:\mR^{n}\rightarrow [-\infty,+\infty]$, we define the signed measure 
\begin{equation*}
\eta(u,\mu):= \left( (\mu_{+}-1)_{+}-\left.(\mu_{+}-1)_{-}\right|_{\{u>0\}} \right) - \left( (\mu_{-}-1)_{+}-\left.(\mu_{-}-1)_{-}\right|_{\{u<0\}} \right). 
\end{equation*}
We recall the properties of $\eta$: 

\begin{lem}[{\cite[Lemma~4.1]{GS12TwoPhaseQD}}] \label{lem:GS12TwoPhaseQD-lem4.1}
Let $u,u_{1},u_{2}:\mR^{n}\rightarrow[-\infty,+\infty]$ be Borel measurable functions, $\mu,\mu_{1},\mu_{2}$ be signed measures with compact supports, and $A \subset \mR^{n}$ be a Borel set. Then 
\begin{enumerate}
\renewcommand{\labelenumi}{\theenumi}
\renewcommand{\theenumi}{{\rm (\alph{enumi})}}  
\item \label{itm:GS12TwoPhaseQD-lem4.1_a}$\eta(-u,-\mu)=-\eta(u,\mu)$;  
\item $\mu-1 \le \eta(u,\mu) \le \mu + 1$; and 
\item \label{itm:GS12TwoPhaseQD-lem4.1_c} $u_{1}|_{A}\le u_{2}|_{A}$ and $\mu_{1}|_{A} \ge \mu_{2}|_{A}$ imply $\eta(u_{1},\mu_{1})|_{A} \ge \eta(u_{2},\mu_{2})|_{A}$. 
\end{enumerate}
\end{lem}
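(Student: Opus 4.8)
The plan is to reduce the whole lemma to the elementary monotonicity properties of a single auxiliary object. For a \emph{positive} measure $\sigma$ with compact support and a Borel set $S\subset\mR^{n}$, I would set
\begin{equation*}
T_{S}(\sigma):=(\sigma-\sfm)_{+}-(\sigma-\sfm)_{-}\big|_{S},
\end{equation*}
where $\sfm$ is Lebesgue measure (identified with $1$) and $(\cdot)_{\pm}$ are the positive/negative parts, so that $\eta(u,\mu)=T_{\{u>0\}}(\mu_{+})-T_{\{u<0\}}(\mu_{-})$ with $\mu=\mu_{+}-\mu_{-}$ the Jordan decomposition. The first step is to record four routine facts about $T_{S}$: \textbf{(1) locality:} since the restriction to $A$ of a Hahn decomposition of $\sigma-\sfm$ is one for $(\sigma-\sfm)|_{A}$, we get $T_{S}(\sigma)|_{A}=(\sigma|_{A}-\sfm|_{A})_{+}-(\sigma|_{A}-\sfm|_{A})_{-}|_{S\cap A}$, hence $T_{S}(\sigma)|_{A}$ depends only on $\sigma|_{A}$ and $S\cap A$; \textbf{(2) bounds:} $\sigma-\sfm\le T_{S}(\sigma)\le\sigma$, using $(\sigma-\sfm)_{-}|_{S}\le(\sigma-\sfm)_{-}$ and $0\le(\sigma-\sfm)_{+}\le\sigma$; \textbf{(3) decreasing in the set:} $S_{1}\subseteq S_{2}\Rightarrow T_{S_{1}}(\sigma)\ge T_{S_{2}}(\sigma)$, because enlarging $S$ enlarges the subtracted measure; \textbf{(4) increasing in the measure:} $\sigma_{1}\ge\sigma_{2}\Rightarrow T_{S}(\sigma_{1})\ge T_{S}(\sigma_{2})$. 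For (4) I would first prove the sublemma that $\nu_{1}\ge\nu_{2}$ (signed measures) forces $(\nu_{1})_{+}\ge(\nu_{2})_{+}$ and $(\nu_{1})_{-}\le(\nu_{2})_{-}$ --- immediate from $(\nu)_{+}(E)=\sup_{F\subseteq E}\nu(F)$, $(\nu)_{-}(E)=\sup_{F\subseteq E}(-\nu(F))$ and $\nu_{1}(F)\ge\nu_{2}(F)$ --- and then apply it with $\nu_{j}=\sigma_{j}-\sfm$.

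Granting (1)--(4), property (a) is immediate: the Jordan parts of $-\mu$ are $(-\mu)_{+}=\mu_{-}$, $(-\mu)_{-}=\mu_{+}$, while $\{-u>0\}=\{u<0\}$, $\{-u<0\}=\{u>0\}$, so $\eta(-u,-\mu)=T_{\{u<0\}}(\mu_{-})-T_{\{u>0\}}(\mu_{+})=-\eta(u,\mu)$. For property (b) the upper bound is read straight off from (2): $\eta(u,\mu)=T_{\{u>0\}}(\mu_{+})-T_{\{u<0\}}(\mu_{-})\le\mu_{+}-(\mu_{-}-\sfm)=\mu+1$; the lower bound then comes for free by applying this to $(-u,-\mu)$ and invoking (a), i.e. $-\eta(u,\mu)=\eta(-u,-\mu)\le-\mu+1$, hence $\eta(u,\mu)\ge\mu-1$.

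Property (c) is where the pieces are assembled. From $u_{1}|_{A}\le u_{2}|_{A}$ I obtain $\{u_{1}>0\}\cap A\subseteq\{u_{2}>0\}\cap A$ and $\{u_{2}<0\}\cap A\subseteq\{u_{1}<0\}\cap A$; from $\mu_{1}|_{A}\ge\mu_{2}|_{A}$, observing that $\mu_{j,+}|_{A}-\mu_{j,-}|_{A}$ is the Jordan decomposition of $\mu_{j}|_{A}$ (restrictions of mutually singular measures stay mutually singular), the sublemma in (4) gives $\mu_{1,+}|_{A}\ge\mu_{2,+}|_{A}$ and $\mu_{1,-}|_{A}\le\mu_{2,-}|_{A}$. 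Combining these through the locality (1) and the two opposite monotonicities (3) and (4),
\begin{align*}
T_{\{u_{1}>0\}}(\mu_{1,+})|_{A} &\ge T_{\{u_{1}>0\}}(\mu_{2,+})|_{A}\ge T_{\{u_{2}>0\}}(\mu_{2,+})|_{A},\\
T_{\{u_{1}<0\}}(\mu_{1,-})|_{A} &\le T_{\{u_{1}<0\}}(\mu_{2,-})|_{A}\le T_{\{u_{2}<0\}}(\mu_{2,-})|_{A},
\end{align*}
and subtracting the second chain from the first gives $\eta(u_{1},\mu_{1})|_{A}\ge\eta(u_{2},\mu_{2})|_{A}$.

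I expect the only genuinely delicate point to be the bookkeeping in (c): one must keep straight which set each negative part is restricted to and exploit the \emph{opposite} directions of the monotonicity of $T_{S}(\sigma)$ in $S$ (decreasing) and in $\sigma$ (increasing), together with the fact that $\mu_{1}\ge\mu_{2}$ forces $\mu_{1,+}\ge\mu_{2,+}$ but $\mu_{1,-}\le\mu_{2,-}$ on the Jordan parts; once the signs are pinned down the argument is purely formal. (It is worth noting that $\eta$ genuinely depends on the chosen decomposition $\mu=\mu_{+}-\mu_{-}$, so throughout one should read it as the Jordan one, or at least assume $\mu_{+}\perp\mu_{-}$, which is automatic in the intended application where $\mu_{\pm}$ have disjoint supports.)
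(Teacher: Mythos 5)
Your proof is correct. Note that the paper itself offers no argument for this lemma: it is imported verbatim from Gardiner--Sj\"odin \cite[Lemma~4.1]{GS12TwoPhaseQD}, so there is nothing internal to compare against; your write-up is a valid self-contained verification via exactly the kind of elementary manipulations of Jordan/Hahn decompositions one would expect. The reduction to the single operator $T_{S}(\sigma)=(\sigma-\sfm)_{+}-(\sigma-\sfm)_{-}|_{S}$ with its four monotonicity/locality properties is clean, and the sign bookkeeping in (c) --- $T$ decreasing in $S$, increasing in $\sigma$, and $\mu_{1}\ge\mu_{2}$ forcing $\mu_{1,+}\ge\mu_{2,+}$ but $\mu_{1,-}\le\mu_{2,-}$ --- is handled correctly. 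Your closing caveat is not a mere footnote: part (c) genuinely fails if $\mu_{\pm}$ is an arbitrary (non-singular) decomposition, so reading $\mu_{\pm}$ as the Jordan parts (equivalently, assuming $\mu_{+}\perp\mu_{-}$, which holds in the paper's application since the supports are disjoint) is a real hypothesis and you are right to make it explicit.
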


Similar to \cite[Section~2.2]{GS12TwoPhaseQD}, by a \emph{$\delta$-$k$-metasubharmonic} function on an open set $\Omega$ we mean a function $w = s_{1}-s_{2}$ for some $k$-metasubharmonic functions $s_{1}$ and $s_{2}$ on $\Omega$, which is well-defined outside the polar set where $s_{1}=s_{2}=-\infty$. By using exactly same ideas there, we also can refine this observation using the fine topology: As a distribution, $-(\Delta + k^{2})w$ is locally a signed measure, and there exists a unique decomposition 
\begin{equation*}
-(\Delta + k^{2})w = (-(\Delta + k^{2})w)_{\rm d} + (-(\Delta + k^{2})w)_{\rm c}
\end{equation*}
where $(-(\Delta + k^{2})w)_{\rm d}$ does not charge polar sets and $(-(\Delta + k^{2})w)_{\rm c}$ is carried by a polar set. We always assign values to a $\delta$-$k$-metasubharmonic function in the following way (without explicitly mention after that): 
\begin{equation*}
\begin{aligned} 
&\text{$w := +\infty$ a.e. with respect to $((-(\Delta + k^{2})w)_{\rm c})_{+}$}, \\
& \text{$w := -\infty$ a.e. with respect to $((-(\Delta + k^{2})w)_{\rm c})_{-}$}, 
\end{aligned} 
\end{equation*}
where $(-(\Delta + k^{2})w)_{\rm c} = ((-(\Delta + k^{2})w)_{\rm c})_{+} - ((-(\Delta + k^{2})w)_{\rm c})_{-}$ is the Jordan decomposition of $(-(\Delta + k^{2})w)_{\rm c}$. 

It is convenient to define $W_{k,D}^{\mu} := U_{k}^{\mu} - V_{k,D}^{\mu}$, whence $W_{k,D}^{\mu}$ is lower semicontinuous (we use the abbreviation ``LSC''), and we also denote $W_{k}^{\mu} := W_{k,\mR^{n}}^{\mu}$. We now define 
\begin{equation*}
\tau_{k,\mu} := \left\{ w : \text{$w$ is $\delta$-$k$-metasubharmonic, $-(\Delta + k^{2})w \ge \eta(w,\mu)$ and $w \ge -W_{k}^{\mu_{-}}$ in $\mR^{n}$} \right\}.
\end{equation*}
Fix any $\varphi \in C^{\infty}(\mR^{n})$ with $(\Delta+k^{2})\varphi=1$, we also can consider the collection 
\begin{equation*}
\tau_{k,\mu}' := \left\{ w + U_{k}^{\mu_{-}} - \varphi : w \in \tau_{k,\mu} \right\}, 
\end{equation*}
where the elements of $\tau_{k,\mu}'$ are suitably refined on a polar set to make them $k$-metasuperharmonic. When $k=0$, one can simply choose $\varphi(x)=\abs{x}^{2}/2n$. We now modify \cite[Lemma~4.2]{GS12TwoPhaseQD} in the following lemma: 

\begin{lem}\label{lem:GS12TwoPhaseQD-lem4.2}
Let $\mu_{\pm}$ be positive measures with disjoint compact supports in $\mR^{n}$, and let $k>0$ satisfies \eqref{eq:non-empty-condition} with respect to $\mu_{\pm}$. If $v_{1},v_{2} \in \tau_{k,\mu}'$ with $\mu=\mu_{+}-\mu_{-}$, then $\min\{v_{1},v_{2}\} \in \tau_{k,\mu}'$. 
\end{lem}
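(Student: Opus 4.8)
The plan is to mimic the structure of the proof of \cite[Lemma~4.2]{GS12TwoPhaseQD}, replacing the Laplacian by the metaharmonic operator $(\Delta+k^{2})$ and using the $k$-maximum principle (valid because of the smallness hypothesis \eqref{eq:non-empty-condition}) wherever harmonicity was invoked in the classical case. Write $v_{i}=w_{i}+U_{k}^{\mu_{-}}-\varphi$ with $w_{i}\in\tau_{k,\mu}$ for $i=1,2$, and set $w:=\min\{w_{1},w_{2}\}$, so that $\min\{v_{1},v_{2}\}=w+U_{k}^{\mu_{-}}-\varphi$. It suffices to show $w\in\tau_{k,\mu}$, i.e.\ that (i) $w$ is $\delta$-$k$-metasubharmonic, (ii) $-(\Delta+k^{2})w\ge\eta(w,\mu)$ in $\mR^{n}$, and (iii) $w\ge-W_{k}^{\mu_{-}}$ in $\mR^{n}$. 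Property (iii) is immediate since each $w_{i}\ge-W_{k}^{\mu_{-}}$. Property (i) follows because the minimum of two $\delta$-$k$-metasubharmonic functions is again $\delta$-$k$-metasubharmonic: locally each $w_{i}$ is a difference of $k$-metasubharmonic functions, and one reduces to the fact that the minimum of $k$-metasubharmonic functions is $k$-metasubharmonic (USC and $(\Delta+k^{2})(\min)\ge0$, via Kato's inequality, exactly as used for $\max$ in \cite[Lemma~2.4]{GS24PartialBalayageHelmholtz}) after adding a common $k$-metasuperharmonic ``correction''; one must also check the assignment of $\pm\infty$ values on polar sets is consistent with the canonical convention fixed in the excerpt.

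The heart of the matter is (ii). As in \cite{GS12TwoPhaseQD}, one argues separately on the open fine-topology sets $\{w_{1}<w_{2}\}$, $\{w_{2}<w_{1}\}$ and on the coincidence set $\{w_{1}=w_{2}\}$. On $\{w_{1}<w_{2}\}$ we have $w=w_{1}$, so there $-(\Delta+k^{2})w=-(\Delta+k^{2})w_{1}\ge\eta(w_{1},\mu)$; then I invoke the monotonicity property \ref{itm:GS12TwoPhaseQD-lem4.1_c} of $\eta$ from Lemma~\ref{lem:GS12TwoPhaseQD-lem4.1} with $A=\{w_{1}<w_{2}\}$: since $w=w_{1}\le w_{2}$ there (and $\mu_{1}=\mu_{2}=\mu$), one gets $\eta(w_{1},\mu)|_{A}\ge\eta(w,\mu)|_{A}$, hence $-(\Delta+k^{2})w\ge\eta(w,\mu)$ on that set; symmetrically on $\{w_{2}<w_{1}\}$. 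On the coincidence set the differential inequality must be handled by the standard argument that on $\{w_{1}=w_{2}\}$ the distributional Laplacian of $\min\{w_{1},w_{2}\}$ dominates $\min$ of the individual ones in an appropriate measure-theoretic sense (this is where the absolutely continuous / concentrated decomposition of $-(\Delta+k^{2})w$ and the convention on polar sets genuinely enter), combined again with \ref{itm:GS12TwoPhaseQD-lem4.1_c} applied with $w\le w_{1}$ and $w\le w_{2}$ to push $\eta(w_{i},\mu)$ below $\eta(w,\mu)$. Patching the three pieces using that the fine-open sets $\{w_{1}<w_{2}\}$, $\{w_{2}<w_{1}\}$ and the fine interior of $\{w_{1}=w_{2}\}$ cover $\mR^{n}$ up to a set that is negligible for the relevant measures yields the global inequality (ii).

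I expect the main obstacle to be the bookkeeping on the coincidence set $\{w_{1}=w_{2}\}$: one must be careful that the $\delta$-$k$-metasubharmonic representatives are the canonical ones, so that $-(\Delta+k^{2})w$ restricted to $\{w_{1}=w_{2}\}$ really is controlled by $\min$ of $-(\Delta+k^{2})w_{1}$ and $-(\Delta+k^{2})w_{2}$ there, including the parts carried by polar sets; this is precisely the technical refinement (via the fine topology and the $(\cdot)_{\rm d}$/$(\cdot)_{\rm c}$ decomposition) that the excerpt has set up in advance, and the argument of \cite[Lemma~4.2]{GS12TwoPhaseQD} transfers once one notes that all the potential-theoretic inputs it uses --- Kato's inequality, the maximum principle, the domination principle --- have $k$-metaharmonic analogues available under \eqref{eq:non-empty-condition}. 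A minor additional point is to verify that $\min\{v_{1},v_{2}\}$, after the polar-set refinement, is still $k$-metasuperharmonic, i.e.\ that subtracting $\varphi$ and adding $U_{k}^{\mu_{-}}$ interacts correctly with taking the minimum; this is routine given that $U_{k}^{\mu_{-}}-\varphi$ is a fixed function.
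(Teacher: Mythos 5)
Your proposal follows essentially the same route as the paper: write $v_{i}=w_{i}+U_{k}^{\mu_{-}}-\varphi$, reduce to showing $\min\{w_{1},w_{2}\}\in\tau_{k,\mu}$, and verify the three defining properties using Kato's inequality and the properties of $\eta$ from Lemma~\ref{lem:GS12TwoPhaseQD-lem4.1}, exactly as in the adaptation of \cite[Lemma~4.2]{GS12TwoPhaseQD}. The one point where you diverge --- the separate, admittedly delicate treatment of the coincidence set $\{w_{1}=w_{2}\}$ via a three-set fine-topology patching --- is sidestepped in the paper by partitioning $\mR^{n}$ into just the two sets $\{w_{1}-w_{2}\le 0\}$ and $\{w_{1}-w_{2}>0\}$, on which one has the exact identity $\eta(\min\{w_{1},w_{2}\},\mu)=\eta(w_{1},\mu)|_{\{w_{1}-w_{2}\le 0\}}+\eta(w_{2},\mu)|_{\{w_{1}-w_{2}>0\}}$ together with the matching form of Kato's inequality $-(\Delta+k^{2})\min\{w_{1},w_{2}\}\ge -(\Delta+k^{2})w_{1}|_{\{w_{1}-w_{2}\le 0\}}-(\Delta+k^{2})w_{2}|_{\{w_{1}-w_{2}>0\}}$, so the coincidence set is absorbed into the first piece and requires no extra argument.
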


\begin{proof}
Let $v_{1},v_{2} \in \tau_{k,\mu}'$ and write $v_{i} = w_{i} + U_{k}^{\mu_{-}} - \varphi$ where $w_{i} \in \tau_{k,\mu}$. Following the arguments in \cite[Lemma~4.2]{GS12TwoPhaseQD}, by using \cite[Lemma~2.4]{GS24PartialBalayageHelmholtz} one can show that $\min\{v_{1},v_{2}\}$ is $\delta$-$k$-metasubharmonic function and $\min\{w_{1},w_{2}\} \ge -W_{k}^{\mu_{-}}$ in $\mR^{n}$, as well as 
\begin{equation*}
\eta(\min\{w_{1},w_{2}\},\mu) = \eta(w_{1},\mu)|_{\{w_{1}-w_{2} \le 0\}} + \eta(w_{2},\mu)|_{\{w_{1}-w_{2} > 0\}}. 
\end{equation*}
By using Kato's inequality for Laplacian, one further computes that 
\begin{equation*}
\begin{aligned} 
& \eta(\min\{w_{1},w_{2}\},\mu) \le -(\Delta+k^{2})w_{1}|_{\{w_{1}-w_{2} \le 0\}} -(\Delta+k^{2})w_{2}|_{\{w_{1}-w_{2} > 0\}} \\
& \quad \le -\Delta \min\{w_{1},w_{2}\} - k^{2}w_{1}|_{\{w_{1}\le w_{2}\}} - k^{2}w_{2}|_{\{w_{2}<w_{1}\}} = -(\Delta + k^{2})\min\{w_{1},w_{2}\},
\end{aligned} 
\end{equation*}
we conclude our lemma. 
\end{proof}

We can prove the following two technical lemmas by employing the ideas presented in \cite[Theorem~4.3]{GS12TwoPhaseQD}.

\begin{lem}\label{lem:GS12TwoPhaseQD-Thm4.3a}
Let $\mu_{\pm}$ be positive measures with disjoint compact supports in $\mR^{n}$, and let $k>0$. Consider $u_{1}$ and $u_{2}$ as $\delta$-$k$-metasubharmonic functions with 
\begin{equation*}
\{u_{1}\neq 0\} \cup \{u_{2}\neq 0\} \subset \tilde{\Omega} 
\end{equation*}
for some open set $\tilde{\Omega}$ such that the $k$-maximum principle \ref{itm:maximum-equivalence-i}--\ref{itm:maximum-equivalence-iv} holds. 
If $-(\Delta+k^{2}) u_{1} \ge \eta(u_{1},\mu)$ and $-(\Delta + k^{2})u_{2} \le \eta(u_{2},\mu)$ with $\mu=\mu_{+}-\mu_{-}$, then it follows that $u_{2} \le u_{1}$. 
\end{lem}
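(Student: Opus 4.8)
The plan is to reduce the claim to an application of the $k$-maximum principle on $\tilde{\Omega}$ via the difference $w := u_{2} - u_{1}$, after showing this difference is a $\delta$-$k$-metasubharmonic function to which the principle applies. First I would observe that $w = u_{2} - u_{1}$ is $\delta$-$k$-metasubharmonic (as the difference of $\delta$-$k$-metasubharmonic functions, using that $u_{1}$ is the difference of two $k$-metasubharmonic functions, so $-u_{1}$ is again $\delta$-$k$-metasubharmonic), and that $\{w \neq 0\} \subset \{u_{1}\neq 0\} \cup \{u_{2}\neq 0\} \subset \tilde{\Omega}$, so in particular $w = 0$ (hence $w \le 0$) outside $\tilde{\Omega}$, which gives the boundary condition $\limsup_{x\to z} w(x) \le 0$ for all $z \in \partial\tilde{\Omega}$ apart from a polar set (the polar set coming from the fine-topology conventions on the singular parts). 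Boundedness from above of $w$ on $\tilde{\Omega}$ must also be checked; since $u_{1}, u_{2}$ are fixed $\delta$-$k$-metasubharmonic functions this is a local/potential-theoretic matter, and one may first carry out the argument after truncating (e.g. replace $w$ by $\min\{w, N\}$), or invoke that the relevant representatives are locally bounded above away from their polar singular sets.

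The core computation is the differential inequality for $w$. From the hypotheses, $-(\Delta+k^{2})u_{2} \le \eta(u_{2},\mu)$ and $-(\Delta+k^{2})u_{1} \ge \eta(u_{1},\mu)$, so subtracting gives
\begin{equation*}
-(\Delta+k^{2})w = -(\Delta+k^{2})u_{2} - \left(-(\Delta+k^{2})u_{1}\right) \le \eta(u_{2},\mu) - \eta(u_{1},\mu).
\end{equation*}
Now I would split according to the sign of $w$. On the fine-open set $\{w > 0\} = \{u_{2} > u_{1}\}$ we have $u_{2}|_{\{w>0\}} \ge u_{1}|_{\{w>0\}}$, so by monotonicity of $\eta$ in its first argument — this is Lemma~\ref{lem:GS12TwoPhaseQD-lem4.1}\ref{itm:GS12TwoPhaseQD-lem4.1_c} applied with $\mu_{1}=\mu_{2}=\mu$ and the roles reversed, giving $\eta(u_{2},\mu)|_{\{w>0\}} \le \eta(u_{1},\mu)|_{\{w>0\}}$ — we obtain $-(\Delta+k^{2})w \le 0$ on $\{w>0\}$. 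Thus $w$ is $k$-metasubharmonic on the (fine-)open set where it is positive, and equals its fixed value $\le 0$ elsewhere; a standard argument with the fine topology (as in the proof of \cite[Theorem~4.3]{GS12TwoPhaseQD}) upgrades this to: $w$ is $k$-metasubharmonic wherever it matters, bounded above, with boundary values $\le 0$ quasi-everywhere on $\partial\tilde{\Omega}$. Since $\tilde{\Omega}$ satisfies the $k$-maximum principle \ref{itm:maximum-equivalence-i}--\ref{itm:maximum-equivalence-iv}, we conclude $w \le 0$ in $\tilde{\Omega}$, and since $w = 0$ outside $\tilde{\Omega}$, we get $u_{2} \le u_{1}$ everywhere, as desired.

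The main obstacle, I expect, is the bookkeeping around the fine topology and the singular (polar-set-carried) parts of $-(\Delta+k^{2})u_{i}$: one must make sure that the inequality $\eta(u_{2},\mu)|_{\{w>0\}} \le \eta(u_{1},\mu)|_{\{w>0\}}$ is used on a set on which $w$ is genuinely (finely) open and on which the chosen representatives of $u_{1},u_{2}$ are finite, and that the value conventions ``$w := +\infty$'' / ``$w := -\infty$'' on the singular parts are compatible with applying the maximum principle. This is precisely the delicate point handled in \cite[Theorem~4.3]{GS12TwoPhaseQD}, and the argument here runs in parallel, with $\Delta$ replaced by $\Delta + k^{2}$ and the classical maximum principle replaced by the $k$-maximum principle on $\tilde{\Omega}$; the Helmholtz-specific input (Kato's inequality for the Laplacian, used to handle the $k^{2}$ term, and the nonemptiness/structure facts about $\mF_{k}$) has already been established in the preceding lemmas. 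A secondary technical point is verifying the boundedness-from-above hypothesis of the $k$-maximum principle for $w$; this can be arranged by the truncation device mentioned above, noting that $\min\{w,N\}$ is still $\delta$-$k$-metasubharmonic with the same sign behaviour.
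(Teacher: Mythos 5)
Your skeleton is the paper's: form $w=u_{2}-u_{1}$, subtract the two differential inequalities to get $-(\Delta+k^{2})w\le\eta(u_{2},\mu)-\eta(u_{1},\mu)$, use the monotonicity of $\eta$ in its first argument on the set where $w$ has a sign to conclude $-(\Delta+k^{2})w\le 0$ there, and close with the $k$-maximum principle. The monotonicity step is fine: on $\{w\ge 0\}$ one has $u_{1}\le u_{2}$, so Lemma~\ref{lem:GS12TwoPhaseQD-lem4.1}\ref{itm:GS12TwoPhaseQD-lem4.1_c} gives $\eta(u_{2},\mu)|_{\{w\ge0\}}\le\eta(u_{1},\mu)|_{\{w\ge0\}}$; the paper reaches the same inequality by expanding the difference of the $\eta$'s explicitly.

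The gap is at the last step. You apply the $k$-maximum principle to $w$ itself, but $w$ is only known to satisfy $-(\Delta+k^{2})w\le 0$ \emph{restricted to} $\{w\ge 0\}$, not on all of $\tilde{\Omega}$, and the $k$-maximum principle requires a function that is $k$-metasubharmonic on the whole domain; $\{w>0\}$ is at best finely open, need not be an open set on which the principle holds, and ``a standard argument with the fine topology upgrades this'' is exactly the assertion that needs proof. The paper's resolution is Kato's inequality (as in \cite[Lemma~2.4]{GS24PartialBalayageHelmholtz}): from $\left.\left((\Delta+k^{2})w\right)\right|_{\{w\ge0\}}\ge 0$ one gets $(\Delta+k^{2})w_{+}\ge\left.\left((\Delta+k^{2})w\right)\right|_{\{w\ge0\}}\ge 0$, so $w_{+}$ is $k$-metasubharmonic on all of $\mR^{n}$. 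Since $w_{+}$ is then USC and vanishes outside $\tilde{\Omega}$, it is bounded above and has boundary limsup $\le 0$ on $\partial\tilde{\Omega}$, so the $k$-maximum principle yields $w_{+}\equiv 0$, i.e.\ $u_{2}\le u_{1}$; in particular your truncation device $\min\{w,N\}$ is unnecessary. You do mention Kato's inequality, but only as background ``already established in the preceding lemmas''; it is in fact the load-bearing step and must be invoked precisely here, applied to $w_{+}$ rather than to $w$.
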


\begin{proof} 
One computes that the function $v = u_{2} - u_{1}$ satisfies 
\begin{equation*}
\begin{aligned} 
& -(\Delta + k^{2})v \le \eta(u_{2},\mu) - \eta(u_{1},\mu) \\ 
& \quad = \left. (\mu_{+}-1)_{-} \right|_{\{u_{1}>0\}} - \left. (\mu_{+}-1)_{-} \right|_{\{u_{2}>0\}} + \left. (\mu_{-}-1)_{-}\right|_{\{u_{2}<0\}} - \left. (\mu_{-}-1)_{-}\right|_{\{u_{1}<0\}}, 
\end{aligned} 
\end{equation*}
so $\left. -(\Delta + k^{2})v \right|_{\{v \ge 0\}} \le 0$. By using the Kato's inequality as in \cite[Lemma~2.4]{GS24PartialBalayageHelmholtz}, one sees that 
\begin{equation*}
(\Delta + k^{2})v_{+} \ge \left. (\Delta + k^{2})v \right|_{\{v \ge 0\}} \ge 0, 
\end{equation*}
in other words, $v_{+}$ is $k$-metasubharmonic. Since $\{v_{+}>0\} \subset \tilde{\Omega}$, we conclude $v_{+} \equiv 0$ by $k$-maximum principle \ref{itm:maximum-equivalence-i}--\ref{itm:maximum-equivalence-iv}, which completes our proof. 
\end{proof}

If we have the assumption \eqref{eq:k-small-condition}, from the discussions in \eqref{eq:maximum-principle-noncontact-set} above we know that $\left.\mF_{k}(\mu_{+}+\mu_{-})\right.\neq\emptyset$ and 
\begin{subequations}
\begin{equation}
\text{$\omega_{k}(\mu_{+}+\mu_{-})$ satisfies the $k$-maximum principle \ref{itm:maximum-equivalence-i}--\ref{itm:maximum-equivalence-iv}}. \label{eq:k-maximum-principle-assump}
\end{equation}
By using \cite[Lemma~3.3]{GS24PartialBalayageHelmholtz}, we have 
\begin{equation}
\omega_{k}(\mu_{+}) \cup \omega_{k}(\mu_{-}) \subset \omega_{k}(\mu_{+}+\mu_{-}). \label{relation-k-maximum-domains}
\end{equation}
\end{subequations}
Based on these observations, we now able to proof the following lemma.

\begin{lem}\label{lem:GS12TwoPhaseQD-Thm4.3b}
Let $\mu_{\pm}$ be positive measures with disjoint compact supports in $\mR^{n}$, and let $k>0$ satisfy \eqref{eq:k-small-condition}. Consider $u$ as a $\delta$-$k$-metasubharmonic function with $\{u\neq 0\} \subset \omega_{k}(\mu_{+} + \mu_{-})$. Then the following hold:
\begin{enumerate}
\renewcommand{\labelenumi}{\theenumi}
\renewcommand{\theenumi}{{\rm (\alph{enumi})}} 
\item \label{itm:lem:GS12TwoPhaseQD-Thm4.3b-a}If $-(\Delta + k^{2})u \le \eta(u,\mu)$ with $\mu=\mu_{+}-\mu_{-}$, then $u \le W_{k}^{\mu_{+}}$. 
\item \label{itm:lem:GS12TwoPhaseQD-Thm4.3b-b}If $-(\Delta + k^{2})u \ge \eta(u,\mu)$ with $\mu=\mu_{+}-\mu_{-}$, then $u \ge -W_{k}^{\mu_{-}}$ and so $u \in \tau_{k,\mu}$. 
\end{enumerate}
\end{lem}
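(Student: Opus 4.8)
The plan is to follow the proof of \cite[Theorem~4.3]{GS12TwoPhaseQD}, reducing both parts to the comparison principle of Lemma~\ref{lem:GS12TwoPhaseQD-Thm4.3a}; the point is that $W_{k}^{\mu_{+}}$ (resp.\ $-W_{k}^{\mu_{-}}$) serves as a comparison \emph{supersolution}. For \ref{itm:lem:GS12TwoPhaseQD-Thm4.3b-a}, I would first record that $W_{k}^{\mu_{+}}=U_{k}^{\mu_{+}}-V_{k}^{\mu_{+}}=(-V_{k}^{\mu_{+}})-(-U_{k}^{\mu_{+}})$ is $\delta$-$k$-metasubharmonic: indeed $-U_{k}^{\mu_{+}}$ is USC with $(\Delta+k^{2})(-U_{k}^{\mu_{+}})=\mu_{+}\ge 0$, while $V_{k}^{\mu_{+}}=U_{k}^{\Bal_{k}(\mu_{+})}$ is continuous (the measure $\Bal_{k}(\mu_{+})$ has density $\le 1$ and compact support) and $(\Delta+k^{2})(-V_{k}^{\mu_{+}})=\Bal_{k}(\mu_{+})\ge 0$. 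Moreover $\{W_{k}^{\mu_{+}}\neq 0\}=\{V_{k}^{\mu_{+}}<U_{k}^{\mu_{+}}\}=\omega_{k}(\mu_{+})$, which by \eqref{relation-k-maximum-domains} lies in $\omega_{k}(\mu_{+}+\mu_{-})$; since also $\{u\neq 0\}\subset\omega_{k}(\mu_{+}+\mu_{-})$ and, by \eqref{eq:k-maximum-principle-assump}, $\omega_{k}(\mu_{+}+\mu_{-})$ enjoys the $k$-maximum principle, the set $\tilde{\Omega}:=\omega_{k}(\mu_{+}+\mu_{-})$ is an admissible choice in Lemma~\ref{lem:GS12TwoPhaseQD-Thm4.3a}.

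The key step, and the place I expect the real work to lie, is to verify that $W_{k}^{\mu_{+}}$ is a supersolution, that is, $-(\Delta+k^{2})W_{k}^{\mu_{+}}\ge\eta(W_{k}^{\mu_{+}},\mu)$. By definition $-(\Delta+k^{2})W_{k}^{\mu_{+}}=\mu_{+}-\Bal_{k}(\mu_{+})$, and the structure formula \eqref{eq:structure-balayage} with $D=\mathbb{R}^{n}$ (so that $\nu=0$, as $\partial D=\emptyset$) gives $\Bal_{k}(\mu_{+})=\sfm|_{\omega_{k}(\mu_{+})}+\mu_{+}|_{\mathbb{R}^{n}\setminus\omega_{k}(\mu_{+})}$, whence $\mu_{+}-\Bal_{k}(\mu_{+})=(\mu_{+}-1)|_{\omega_{k}(\mu_{+})}$. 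The bound $\Bal_{k}(\mu_{+})\le 1$ forces $\mu_{+}|_{\mathbb{R}^{n}\setminus\omega_{k}(\mu_{+})}\le 1$, so $(\mu_{+}-1)_{+}$ is carried by $\omega_{k}(\mu_{+})=\{W_{k}^{\mu_{+}}>0\}$; combined with $\{W_{k}^{\mu_{+}}<0\}=\emptyset$, unwinding the definition of $\eta$ yields $\eta(W_{k}^{\mu_{+}},\mu)=(\mu_{+}-1)_{+}-(\mu_{+}-1)_{-}|_{\omega_{k}(\mu_{+})}-(\mu_{-}-1)_{+}=(\mu_{+}-1)|_{\omega_{k}(\mu_{+})}-(\mu_{-}-1)_{+}$. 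Hence $-(\Delta+k^{2})W_{k}^{\mu_{+}}=\eta(W_{k}^{\mu_{+}},\mu)+(\mu_{-}-1)_{+}\ge\eta(W_{k}^{\mu_{+}},\mu)$. The delicate part is exactly this bookkeeping with Jordan decompositions and support localizations; once it is settled, Lemma~\ref{lem:GS12TwoPhaseQD-Thm4.3a} applied with $u_{1}=W_{k}^{\mu_{+}}$ (supersolution) and $u_{2}=u$ (subsolution, by hypothesis) gives $u\le W_{k}^{\mu_{+}}$, which is \ref{itm:lem:GS12TwoPhaseQD-Thm4.3b-a}.

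For \ref{itm:lem:GS12TwoPhaseQD-Thm4.3b-b} I would invoke \ref{itm:lem:GS12TwoPhaseQD-Thm4.3b-a} applied to $-u$ with $\mu$ replaced by $-\mu=\mu_{-}-\mu_{+}$ (so that the roles of $\mu_{+}$ and $\mu_{-}$ are interchanged, and \eqref{eq:k-small-condition} is unaffected). Using $\eta(-u,-\mu)=-\eta(u,\mu)$ from Lemma~\ref{lem:GS12TwoPhaseQD-lem4.1}\ref{itm:GS12TwoPhaseQD-lem4.1_a}, the hypothesis $-(\Delta+k^{2})u\ge\eta(u,\mu)$ turns into $-(\Delta+k^{2})(-u)\le\eta(-u,-\mu)$, and $\{-u\neq 0\}=\{u\neq 0\}\subset\omega_{k}(\mu_{+}+\mu_{-})$; hence \ref{itm:lem:GS12TwoPhaseQD-Thm4.3b-a} yields $-u\le W_{k}^{\mu_{-}}$, i.e.\ $u\ge -W_{k}^{\mu_{-}}$. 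Finally $u\in\tau_{k,\mu}$ since all three defining conditions hold: $u$ is $\delta$-$k$-metasubharmonic and $-(\Delta+k^{2})u\ge\eta(u,\mu)$ by hypothesis, and $u\ge -W_{k}^{\mu_{-}}$ by what was just shown.
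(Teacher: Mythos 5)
Your proposal is correct and follows essentially the same route as the paper: verify via the structure formula \eqref{eq:structure-balayage} that $-(\Delta+k^{2})W_{k}^{\mu_{+}}=(\mu_{+}-1)|_{\omega_{k}(\mu_{+})}\ge\eta(W_{k}^{\mu_{+}},\mu)$, then apply the comparison Lemma~\ref{lem:GS12TwoPhaseQD-Thm4.3a} on $\tilde{\Omega}=\omega_{k}(\mu_{+}+\mu_{-})$ using \eqref{eq:k-maximum-principle-assump}--\eqref{relation-k-maximum-domains}. Your part \ref{itm:lem:GS12TwoPhaseQD-Thm4.3b-b} deduces the bound by applying part \ref{itm:lem:GS12TwoPhaseQD-Thm4.3b-a} to $(-u,-\mu)$ rather than re-running the comparison lemma with $u_{1}=u$, $u_{2}=-W_{k}^{\mu_{-}}$ as the paper does, but via Lemma~\ref{lem:GS12TwoPhaseQD-lem4.1}\ref{itm:GS12TwoPhaseQD-lem4.1_a} these are the same argument.
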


\begin{proof}
First of all, we remind the readers that $W_{k}^{\mu_{+}}$ is non-negative (see the definition of $V_{k}^{\mu_{+}}$ and the definition of $\mF_{k}(\mu_{+})$), $\delta$-$k$-metasubharmonic and has compact support. Since $\Bal_{k}(\mu_{+}) \le 1$ in $\mR^{n}$, by the structure of partial balayage \eqref{eq:structure-balayage} we see that   
\begin{equation*}
\left. \mu_{+} \right|_{{\{W_{k}^{\mu_{+}}}=0\}} = \left. \mu_{+} \right|_{\mR^{n}\setminus\omega_{k}(\mu_{+})} \le 1. 
\end{equation*}
Consequently, together with Lemma~\ref{lem:GS12TwoPhaseQD-lem4.1}\ref{itm:GS12TwoPhaseQD-lem4.1_c} we compute that 
\begin{equation*}
\begin{aligned}
& -(\Delta + k^{2}) W_{k}^{\mu_{+}} = \mu_{+} - \Bal_{k}(\mu_{+}) = \mu_{+} - \sfm|_{\{W_{k}^{\mu_{+}}>0\}} - \left. \mu_{+} \right|_{\{W_{k}^{\mu_{+}}=0\}} \\
& \quad = \left. (\mu_{+} - 1) \right|_{\{W_{k}^{\mu_{+}}>0\}} = (\mu_{+} - 1)_{+} - \left. (\mu_{+} - 1)_{-} \right|_{\{W_{k}^{\mu_{+}}>0\}} = \eta(W_{k}^{\mu_{+}},\mu_{+}) \\ 
& \quad \ge \eta(W_{k}^{\mu_{+}},\mu). 
\end{aligned}
\end{equation*}
Now we choose $u_{1} = W_{k}^{\mu_{+}}$, $u_{2}=u$ and $\tilde{\Omega} = \omega_{k}(\mu_{+}+\mu_{-})$ in Lemma~\ref{lem:GS12TwoPhaseQD-Thm4.3a} (together with \eqref{eq:k-maximum-principle-assump}--\eqref{relation-k-maximum-domains}) to conclude $u \le W_{k}^{\mu_{+}}$ and we complete the proof of Lemma~\ref{lem:GS12TwoPhaseQD-Thm4.3b}\ref{itm:lem:GS12TwoPhaseQD-Thm4.3b-a}.

We now replacing $\mu$ with $-\mu$ to obtain 
\begin{equation*}
\begin{aligned} 
& -(\Delta + k^{2}) (-W_{k}^{\mu_{-}}) = (\Delta + k^{2})W_{k}^{(-\mu)_{+}} = \eta(W_{k}^{(-\mu)_{+}},(-\mu)_{+}) \\ 
& \quad \le -\eta(W_{k}^{\mu_{-}},-\mu) = \eta(-W_{k}^{\mu_{-}},\mu), 
\end{aligned} 
\end{equation*}
where the last equality follows from Lemma~\ref{lem:GS12TwoPhaseQD-lem4.1}\ref{itm:GS12TwoPhaseQD-lem4.1_a}. Now we choose $u_{1}=u$, $u_{2} = - W_{k}^{\mu_{-}}$ and $\tilde{\Omega} = \omega_{k}(\mu_{+}+\mu_{-})$ in Lemma~\ref{lem:GS12TwoPhaseQD-Thm4.3a} (together with \eqref{eq:k-maximum-principle-assump}--\eqref{relation-k-maximum-domains}) to conclude $- W_{k}^{\mu_{-}} \le u$ and we complete the proof of Lemma~\ref{lem:GS12TwoPhaseQD-Thm4.3b}\ref{itm:lem:GS12TwoPhaseQD-Thm4.3b-b}.
\end{proof}

We now follow the arguments in \cite[Theorem~4.4, Theorem~4.5, Corollary~4.6, and Remark~1]{GS12TwoPhaseQD} to establish the following lemma.

\begin{lem}\label{lem:construction-2phase}
Let $\mu_{\pm}$ be positive measures with disjoint compact supports in $\mR^{n}$, and let $k>0$ satisfies \eqref{eq:k-small-condition}. If we write $\mu=\mu_{+}-\mu_{-}$, then the set $\tau_{k,\mu}$ contains a least element $\overline{W}_{k}^{\mu}$ with $\{\overline{W}_{k}^{\mu} \neq 0\} \subset \omega_{k}(\mu_{+}+\mu_{-})$\footnote{Since $W_{k}^{\mu_{+}} \ge 0 \ge -W_{k}^{\mu_{-}}$, then $W_{k}^{\mu_{+}} \in \tau_{k,\mu}$. By definition of $\tau_{k,\mu}$ and the minimality of $\overline{W}_{k}^{\mu}$, one has $W_{k}^{\mu_{+}} \ge \overline{W}_{k}^{\mu} \ge -W_{k}^{\mu_{-}}$. Consequently by \eqref{relation-k-maximum-domains}, one reaches $\{\overline{W}_{k}^{\mu} \neq 0\} \subset \omega_{k}(\mu_{+}+\mu_{-})$. This condition is essential when applying Lemma~\ref{lem:GS12TwoPhaseQD-Thm4.3b}.}. If the following support conditions hold: 
\begin{equation}
\supp\,(\mu_{\pm}) \subset D_{\pm} := \left\{ \pm\overline{W}_{k}^{\mu} > 0 \right\}, \label{eq:support-assumption}
\end{equation}
then both $D_{\pm}$ are open set in $\mR^{n}$ and the pair of domains $(D_{+},D_{-})$ is a two-phase $(k,k)$-quadrature domain with $\lambda_{+}=\lambda_{-}=1$, that is, $\tilde{u} := \overline{W}_{k}^{\mu}$ satisfies the model equation \eqref{eq:main-2-phase} with $k_{+}=k_{-}=k$ and $\lambda_{+}=\lambda_{-}=1$.
\end{lem}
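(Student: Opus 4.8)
The plan is to follow the well-trodden path of Gardiner--Sj\"odin \cite[Theorems~4.4--4.6]{GS12TwoPhaseQD}, using the technical lemmas just established (Lemmas~\ref{lem:GS12TwoPhaseQD-lem4.1}--\ref{lem:GS12TwoPhaseQD-Thm4.3b}) as the $k$-analogues of their potential-theoretic ingredients. First I would establish that $\tau_{k,\mu}$ is nonempty: since $W_{k}^{\mu_{+}} \ge 0 \ge -W_{k}^{\mu_{-}}$, Lemma~\ref{lem:GS12TwoPhaseQD-lem4.1}\ref{itm:GS12TwoPhaseQD-lem4.1_c} gives $-(\Delta+k^{2})W_{k}^{\mu_{+}} = \eta(W_{k}^{\mu_{+}},\mu_{+}) \ge \eta(W_{k}^{\mu_{+}},\mu)$, so $W_{k}^{\mu_{+}} \in \tau_{k,\mu}$. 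Lemma~\ref{lem:GS12TwoPhaseQD-lem4.2} shows $\tau_{k,\mu}$ is closed under pairwise minima (after passing through the $k$-metasuperharmonic shift $\tau_{k,\mu}'$). Then the existence of a least element $\overline{W}_{k}^{\mu}$ follows from standard potential-theoretic arguments (as in \cite[Section~3.7]{AG01Potential} or \cite[Theorem~4.4]{GS12TwoPhaseQD}): one takes an infimum over $\tau_{k,\mu}'$, uses the $\min$-stability to reduce to a decreasing sequence, and shows the limit (suitably regularized on a polar set) still lies in $\tau_{k,\mu}'$, hence translating back, in $\tau_{k,\mu}$. The pinch $W_{k}^{\mu_{+}} \ge \overline{W}_{k}^{\mu} \ge -W_{k}^{\mu_{-}}$ together with \eqref{relation-k-maximum-domains} gives the containment $\{\overline{W}_{k}^{\mu} \neq 0\} \subset \omega_{k}(\mu_{+}+\mu_{-})$ as noted in the footnote.

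Next, assuming the support condition \eqref{eq:support-assumption}, I would identify the equation satisfied by $\overline{W}_{k}^{\mu}$. By minimality and the $\min$-stability, $\overline{W}_{k}^{\mu}$ should satisfy $-(\Delta+k^{2})\overline{W}_{k}^{\mu} = \eta(\overline{W}_{k}^{\mu},\mu)$ as an equality rather than merely the inequality built into $\tau_{k,\mu}$; this is the standard ``the least element saturates the constraint'' argument: if strict inequality held on a set of positive measure (off the coincidence set with $-W_{k}^{\mu_{-}}$), one could lower $\overline{W}_{k}^{\mu}$ slightly and contradict minimality, while on $\{\overline{W}_{k}^{\mu} = -W_{k}^{\mu_{-}}\}$ one uses Lemma~\ref{lem:GS12TwoPhaseQD-Thm4.3b}\ref{itm:lem:GS12TwoPhaseQD-Thm4.3b-b} applied appropriately. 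Once this PDE equality is in hand, I unwind the definition of $\eta$: since $\supp(\mu_{+}) \subset D_{+} = \{\overline{W}_{k}^{\mu}>0\}$ and $\supp(\mu_{-}) \subset D_{-}$, on $D_{+}$ we have $\eta(\overline{W}_{k}^{\mu},\mu) = (\mu_{+}-1)$ restricted there, i.e. $-(\Delta+k^{2})\overline{W}_{k}^{\mu} = \mu_{+} - 1$ on $D_{+}$, and symmetrically $-(\Delta+k^{2})\overline{W}_{k}^{\mu} = -(\mu_{-}-1) = 1-\mu_{-}$ on $D_{-}$, and $\eta(\overline{W}_{k}^{\mu},\mu)=0$ where $\overline{W}_{k}^{\mu}=0$. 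Writing $\tilde u = \overline{W}_{k}^{\mu}$ and $\tilde u_{\pm}$ for its positive/negative parts, and using $\tilde u = \tilde u_{+} - \tilde u_{-}$ with $k_{+}=k_{-}=k$, this reassembles precisely into \eqref{eq:main-2-phase} with $\lambda_{+}=\lambda_{-}=1$: $\Delta\tilde u + k^{2}\tilde u_{+} - k^{2}\tilde u_{-} = (1-\mu_{+})\chi_{\{\tilde u>0\}} - (1-\mu_{-})\chi_{\{\tilde u<0\}}$.

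Then I would verify that $D_{\pm}$ are genuinely open. This is where the fine-topology refinement of $\delta$-$k$-metasubharmonic functions enters: a priori $\overline{W}_{k}^{\mu}$ is only defined up to polar sets, but after the canonical assignment of $\pm\infty$-values described in the excerpt, and because near $\partial D_{+}$ the function $\overline{W}_{k}^{\mu}$ is (locally, off $\supp\mu_+$) $k$-metasuperharmonic where it is $\le 0$ and $k$-metasubharmonic considerations apply where it is $\ge 0$, one shows $\{\overline{W}_{k}^{\mu}>0\}$ and $\{\overline{W}_{k}^{\mu}<0\}$ are finely open and in fact open in the Euclidean sense — exactly as in \cite[Theorem~4.5 and Corollary~4.6]{GS12TwoPhaseQD}. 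Disjointness of $D_+$ and $D_-$ is immediate since $\overline{W}_{k}^{\mu}$ cannot be simultaneously positive and negative. Finally, having the model equation \eqref{eq:main-2-phase} with the support condition \eqref{eq:support-assumption} $=$ \eqref{eq:support-condition-2-phase}, one invokes the equivalence between \eqref{eq:main-2-phase} and the two-phase $k$-quadrature identity (the $k$-analogue of \cite[Theorem~3.1(a)]{GS12TwoPhaseQD}, already recorded in the introductory discussion and in \cite{KLSS22QuadratureDomain}) to conclude $(D_+,D_-)$ is a two-phase $(k,k)$-quadrature domain.

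I expect the main obstacle to be the openness of $D_{\pm}$ and the justification that the least element saturates its defining inequality — both require careful handling of the polar/fine-topology subtleties that are invisible in the classical $k=0$ case only because the tools are more developed there. Concretely, the delicate point is that Lemma~\ref{lem:GS12TwoPhaseQD-Thm4.3b} only applies under the containment $\{u\neq 0\}\subset\omega_k(\mu_++\mu_-)$, so each step of the extremality argument must stay within that regime; keeping track of this while lowering test competitors is the technical heart. Everything else is a faithful transcription of \cite[Section~4]{GS12TwoPhaseQD} with $\Delta$ replaced by $\Delta+k^2$, Kato's inequality replaced by its metasubharmonic version \cite[Lemma~2.4]{GS24PartialBalayageHelmholtz}, and the classical maximum principle replaced by the $k$-maximum principle \ref{itm:maximum-equivalence-i}--\ref{itm:maximum-equivalence-iv} on $\omega_k(\mu_++\mu_-)$, which is available thanks to \eqref{eq:k-maximum-principle-assump}.
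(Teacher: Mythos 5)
Your proposal is correct and follows essentially the same route as the paper, which itself proves Lemma~\ref{lem:construction-2phase} only by pointing to \cite[Theorem~4.4, Theorem~4.5, Corollary~4.6, and Remark~1]{GS12TwoPhaseQD} together with the $k$-analogue ingredients (Lemmas~\ref{lem:GS12TwoPhaseQD-lem4.1}--\ref{lem:GS12TwoPhaseQD-Thm4.3b}, the $k$-maximum principle on $\omega_{k}(\mu_{+}+\mu_{-})$, and Kato's inequality in the form of \cite[Lemma~2.4]{GS24PartialBalayageHelmholtz}). Your expansion of the least-element construction, the saturation of the constraint, the unwinding of $\eta$ under \eqref{eq:support-assumption}, and the openness of $D_{\pm}$ matches that intended argument step for step.
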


We now ready to prove Theorem~\ref{thm:construction-2-phase-partial-balayage}.

\begin{proof}[Proof of Theorem~\ref{thm:construction-2-phase-partial-balayage}]
We define 
\begin{equation*}
u := W_{k}^{\mu_{+}} - W_{k,\mR^{n}\setminus\overline{\omega(\mu_{+})}}^{\mu_{-}} ,\quad v:= W_{k,\mR^{n}\setminus\overline{\omega(\mu_{-})}}^{\mu_{+}} - W_{k}^{\mu_{-}}, 
\end{equation*}
and using the disjoint condition between $\mu_{\pm}$ \eqref{eq:disjoint-condition-partial-balayage} we observe that 
\begin{equation}
\begin{aligned} 
& \{u<0\} = \omega_{k,\mR^{n}\setminus\overline{\omega(\mu_{+})}} (\mu_{-}) ,\quad \{u>0\} = \omega_{k}(\mu_{+}), \\
& \{v>0\} = \omega_{k,\mR^{n}\setminus\overline{\omega(\mu_{-})}} (\mu_{+}) ,\quad \{v<0\} = \omega_{k}(\mu_{-}). 
\end{aligned}\label{eq:observation-for-support-condition}
\end{equation}
Combining \eqref{eq:extra-support-condition} and \eqref{eq:observation-for-support-condition}, we reach 
\begin{equation}
\supp\,(\mu_{+}) \subset \{v>0\} ,\quad \supp\,(\mu_{-}) \subset \{u<0\}. \label{eq:extra-support-condition-rephrase}
\end{equation}
By using \eqref{relation-k-maximum-domains}, we see that all the sets mentioned in \eqref{eq:observation-for-support-condition} are contained in $\omega_{k}(\mu_{+}+\mu_{-})$, and thus 
\begin{equation}
\{u \neq 0\} \cup \{v \neq 0\} \subset \omega_{k}(\mu_{+}+\mu_{-}). \label{eq:verification-support-conditions}
\end{equation}
For readers' convenience, we recall \eqref{eq:k-maximum-principle-assump}: $\omega_{k}(\mu_{+}+\mu_{-})$ satisfies the $k$-maximum principle \ref{itm:maximum-equivalence-i}--\ref{itm:maximum-equivalence-iv}.

On the other hand, by using the structure of partial balayage \eqref{eq:structure-balayage}, from \eqref{eq:observation-for-support-condition} one sees that 
\begin{equation*}
\begin{aligned} 
& -(\Delta + k^{2})u = \mu_{+} - \Bal_{k}(\mu_{{+}}) - \mu_{-} + \Bal_{k,\mR^{n}\setminus\overline{\omega(\mu_{+})}}(\mu_{{-}}) \\
& \quad = (\mu_{+} - 1)|_{\omega_{k}(\mu_{+})} - (\mu_{-} - 1)|_{\omega_{k,\mR^{n}\setminus\overline{\omega(\mu_{+})}}(\mu_{-})} + \nu \\
& \quad \ge (\mu_{+} - 1)|_{\{u>0\}} - (\mu_{-} - 1)|_{\{u<0\}}.  
\end{aligned} 
\end{equation*}
In view of the structure of partial balayage \eqref{eq:structure-balayage} (with $D =\mR^{n}$), one observes that $\mu_{+} \le 1$ outside $\omega_{k}(\mu_{+})=\{u>0\}$, hence one sees that 
\begin{equation}
-(\Delta + k^{2})u \ge (\mu_{+} - 1)|_{\{u>0\}} - (\mu_{-} - 1)|_{\{u<0\}} = \eta(u,\mu). \label{eq:check-lem:GS12TwoPhaseQD-Thm4.3b}
\end{equation}
Combining \eqref{eq:check-lem:GS12TwoPhaseQD-Thm4.3b} and \eqref{eq:verification-support-conditions} with Lemma~\ref{lem:GS12TwoPhaseQD-Thm4.3b}, we conclude $u \in \tau_{k,\mu}$, and we reach $u \ge \overline{W}_{k}^{\mu}$. Consequently, from \eqref{eq:extra-support-condition-rephrase} we conclude that 
\begin{subequations} 
\begin{equation}
\supp\,(\mu_{-}) \subset \{u<0\} \subset \{ \overline{W}_{k}^{\mu}<0 \}. \label{eq:verify-sufficient-condition1}
\end{equation}
One can similar show that 
\begin{equation}
\supp\,(\mu_{+}) \subset \{v>0\} \subset \{ \overline{W}_{k}^{\mu}>0 \}. \label{eq:verify-sufficient-condition2}
\end{equation}
\end{subequations} 
This means that the support conditions \eqref{eq:support-assumption} are verified by \eqref{eq:verify-sufficient-condition1} and \eqref{eq:verify-sufficient-condition2}. 
Using the second part of Lemma~\ref{lem:construction-2phase},
we can   conclude our theorem defining 
\begin{equation} 
D_{\pm} = \left\{ \pm\overline{W}_{k}^{\mu} > 0 \right\} ,\quad \tilde{u} = \overline{W}_{k}^{\mu} .\label{eq:representation-2-phase}
\end{equation} 
\end{proof}

Using Theorem~\ref{thm:construction-2-phase-partial-balayage}, 
we can prove Theorem~\ref{thm:construction-2-phase-partial-balayage-concentration} following the ideas in \cite[Corollary~5.2]{GS12TwoPhaseQD}. 

\begin{proof}[Proof of Theorem~\ref{thm:construction-2-phase-partial-balayage-concentration}]
First of all, let $c_{n}$ be the small positive constant $($depending only on dimension$)$ described in \cite[Theorem~7.1]{KLSS22QuadratureDomain}. Let $x \in \supp\,(\mu_{+})$ and from \eqref{eq:concentration-condition} there exists a decreasing sequence of positive numbers $\{r_{j}\}$ which converges to 0 such that 
\begin{equation*}
\mu_{+}(B_{r_{j}}(x)) > \frac{1}{c_{n}}r_{j}^{n} \quad \text{for all $j\in\mN$.}
\end{equation*}
If $\mu_{+}(\{x\})=0$, then from \eqref{eq:disjoint-condition-partial-balayage} we know that there exists $j$ such that 
\begin{equation*}
\omega_{k}\left(\left.\mu_{+}\right|_{B_{r_{j}}(x)}\right) \subset \mR^{n}\setminus\overline{\omega_{k}(\mu_{-})}. 
\end{equation*}
Applying \cite[Theorem~7.1]{KLSS22QuadratureDomain} to the measure $\left.\mu_{+}\right|_{B_{r_{j}}(x)}$ we see that  
\begin{equation}
x \in B_{r_{j}}(x) \subset \omega_{k}\left(\left.\mu_{+}\right|_{B_{r_{j}}(x)}\right) = \omega_{k,\mR^{n}\setminus\overline{\omega_{k}(\mu_{-})}}\left(\left.\mu_{+}\right|_{B_{r_{j}}(x)}\right) \subset \omega_{k,\mR^{n}\setminus\overline{\omega_{k}(\mu_{-})}}\left(\mu_{+}\right). \label{eq:verify1}
\end{equation}
If $\mu_{+}(\{x\})>0$, there exists $\epsilon>0$ such that $\epsilon \delta_{x} \le \mu_{+}$, using similar arguments as in \cite[Lemma~3.3]{GS24PartialBalayageHelmholtz}, one can show that 
\begin{equation}
x \in \omega_{k,\mR^{n}\setminus\overline{\omega_{k}(\mu_{-})}}(\epsilon \delta_{x}) \subset \omega_{k,\mR^{n}\setminus\overline{\omega_{k}(\mu_{-})}}(\mu_{+}). \label{eq:verify2}
\end{equation}
We now combine \eqref{eq:verify1} and \eqref{eq:verify2} to conclude 
\begin{equation*}
\supp\,(\mu_{+}) \subset \omega_{k,\mR^{n}\setminus\overline{\omega_{k}(\mu_{-})}}(\mu_{+}). 
\end{equation*}
Similar arguments also work for $\mu_{-}$, verifying   condition \eqref{eq:extra-support-condition}. Possibly replacing $c_{n}$ by a smaller constant, still depending only on dimension $n$, the condition \eqref{eq:k-small-condition} can be guaranteed by \eqref{eq:smallness-wavenumber-k}. Therefore we conclude Theorem~\ref{thm:construction-2-phase-partial-balayage-concentration} using Theorem~\ref{thm:construction-2-phase-partial-balayage}. 
\end{proof}

\appendix 

\section{The Pompeiu problem  (A minimization viewpoint)\label{appen:saddle-point}}

Given a ball $B_{R}$ with radius $R>0$  we consider the functional corresponding to $k_{1}=k_{2}=k$, 
$f_{1}=-1$, $f_{2}=1$ (This corresponds to Theorem~\ref{thm:local-properties-unbounded-f} with $\mu_{i}=0$ and $\lambda_1=1$, and $\lambda_2=-1$): 
\begin{equation*}
\tilde{\mJ}_{k,k}(U) = \int_{B_{R}} \left( \abs{\nabla U}^{2} - k^{2} U^{2} + 2U \right) \, \rmd x \quad \text{$U\in H_{0}^{1}(B_{R})$.}
\end{equation*}
In view of Theorem~\ref{thm:local-properties}, we consider the function $\tilde{u}$ which is a solution of the following (unstable) two-phase problem, or no-sign one-phase problem:   
\begin{equation}
(\Delta + k^{2})\tilde{u} = \chi_{D} \text{ in $\mR^{n}$} ,\quad \tilde{u}=0 \text{ outside $D$,} \label{eq:global-minimizer-2phase-1}
\end{equation}
provided $\overline{D}\subset B_{R}$. If $k < k_{*}(B_{R}) = j_{\frac{n-2}{2},1}k^{-1}$, then the (unique) minimizer of $\tilde{\mJ}_{k,k}$ must trivial. This suggests us to consider the case when $k > k_{*}(B_{R})$. In this case, the above functional is indeed unbounded below in $H_{0}^{1}(B_{R})$, see \cite[Lemma~3.1]{KSS23Minimization} or a more concrete example below in \eqref{eq:unbounded-below}. The main theme of this appendix is to exhibit some interesting points of $\tilde{\mJ}_{k,k}$ as below: 

\begin{thm}\label{thm:saddle-point}
Let $\tilde{u}$ be a solution of \eqref{eq:global-minimizer-2phase-1} for some bounded Lipschitz domain $D$ in $\mR^{n}$, then it satisfies 
\begin{equation}
\norm{\nabla\tilde{u}}_{L^{2}(D)}^{2} = \frac{n}{2k^{2}}\abs{D} ,\quad k^{2}\norm{\tilde{u}}_{L^{2}(D)}^{2} = \frac{n+2}{2k^{2}}\abs{D}, \label{eq:energy-estimate-Pompeiu-3}
\end{equation}
and it is neither a local minima nor local maxima of the functional $\tilde{\mJ}_{k,k}$ in $H_{0}^{1}(B_{R})$ for each $R>0$ with $\overline{D}\subset B_{R}$. 
\end{thm}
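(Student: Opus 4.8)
The plan is to establish the two energy identities in \eqref{eq:energy-estimate-Pompeiu-3} and the assertion about local extrema separately. Both rely on a preliminary regularity remark: since $(\Delta+k^{2})\tilde{u}=\chi_{D}\in L^{\infty}$ has compact support and $\tilde{u}$ vanishes outside $D$, elliptic regularity gives $\tilde{u}\in W^{2,p}(\mR^{n})$ for every $p<\infty$, hence $\tilde{u}\in C^{1,\alpha}(\mR^{n})\cap H^{2}(\mR^{n})$ with $\supp\,(\tilde{u})\subset\overline{D}$. In particular $\tilde{u}$ and $\nabla\tilde{u}$ are compactly supported, so all the integrations by parts over $\mR^{n}$ below carry no boundary terms at infinity.

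To get \eqref{eq:energy-estimate-Pompeiu-3} I would test the equation against three functions. Testing against the constant $1$ and using $\int_{\mR^{n}}\Delta\tilde{u}=0$ gives $k^{2}\int_{D}\tilde{u}\,\rmd x=\abs{D}$. Testing against $\tilde{u}$ gives $k^{2}\norm{\tilde{u}}_{L^{2}(D)}^{2}-\norm{\nabla\tilde{u}}_{L^{2}(D)}^{2}=\int_{D}\tilde{u}\,\rmd x=\abs{D}/k^{2}$. Testing against the Rellich multiplier $x\cdot\nabla\tilde{u}$ and using the standard identities
\[
\int_{\mR^{n}}(\Delta\tilde{u})(x\cdot\nabla\tilde{u})\,\rmd x=\frac{n-2}{2}\norm{\nabla\tilde{u}}_{L^{2}(\mR^{n})}^{2},\qquad \int_{\mR^{n}}\tilde{u}\,(x\cdot\nabla\tilde{u})\,\rmd x=-\frac{n}{2}\norm{\tilde{u}}_{L^{2}(\mR^{n})}^{2},
\]
together with $\int_{D}(x\cdot\nabla\tilde{u})\,\rmd x=-n\int_{D}\tilde{u}\,\rmd x$, produces $\frac{n-2}{2}\norm{\nabla\tilde{u}}_{L^{2}(D)}^{2}-\frac{n}{2}k^{2}\norm{\tilde{u}}_{L^{2}(D)}^{2}=-n\abs{D}/k^{2}$. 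Solving the resulting $2\times2$ linear system in the unknowns $\norm{\nabla\tilde{u}}_{L^{2}(D)}^{2}$ and $k^{2}\norm{\tilde{u}}_{L^{2}(D)}^{2}$ gives precisely \eqref{eq:energy-estimate-Pompeiu-3}.

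For the last claim I would use an explicit perturbation. Since $\overline{D}\subset B_{R}$, the set $B_{R}\setminus\overline{D}$ is a non-empty open set, so pick $\phi\in C_{c}^{\infty}(B_{R}\setminus\overline{D})$ with $\phi\ge0$, $\phi\not\equiv0$. As $\tilde{u}\equiv0$ on the open set $B_{R}\setminus\overline{D}\supset\supp\,(\phi)$, expanding the (quadratic) functional yields, for all $t\in\mR$,
\[
\tilde{\mJ}_{k,k}(\tilde{u}+t\phi)=\tilde{\mJ}_{k,k}(\tilde{u})+2t\int_{B_{R}}\phi\,\rmd x+t^{2}\int_{B_{R}}\big(\abs{\nabla\phi}^{2}-k^{2}\phi^{2}\big)\,\rmd x,
\]
a quadratic polynomial in $t$ whose linear coefficient $2\int_{B_{R}}\phi\,\rmd x$ is \emph{strictly positive}. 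Hence for small $t>0$ it strictly exceeds $\tilde{\mJ}_{k,k}(\tilde{u})$ and for small $t<0$ it is strictly below it, while $\tilde{u}+t\phi\to\tilde{u}$ in $H_{0}^{1}(B_{R})$; thus $\tilde{u}$ is neither a local minimum nor a local maximum. (The same computation in fact shows $\tilde{u}$ is not even a critical point of $\tilde{\mJ}_{k,k}$, since it solves $(\Delta+k^{2})\tilde{u}=\chi_{D}$ and not the Euler--Lagrange equation $(\Delta+k^{2})\tilde{u}=1$ of $\tilde{\mJ}_{k,k}$ on $B_{R}$.)

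The one point that needs care — and which I expect to be the main (though mild) technical obstacle — is the Rellich--Pohozaev step, since a priori $\tilde{u}$ is only $W^{2,p}$ and $D$ only Lipschitz. This is handled by noting that $\tilde{u}\in H^{2}(\mR^{n})$ with compact support, so the multiplier identities above follow by a direct integration by parts (alternatively, mollify $\tilde{u}$, apply the identities to the smooth approximants, and pass to the limit), and the boundary term $\int_{\partial D}\tilde{u}\,(x\cdot\nu)\,\rmd\sigma$ vanishes because $\tilde{u}\in C^{1}(\mR^{n})$ and $\tilde{u}|_{\partial D}=0$. Everything else is elementary.
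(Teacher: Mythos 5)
Your proof is correct. The derivation of \eqref{eq:energy-estimate-Pompeiu-3} is essentially the paper's argument: test against $1$, against $\tilde{u}$, and against the Rellich multiplier $x\cdot\nabla\tilde{u}$, then solve the resulting $2\times 2$ system; your variant of working over all of $\mR^{n}$ with the compactly supported $H^{2}$ function $\tilde{u}$ (rather than over $D$ with the overdetermined boundary conditions $\tilde{u}=\abs{\nabla\tilde{u}}=0$ on $\partial D$, as the paper does) is a harmless and in fact slightly cleaner way to dispose of the boundary terms on the merely Lipschitz $\partial D$. Where you genuinely diverge is the extremality claim. The paper argues in two separate steps: it computes $\tilde{\mJ}_{k,k}(t\tilde{u})=(-t^{2}+2t)\abs{D}/k^{2}$, so that $t=1$ is a strict maximum along the ray $t\mapsto t\tilde{u}$ (ruling out a local minimum and showing unboundedness below), and then perturbs by a Dirichlet eigenfunction $\phi$ of $B_{R}$ with eigenvalue $k_{0}^{2}>k^{2}$ to rule out a local maximum. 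You instead perturb by a single nonnegative bump $\phi$ supported in the nonempty open set $B_{R}\setminus\overline{D}$, where $\tilde{u}$ and $\nabla\tilde{u}$ vanish identically, so the expansion of the quadratic functional has linear coefficient $2\int\phi>0$; this shows at once that $\tilde{u}$ is not even a critical point of $\tilde{\mJ}_{k,k}$ on $H_{0}^{1}(B_{R})$ (its Euler--Lagrange equation is $(\Delta+k^{2})U=1$ in $B_{R}$, whereas $\tilde{u}$ solves $(\Delta+k^{2})\tilde{u}=\chi_{D}$), hence neither a local minimum nor a local maximum. Your route is more elementary and yields a slightly stronger conclusion in one stroke; what it does not reproduce is the paper's explicit formula $\max_{t}\tilde{\mJ}_{k,k}(t\tilde{u})=\tilde{\mJ}_{k,k}(\tilde{u})=\abs{D}/k^{2}$ and the attendant unboundedness from below in \eqref{eq:unbounded-below}, which the paper uses for its subsequent discussion but which is not part of the theorem's assertion.
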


\begin{rem*}
See Example~\ref{exa:2-dimensional-case} below for an example of such $D$ in \eqref{eq:global-minimizer-2phase-1}. 
\end{rem*}

\begin{proof}[Proof of Theorem~\ref{thm:saddle-point}]
By integrating the identity $\nabla \cdot (x\abs{u}^{2}) = n\abs{u}^{2} + 2 u x \cdot \nabla u$ over $D$, one can easily obtain 
\begin{subequations}
\begin{equation}
(u , x\cdot\nabla u)_{L^{2}(D)} = - \frac{n}{2}\norm{u}_{L^{2}(D)}^{2} + \frac{1}{2} \br{x\cdot\nu,\abs{u}^{2}}_{\partial D} \label{eq:integral-identity1}
\end{equation}
for all real-valued $u\in H^{1}(D)$. On the other hand, combining the equations\footnote{These differential identities are suggested in \cite{CF06Helmholtz,FLL15IIPHelmholtz}.}
\begin{equation*}
\nabla\cdot(x\abs{\nabla u}^{2}) = n \abs{\nabla u}^{2} + x \cdot \nabla (\abs{\nabla u}^{2})
\end{equation*}
and 
\begin{equation*}
\begin{aligned}
& \quad x \cdot \nabla (\abs{\nabla u}^{2}) = 2 \left( \nabla\cdot(\nabla u(x\cdot\nabla u)) - \Delta u(x\cdot\nabla u)\right) - 2\abs{\nabla u}^{2} \\
& \quad = 2 \left( \nabla u\cdot\nabla(x\cdot\nabla u)\right) - 2\abs{\nabla u}^{2},
\end{aligned}
\end{equation*}
we obtain 
\begin{equation*}
\nabla u\cdot\nabla(x\cdot\nabla u) = \frac{2-n}{2} \abs{\nabla u}^{2} + \frac{1}{2} \nabla\cdot(x\abs{\nabla u}^{2}).
\end{equation*}
Then by integrating the above identity over $D$ we obtain 
\begin{equation}
(\nabla u,\nabla(x\cdot\nabla u))_{L^{2}(D)} = \frac{2-n}{2} \norm{\nabla u}_{L^{2}(D)}^{2} + \frac{1}{2}\br{x\cdot\nu , \abs{\nabla u}^{2}}_{\partial D}. \label{eq:integral-identity2}
\end{equation}
\end{subequations}
for all real-valued $u\in H^{2}(D)$. See also \cite[Lemma~2.3]{FLL15IIPHelmholtz} for a probabilistic version of \eqref{eq:integral-identity1}--\eqref{eq:integral-identity2}. 

If we choose $u=\tilde{u}$, then from \eqref{eq:integral-identity1}--\eqref{eq:integral-identity2} we see that 
\begin{equation*}
(\tilde{u},x\cdot\nabla\tilde{u})_{L^{2}(D)} = -\frac{n}{2}\norm{\tilde{u}}_{L^{2}(D)}^{2} ,\quad (\nabla \tilde{u},\nabla(x\cdot\nabla \tilde{u}))_{L^{2}(D)} = \frac{2-n}{2} \norm{\nabla\tilde{u}}_{L^{2}(D)}^{2} 
\end{equation*}
because $\tilde{u}=\abs{\nabla \tilde{u}}=0$ on $\partial D$. By testing \eqref{eq:global-minimizer-2phase-1} using $x\cdot\nabla\tilde{u}$ over $D$, we see that
\begin{equation}
\begin{aligned}
& - n \int_{D}\tilde{u} \, \rmd x = \int_{\partial D} (x\cdot\nu) \tilde{u} \, \rmd S(x) - n \int_{D}\tilde{u} \, \rmd x = \int_{D} x\cdot\nabla \tilde{u} \, \rmd x \\
& \quad = \int_{D} (\Delta\tilde{u}+k^{2}\tilde{u})(x\cdot\nabla\tilde{u}) \, \rmd x = - (\nabla \tilde{u},\nabla(x\cdot\nabla \tilde{u}))_{L^{2}(D)} + k^{2} (\tilde{u} , x\cdot\nabla \tilde{u})_{L^{2}(D)} \\
& \quad = \frac{n-2}{2}\norm{\nabla\tilde{u}}_{L^{2}(D)}^{2} - \frac{k^{2}n}{2}\norm{\tilde{u}}_{L^{2}(D)}^{2},
\end{aligned} \label{eq:Pompeiu1}
\end{equation}
On the other hand, we integrate \eqref{eq:global-minimizer-2phase-1} over $D$ to obtain 
\begin{equation}
\abs{D} = \int_{D} \Delta \tilde{u} \, \rmd x + k^{2} \int_{D} \tilde{u} \, \rmd x = \int_{\partial D} \nu \cdot \nabla \tilde{u} \, \rmd S(x) + k^{2}\int_{D}\tilde{u} \, \rmd x = k^{2}\int_{D}\tilde{u} \, \rmd x. \label{eq:Pompeiu2}
\end{equation}
We combine \eqref{eq:Pompeiu1} and \eqref{eq:Pompeiu2} to obtain the energy estimate 
\begin{equation}
-\frac{n}{k^{2}}\abs{D} = \frac{n-2}{2}\norm{\nabla\tilde{u}}_{L^{2}(D)}^{2} - \frac{k^{2}n}{2}\norm{\tilde{u}}_{L^{2}(D)}^{2}. \label{eq:energy-estimate-Pompeiu}
\end{equation}
We also test \eqref{eq:global-minimizer-2phase-1} by $\tilde{u}$, and using \eqref{eq:Pompeiu2} to obtain 
\begin{equation}
\begin{aligned}
\frac{1}{k^{2}}\abs{D} = \int_{D} \tilde{u} \, \rmd x = \int_{D}(\Delta\tilde{u}+k^{2}\tilde{u})\tilde{u} \, \rmd x = - \norm{\nabla\tilde{u}}_{L^{2}(D)}^{2} + k^{2}\norm{\tilde{u}}_{L^{2}(D)}^{2} 
\end{aligned}\label{eq:energy-estimate-Pompeiu-2}
\end{equation}
Solving \eqref{eq:energy-estimate-Pompeiu} and \eqref{eq:energy-estimate-Pompeiu-2}, we reach \eqref{eq:energy-estimate-Pompeiu-3}, that is, 
\begin{equation*}
\frac{n+2}{2}\abs{D} = k^{4}\norm{\tilde{u}}_{L^{2}(D)}^{2} ,\quad \frac{n}{2}\abs{D} = k^{2}\norm{\nabla\tilde{u}}_{L^{2}(D)}^{2}. 
\end{equation*}
For each $t\in\mR$, from \eqref{eq:Pompeiu2} and \eqref{eq:energy-estimate-Pompeiu-3} we see that 
\begin{equation*}
\tilde{\mJ}_{k,k}(t\tilde{u}) = t^{2}\left( \norm{\nabla\tilde{u}}_{L^{2}(D)}^{2} - k^{2} \norm{\tilde{u}}_{L^{2}(D)}^{2} \right) + 2t\int_{D}\tilde{u} \, \rmd x = (-t^{2}+2t)\frac{\abs{D}}{k^{2}}
\end{equation*}
for all $R>0$ with $\overline{D} \subset B_{R}$. It is clear that 
\begin{equation}
\lim_{t\rightarrow\pm\infty}\tilde{\mJ}_{k,k}(t\tilde{u}) = -\infty ,\quad \max_{t\in\mR}\tilde{\mJ}_{k,k}(t\tilde{u}) = \tilde{\mJ}_{k,k}(\tilde{u}) = \frac{\abs{D}}{k^{2}}, \label{eq:unbounded-below}
\end{equation}
which shows that $\tilde{\mJ}_{k,k}$ is unbounded below in $H_{0}^{1}(B_{R})$ and $\tilde{u}$ is not a local minimum. We now fix an eigenfunction $\phi\in H_{0}^{1}(B_{R})$ with $-\Delta\phi = k_{0}^{2} \phi$ with $k_{0} > k$. We see that 
\begin{equation*}
\begin{aligned}
&\tilde{\mJ}_{k,k}(\tilde{u}+t\phi) = \tilde{\mJ}_{k,k}(\tilde{u}) + 2t\int_{D} \nabla\tilde{u}\cdot\nabla\phi\,\rmd x + t^{2}\norm{\nabla\phi}_{L^{2}(D)}^{2} \\
& \qquad - 2k^{2}t\int_{D} \tilde{u}\phi\,\rmd x - t^{2}k^{2}\norm{\phi}_{L^{2}(D)}^{2} + 2t\int_{D}\phi\,\rmd x \\
& \quad = \tilde{\mJ}_{k,k}(\tilde{u}) + 2t(k_{0}^{2}-k^{2})\int_{D} \tilde{u}\phi\,\rmd x + t^{2}(k_{1}^{2}-k^{2})\norm{\phi}_{L^{2}(D)}^{2} + 2t\int_{D}\phi\,\rmd x \\
& \quad \ge \tilde{\mJ}_{k,k}(\tilde{u}) + 2t \left( (k_{0}^{2}-k^{2})\int_{D} \tilde{u}\phi\,\rmd x + \int_{D}\phi\,\rmd x \right).
\end{aligned}
\end{equation*}
If $(k_{0}^{2}-k^{2})\int_{D} \tilde{u}\phi\,\rmd x \ge \int_{D}\phi \, \rmd x$, then we see that 
\begin{equation*}
\tilde{\mJ}_{k,k}(\tilde{u}+t\phi) \ge \tilde{\mJ}_{k,k}(\tilde{u}) \quad \text{for all $t \ge 0$;}
\end{equation*}
Otherwise, if $(k_{0}^{2}-k^{2})\int_{D} \tilde{u}\phi\,\rmd x \le \int_{D}\phi \, \rmd x$ then we see that 
\begin{equation*}
\tilde{\mJ}_{k,k}(\tilde{u}+t\phi) \ge \tilde{\mJ}_{k,k}(\tilde{u}) \quad \text{for all $t \le 0$.}
\end{equation*}
In either case, we see that $\tilde{u}$ is not a local maximum. 
\end{proof}

\begin{exa}[A refinement of {\cite[Example~2.3]{KLSS22QuadratureDomain}}] \label{exa:2-dimensional-case}
For each $m=1,2,3,\cdots$, we now show that $B_{k^{-1}j_{\frac{n}{2},m}}$ is a null $k$-quadrature domain. We consider the function $x\mapsto \abs{x}^{\frac{2-n}{2}}J_{\frac{n-2}{2}}(k\abs{x})$ that solves $(\Delta + k^{2})u=0$ in $\mR^{n}$. By using the fact 
\begin{equation*}
(t^{\frac{2-n}{2}}J_{\frac{n-2}{2}}(t))'=-t^{\frac{2-n}{2}}J_{\frac{n}{2}}(t),
\end{equation*}
one can easily see that $\{k^{-1}j_{\frac{n}{2},m}: \text{$m$ is odd}\}$ are all positive local minima with $J_{\frac{n-2}{2}}(j_{\frac{n}{2},m})<0$, while $\{k^{-1}j_{\frac{n}{2},m}: \text{$m$ is even}\}$ are all positive local maxima with $J_{\frac{n-2}{2}}(j_{\frac{n}{2},m})>0$. For each $m\in\mN$, we define 
\begin{equation*}
\tilde{u}_{m}(x) := \left\{\begin{aligned}
& \frac{(k^{-1}j_{\frac{n}{2},m})^{\frac{2-n}{2}}J_{\frac{n-2}{2}}(j_{\frac{n}{2},m})-\abs{x}^{\frac{2-n}{2}}J_{\frac{n-2}{2}}(k\abs{x})}{k^{2}(k^{-1}j_{\frac{n}{2},m})^{\frac{2-n}{2}} J_{\frac{n-2}{2}}(j_{\frac{n}{2},m})} && \text{for all $\abs{x}<k^{-1}j_{\frac{n}{2},m}$,} \\
& 0 && \text{otherwise.}
\end{aligned}\right.
\end{equation*}
We see that $\chi_{\{\tilde{u}_{m}>0\} \cup \{\tilde{u}_{m}<0\}} = \chi_{B_{k^{-1}j_{\frac{n}{2},m}}}$ and $\tilde{u}_{m}\in C^{1,1}(\mR^{n})$ satisfy \eqref{eq:global-minimizer-2phase-1}. It is interesting to mention for the case $m=1$ that $\tilde{u}_{1} \ge 0$ in $\mR^{n}$. 
\end{exa}

Recall that the assumption in the \href{https://www.scilag.net/problem/G-180522.1}{Pompeiu problem} \cite{Pom29PompeiuProblem} is equivalent to the existence of a function $\tilde{u}$ solving the two-phase problem \eqref{eq:global-minimizer-2phase-1} for some $k > 0$, as demonstrated in \cite{Wil76PompeiuProblem, Wil81PompeiuProblem}. It's worth mentioning that \cite{Wil81PompeiuProblem} guarantees that if $D$ has a Lipschitz boundary $\partial D$ which is homeomorphic to the unit sphere in $\mR^{n}$ and it satisfies the assumption in the Pompeiu problem, then the boundary of such $D$ must be analytic. However, the unanswered question, posed in \cite[Problem~80]{Yau83OpenProblems}, is whether $D$, a bounded Lipschitz domain homeomorphic to a ball and satisfying \eqref{eq:global-minimizer-2phase-1}, must be a ball or not.

Partial results exist \cite{Aviles86Pompeiu, BK82Pompeiu, BST73Pompeiu, GS93Pompeiu} as partial answers to this question. In \cite{KLSS22QuadratureDomain}, it is observed that such a domain $D$ is also a $k$-quadrature domain. Therefore, using the maximum principle along with the positivity of the first Dirichlet eigenfunction of $-\Delta$, it is necessary that $k > k_{*}(D)$. This problem is challenging from the following perspective:

\begin{itemize}
\item By using Theorem~\ref{thm:saddle-point}, one sees that nontrivial local minima (if they exist) of the functional $\tilde{\mJ}_{k,k}$ in $H_{0}^{1}(B_{R})$ never satisfy \eqref{eq:global-minimizer-2phase-1}. We do not see how to study the symmetry of null $k$-quadrature domain by directly using the ideas in \cite[Corollary~1]{AS16MultiPhaseQD}.
\item The lack of positivity of solution to the Pompeiu problem, is also an obstacle for using the moving plane technique. 
\item It is easy to see that $k>0$ is also a Neumann eigenvalue of $D$ with eigenfunction $\tilde{v}=\tilde{u}-k^{-2}$, which satisfies $\left.\tilde{v}\right|_{\partial D}=-k^{-2}$. One also can see e.g. \cite{GN13LaplaceEigenfunctions} for isoperimetric inequality for (Dirichlet, Neumann or Robin) eigenvalues. The main difficulty is the knowledge of $\left.\tilde{v}\right|_{\partial D}$ does not explicitly contained in the Courant minimax characterization of Neumann eigenvalues. Therefore we also believe that the Courant minimax principle is not helpful in the study of the Pompeiu problem.
\end{itemize}

\section*{Declarations}

\noindent {\bf  Data availability statement:} All data needed are contained in the manuscript.

\medskip
\noindent {\bf  Funding and/or Conflicts of interests/Competing interests:} The authors declare that there are no financial, competing or conflict of interests.

\end{sloppypar}

\end{document}